\definecolor{teal}{rgb}{0.0, 0.5, 0.5}
\definecolor{tealblue}{rgb}{0.21, 0.46, 0.53}
\definecolor{tealgreen}{rgb}{0.0, 0.51, 0.5}
\definecolor{tuscanred}{rgb}{0.51, 0.21, 0.21}
\definecolor{sangria}{rgb}{0.57, 0.0, 0.04}
\definecolor{rufous}{rgb}{0.66, 0.11, 0.03}
\definecolor{pinegreen}{rgb}{0.0, 0.47, 0.44}
\definecolor{darkscarlet}{rgb}{0.34, 0.01, 0.1}
\definecolor{darkseagreen}{rgb}{0.56, 0.74, 0.56}
\definecolor{darkpastelred}{rgb}{0.76, 0.23, 0.13}
\definecolor{darkpink}{rgb}{0.91, 0.33, 0.5}
\definecolor{darkpastelblue}{rgb}{0.47, 0.62, 0.8}
\definecolor{alizarin}{rgb}{0.82, 0.1, 0.26}
\definecolor{candyapplered}{rgb}{1.0, 0.03, 0.0}
\newcommand{\harxiv}[1]{ \href{http://arxiv.org/abs/#1}{\texttt{arXiv:#1}}}
\newcommand{\hyref}[2]{ \hyperref[#2]{#1~\ref*{#2}} }
\newcommand{\Canakci}{\c{C}anak\c{c}\i}
\renewcommand{\comment}[1]{{}}
\theoremstyle{plain}
\newtheorem{theorem}{Theorem}[section]
\newtheorem{lemma}[theorem]{Lemma}
\newtheorem{corollary}[theorem]{Corollary}
\newtheorem{proposition}[theorem]{Proposition}
\newtheorem*{introtheorem}{Theorem}
\theoremstyle{definition}
\newtheorem{remark}[theorem]{Remark}
\newtheorem{example}[theorem]{Example}
\newtheorem{definition}[theorem]{Definition}
\newtheorem{definitions}[theorem]{Definitions}
\newtheorem*{convention}{Convention}
\newtheorem*{introremark}{Remark}
\newcommand{\sD}{\mathsf{D}}
\newcommand{\sK}{\mathsf{K}}
\DeclareMathAlphabet{\mathpzc}{OT1}{pzc}{m}{it}
\newcommand{\Db}{\sD^b}
\newcommand{\KminusL}{\sK^{b,-}(\proj{\Lambda})}
\newcommand{\KbL}{\sK^b(\proj{\Lambda})}
\newcommand{\kk}{{\mathbf{k}}}
\renewcommand{\emptyset}{\varnothing}
\DeclareMathOperator{\Hom}{\mathrm{Hom}}
\DeclareMathOperator{\Ext}{\mathrm{Ext}}
\newcommand{\proj}[1]{\mathsf{proj}(#1)}
\newcommand{\length}[1]{\mathrm{len}(#1)}
\newcommand{\B}{B^{\bullet}}
\renewcommand{\P}{P^{\bullet}}
\newcommand{\Q}{Q^{\bullet}}
\newcommand{\M}{M^{\bullet}}
\newcommand{\E}{E^{\bullet}}
\newcommand{\f}{f^{\bullet}}
\newcommand{\g}{g^{\bullet}}
\renewcommand{\i}{i^{\bullet}}
\newcommand{\id}{\mathsf{id}^{\bullet}}
\newcommand{\xydot}{{\bullet}}
\newcommand{\arr}{\ar@{-}[r]}
\newenvironment{pmat}{\left[ \begin{smallmatrix}}{\end{smallmatrix} \right]}
\renewcommand{\phi}{\varphi}
\renewcommand{\epsilon}{\varepsilon}
\begin{document}

\title[Mapping cones]{Mapping cones in the bounded derived category of a gentle algebra}

\author{\.{I}lke \Canakci} 
\address{School of Mathematics, Statistics and Physics, Newcastle University, Newcastle Upon Tyne, NE1 7RU, United Kingdom.}
\email{ilke.canakci@ncl.ac.uk}

\author{David Pauksztello} 
\address{Department of Mathematics and Statistics, Lancaster University, Lancaster, LA1 4YF, United Kingdom}
\email{d.pauksztello@lancaster.ac.uk}

\author{Sibylle Schroll}
\address{Department of Mathematics, University of Leicester, University Road, Leicester, LE1 7RH, United Kingdom.}
\email{schroll@leicester.ac.uk}

\keywords{bounded derived category, gentle algebra, homotopy string and band, string combinatorics, mapping cone
\newline This work was supported through EPSRC grants EP/K026364/1, EP/K022490/1, EP/N005457/1 and EP/P016294/1. 
}

\subjclass[2010]{18E30, 16G10, 05E10}

\begin{abstract}
In this article we  describe the triangulated structure of the bounded derived category of a gentle algebra by describing the triangles induced by the morphisms between indecomposable objects in a basis of their Hom-space.
\end{abstract}

\maketitle

\vspace{-0.5cm}

\section*{Introduction}

Derived categories provide a common framework for homological algebra in subjects such as  algebra, geometry and mathematical physics. For example, in mathematical physics, in the context of homological mirror symmetry,  they are  the natural setting for Bridgeland's stability conditions \cite{Bridgeland}. In algebraic geometry, they arise in the study of non-commutative crepant resolutions which are often studied via an algebra whose derived category is  equivalent  to the derived category of the smooth variety resolving the singularity \cite{vdB}.  
In representation theory, derived categories are the natural setting for tilting theory of finite dimensional algebras, see for example \cite{Happel}.  
Thus understanding the structure of derived categories and their properties is an important problem.
However, owing to their complexity, in general, this is difficult to achieve. Therefore in the cases, where this is achievable,  it is of great value to obtain as much detailed  knowledge  of the derived category as possible.

In the context of  the representation theory of finite dimensional algebras, we will now describe a situation where it is possible to gain insight into the structure of derived categories. According to the tame-wild dichotomy \cite{Drozd},  algebras are either  of tame representation type,  that is, all indecomposable finite-dimensional modules may be classified, or they are of wild representation type and it is considered that a complete classification is impossible. 
 Therefore much of the work in representation theory has been focused on tame algebras and for particular classes of tame algebras we have a good understanding of their (classical) representation theory, that is of their modules categories. 
  A good example of this are special biserial algebras \cite{WW}, a prominent class of tame algebras, that has been widely studied,  with many exciting recent developments, see for example \cite{ AG, AmLP, AKE, Carroll, Cibils, CSP,  Erdmann, GM,  Schroll, Xu}.  One of the reasons that special biserial algebras are so well-understood is that their indecomposable representations are classified in terms of strings and bands \cite{GP, Ringel, WW}. 
 
The notion of derived-tameness was introduced in \cite{GK}. In the case of derived-tame algebras we can gain concrete insight into the structure of the derived category.
Gentle algebras are derived tame \cite{BM}. They form a subclass of special biserial algebras which is closed under derived equivalence \cite{Schroer-Zimmermann}. They are, therefore, a natural class of algebras whose derived category is the object of intensive study. As a result, our understanding of the structure of derived categories of gentle algebras parallels that of our understanding of their module categories. 

Gentle algebras first arose in the setting of tilting theory in the classification of iterated tilted algebras of type $A$ and type $\widetilde{A}$ in \cite{Assem} and \cite{AS} respectively.  They now play an important role in many areas of mathematics: in algebra, they occur in cluster theory as Jacobian algebras associated to surface triangulations \cite{ABCP, FST, Labardini},  and in recent advances in invariant theory \cite{Carroll-Chindris}. 
In addition to their widespread appearance in representation theory and algebra, gentle algebras also are increasingly ubiquituous in geometric contexts. For example, their singularity category \cite{Buchweitz} -- which measures how far an algebra or variety is away from being nonsingular -- was described in \cite{Kalck}. They feature prominently in the programme to understand singularities of nodal curves \cite{BD,BD2,BD3} and  in an algebraic approach to mirror symmetry \cite{Bocklandt, HKK, Lekili-Polishchuk, Opper-Plamondon-Schroll}.

From now on let $\Lambda$ be a gentle algebra and $\Db(\Lambda)$ be its bounded derived category with shift functor $\Sigma$. The indecomposable objects in $\Db(\Lambda)$ have been classified \cite{BM} in terms of string combinatorics: namely they are given in terms of \emph{homotopy strings} and \emph{homotopy bands}, the terminology originating in \cite{Bo}; see also \cite{BD} for a similar approach in the context of nodal algebras.  
The corresponding indecomposable complexes are called \emph{string complexes} and \emph{band complexes}, respectively.

Using the Happel functor \cite{Happel}, the Auslander--Reiten (AR) structure of the perfect  category $\KbL$ was determined in \cite{Bo}.
See also \cite[\S 6]{ALP} for similar results without the use of the Happel functor. A canonical basis for the morphisms between string complexes and (one-dimensional) band complexes in $\Db(\Lambda)$ was given  in \cite{ALP} in terms of three types of morphism: \emph{graph maps}, \emph{single maps} and \emph{double maps}. 
We will call this the \emph{standard basis}.

A general description of the middle terms of triangles of the form $\Q \to \E \to \P \to \Sigma \Q$, where $\P$ and $\Q$ are indecomposable objects, is not known. In this article, using (homotopy) string combinatorics, we describe the mapping cones of elements of the standard basis for the morphisms between string complexes and (one-dimensional) band complexes. As such, we explicitly describe the middle terms of extensions for a standard basis of $\Ext^1_{\Db(\Lambda)}(\P,\Q) := Hom^1_{\Db(\Lambda)}(\P, \Sigma\Q)$ where $\P$ and $\Q$ are string or band complexes. Understanding middle terms of extensions is important for many applications, 
for example:  in cluster theory, these form the basis of the so-called exchange triangles and exchange relations \cite{BMRRT,CCS,CS,IT,IY}; 
understanding middle terms of extensions is a key ingredient in classifying torsion pairs in triangulated categories \cite{Coelho-P,GHJ,Ng,Qiu-Zhou,Zhang-Zhou-Zhu}; 
the description of middle terms of extensions in the derived category of derived-discrete algebras is instrumental in the classification of thick subcategories of discrete derived categories \cite{Broomhead} and  they have been used in \cite{CPS} to show that the short exact sequences given in \cite{Schroer} form a  basis of the $\Ext^1$-space between any two indecomposable $\Lambda$-modules, thus answering this longstanding open question.
Moreover, the results of this paper have been applied in \cite{Opper-Plamondon-Schroll} to give a geometric interpretation of mapping cones in terms of resolutions of crossing curves in the context of a geometric model of the derived category of a gentle algebra.

In $\Db(\Lambda)$ the middle term $\E$ of an extension  $\Q \to \E \to \P \stackrel{\f}{\to} \Sigma \Q$ is given by the inverse shift of the mapping cone $\M_{\f}$ of the map $\f$.
In this paper, we describe the indecomposable summands of the mapping cones  of the standard basis elements, that is of maps $\f$ in the canonical basis of $\Hom_{\Db(\Lambda)}(\P,\Q)$, where $\P$ and $\Q$ are  indecomposable complexes. Our description is  general in that our results cover both string and band complexes. Our main results can be summarised in the following 
graphical description  of the mapping cones of  standard basis elements:
 
\begin{introtheorem}
Let $\Lambda$ be a gentle algebra. Suppose $\Q_\sigma$ and $\Q_\tau$ are indecomposable objects in $\Db(\Lambda)$, where  $\sigma$ and $\tau$ are the corresponding homotopy strings or bands. Let $\f \in \Hom_{\Db(\Lambda)}(\Q_\sigma, \Q_\tau)$ be a standard basis element. Then the indecomposable summands of the mapping cone $\M_{\f}$ are given by the homotopy strings and bands occurring in the red (dotted) and green (dashed) boxes resulting from
the following graphical calculus.
 
 \begin{enumerate}
 \item Let $\sigma = \beta \sigma_L \rho \sigma_R \alpha$ and $\tau = \delta \tau_L \rho \tau_R \gamma$ and suppose $\f$ is a graph map corresponding to the overlap $\rho$. Then $\M_{\f}$ is given by:
\[
\scalebox{.9}{\begin{tikzpicture}[scale=1.5]
\node (A0) at (-1,0) {};
\node[scale=.7] (A1) at (0,0) {$\bullet $};
\node[scale=.7] (A2) at (1,0) {$\bullet $};
\node[scale=.7] (A3) at (2,0) {$\bullet $};
\node[scale=.7] (A4) at (3,0) {$\bullet $};
\node[scale=.7] (A5) at (4,0) {$\bullet $};
\node[scale=.7] (A6) at (5,0) {$\bullet $};
\node[scale=.7] (A7) at (6,0) {};
\node[scale=.7] (B0) at (-1,-1) {};
\node[scale=.7] (B1) at (0,-1) {$\bullet $};
\node[scale=.7] (B2) at (1,-1) {$\bullet $};
\node[scale=.7] (B3) at (2,-1) {$\bullet $};
\node[scale=.7] (B4) at (3,-1) {$\bullet $};
\node[scale=.7] (B5) at (4,-1) {$\bullet $};
\node[scale=.7] (B6) at (5,-1) {$\bullet $};
\node[scale=.7] (B7) at (6,-1) {};
\path[color=white] (A0) edge node[above,color=black,scale=.7]{$\beta $}(A1)
 (B0) edge node[below,color=black,scale=.7]{$\delta $}(B1)
 (A6) edge node[above,color=black,scale=.7]{$\alpha $}(A7)
 (B6) edge node[below,color=black,scale=.7]{$\gamma $}(B7);
\draw [line join=round,
decorate, decoration={
    zigzag,
    segment length=4,
    amplitude=.9,post=lineto,
    post length=2pt
}] (A0) --  (A1)
(B0)--(B1)
(A6)--(A7)
(B6)--(B7);
\path
(A1) edge node[above,scale=.7]{$\sigma _{L}$} (A2)
(A2) edge node[above,scale=.7]{$\rho _{k}$} (A3)
(A4) edge node[above,scale=.7]{$\rho _{1}$} (A5)
(A5) edge node[above,scale=.7]{$\sigma _{R}$} (A6);
\path[->,font=\scriptsize ,>=angle 90]
(A1) edge node[left]{$f_{L}$} (B1)
(A6) edge node[right]{$f_{R}$} (B6);
\path[font=\scriptsize ,>=angle 90]
(A2)  edge[double]  (B2)
(A3)  edge[double]  (B3)
(A4)  edge[double]  (B4)
(A5)  edge[double]  (B5);
\draw [densely dotted] (A3)--(A4) (B3)--(B4);
\path
(B1) edge node[below,scale=.7]{$\tau _{L}$} (B2)
(B2) edge node[below,scale=.7]{$\rho _{k}$} (B3)
(B4) edge node[below,scale=.7]{$\rho _{1}$} (B5)
(B5) edge node[below,scale=.7]{$\tau _{R}$} (B6);
\draw [color=red, dotted, line width=1.2] (-1,.07)--(1.07,.07)--(1.07,-1.07)--(-1,-1.07)--(-1, -.93)--(.9,-.93)--(.9,-.07)--(-1,-.07)--(-1,.07);
\draw [color=green, dashed, line width=1.1] (3.93,.07)--(6,.07)--(6,-.07)--(4.07,-.07)--(4.07, -.93)--(6,-.93)--(6,-1.07)--(3.93,-1.07)--(3.93,.07);
\end{tikzpicture}}
\]
\item Let $\sigma= \beta \sigma_L \sigma_R \alpha$ and $\tau = \delta \tau_L \tau_R \gamma$ and suppose $\f$ is a single map. Then $\M_{\f}$ is given by: 
\[
\scalebox{.9}{\begin{tikzpicture}[scale=1.5]
\node (A0) at (-1,0) {};
\node[scale=.7] (A1) at (0,0) {$\bullet $};
\node[scale=.7] (A2) at (1,0) {$\bullet $};
\node[scale=.7] (A3) at (2.5,0) {$\bullet $};
\node[scale=.7] (A4) at (3.5,0) {};
\node[scale=.7] (B0) at (-1,-1) {};
\node[scale=.7] (B1) at (0,-1) {$\bullet $};
\node[scale=.7] (B2) at (1,-1) {$\bullet $};
\node[scale=.7] (B3) at (2.5,-1) {$\bullet $};
\node[scale=.7] (B4) at (3.5,-1) {};
\path[color=white]
 (A0) edge node[above,color=black,scale=.7]{$\beta $}(A1)
 (A3) edge node[below,color=black,scale=.7]{$\alpha $}(A4)
 (B0) edge node[below,color=black,scale=.7]{$\delta $}(B1)
 (B3) edge node[below,color=black,scale=.7]{$\gamma $}(B4);
\draw [line join=round,
decorate, decoration={
    zigzag,
    segment length=4,
    amplitude=.9,post=lineto,
    post length=2pt
}] (A0) --  (A1)
(A3) -- (A4)
(B0)--(B1)
(B3)--(B4);
\path (A1) edge node[above,scale=.7]{$\sigma _{L}$} (A2);
\path[->,font=\scriptsize ,>=angle 90]
(A2) edge node[left]{$f$} (B2)
(A2) edge node[above]{$\sigma _{R}=ff_{R}$} (A3)
(B3) edge node[above]{$\tau _{R}=\bar f \bar f_{L}$} (B2);
\path (B1) edge node[below,scale=.7]{$\tau _{L}$} (B2);
\draw [color=red, dotted, line width=1.2] (-1,.07)--(1.07,.07)--(1.07,-1.07)--(-1,-1.07)--(-1, -.93)--(.93,-.93)--(.93,-.07)--(-1,-.07)--(-1,.07);
\draw [color=green, dashed, line width=1.1] (2.57,-.6)--(2.57,-.07)--(3.5,-.07)--(3.5,.07)--(2.43,.07)--(2.43,-1.07) (2.57,-.6)--(2.57,-.93)--(3.5,-.93)--(3.5,-1.07)--(2.43,-1.07);
\path[->,font=\scriptsize ,>=angle 90,color=green,scale=.7]
(B3) edge node[right]{$f_{L}ff_{R}$} (A3);
\end{tikzpicture}}
\]

\item Let $\sigma = \beta \sigma_L \sigma_C \sigma_R \alpha$ and $\tau = \delta \tau_L \tau_C \tau_R \gamma$ and suppose $\f$ is a double map. Then $\M_{\f}$ is given by:

\[
\scalebox{.9}{\begin{tikzpicture}[scale=1.5]
\node (A0) at (-1,0) {};
\node[scale=.7] (A1) at (0,0) {$\bullet $};
\node[scale=.7] (A2) at (1,0) {$\bullet $};
\node[scale=.7] (A3) at (2,0) {$\bullet $};
\node[scale=.7] (A4) at (3,0) {$\bullet $};
\node[scale=.7] (A5) at (4,0) {};
\node[scale=.7] (B0) at (-1,-1) {};
\node[scale=.7] (B1) at (0,-1) {$\bullet $};
\node[scale=.7] (B2) at (1,-1) {$\bullet $};
\node[scale=.7] (B3) at (2,-1) {$\bullet $};
\node[scale=.7] (B4) at (3,-1) {$\bullet $};
\node[scale=.7] (A5) at (4,0) {};
\path[color=white] (A0) edge node[above,color=black,scale=.7]{$\beta $}(A1)
 (B0) edge node[below,color=black,scale=.7]{$\delta $}(B1)
 (A4) edge node[above,color=black,scale=.7]{$\alpha $}(A5)
 (B4) edge node[below,color=black,scale=.7]{$\gamma $}(B5);
\draw [line join=round,
decorate, decoration={
    zigzag,
    segment length=4,
    amplitude=.9,post=lineto,
    post length=2pt
}] (A0) --  (A1)
(B0)--(B1)
(A4)--(A5)
(B4)--(B5);
\path (A1) edge node[above,scale=.7]{$\sigma _{L}$} (A2);
\draw (A3) edge node[above,scale=.7]{$\sigma _{R}$} (A4);
\draw (B3) edge node[below,scale=.7]{$\tau _{R}$} (B4);
\path[->,font=\scriptsize ,>=angle 90]
(A2) edge node[left]{$f_{L}$} (B2)
(A3) edge node[right]{$f_{R}$} (B3)
(A2) edge node[above,scale=.7]{$\sigma _{C}$} (A3)
(B2) edge node[below,scale=.7]{$\tau _{C}$} (B3);
\path (B1) edge node[below,scale=.7]{$\tau _{L}$} (B2);
\draw [color=red, dotted, line width=1.2] (-1,.07)--(1.07,.07)--(1.07,-1.07)--(-1,-1.07)--(-1, -.93)--(.93,-.93)--(.93,-.07)--(-1,-.07)--(-1,.07);
\draw [color=green, dashed, line width=1.1] (1.93,.07)--(4,.07)--(4,-.07)--(2.07,-.07)--(2.07, -.93)--(4,-.93)--(4,-1.07)--(1.93,-1.07)--(1.93,.07);
\end{tikzpicture}}
\]

\end{enumerate}
\end{introtheorem}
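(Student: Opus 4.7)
The plan is to treat the three cases — graph map, single map, and double map — in turn; in each case the strategy is the same. Write $\f$ as an explicit chain map between the complexes $\Q_\sigma$ and $\Q_\tau$ using the combinatorial description from \cite{ALP,BM}, form the mapping cone $\M_\f$ as the usual twisted complex built from $\Sigma\Q_\sigma$ and $\Q_\tau$, and then simplify $\M_\f$ by successive cancellation of contractible direct summands until one can read off the indecomposable pieces corresponding to the red and green boxes in the diagrams.

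The technical engine driving the simplification is the standard cancellation lemma (Gaussian elimination) for bounded complexes of projectives: whenever a component of the differential is an isomorphism between a summand in one degree and a summand in the next, those two summands may be discarded together with the isomorphism without changing the isomorphism class in $\KbL$, and hence in $\Db(\Lambda)$. In each of the three cases, the morphism $\f$ has identity components along a distinguished substring of $\sigma$ and $\tau$ — the overlap $\rho$ for a graph map, the overlap vertex for a single map, and the central overlap $\sigma_C=\tau_C$ for a double map — and these identity components, drawn as double lines in the diagrams, are exactly the contractible pieces to be cancelled. After elimination, the surviving differentials on the left-hand side concatenate $\beta\sigma_L$ with $\delta\tau_L$ into a new homotopy string or band (the red box), while those on the right concatenate $\sigma_R\alpha$ with $\tau_R\gamma$ into another (the green box). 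In the single-map case, the new arrow $f_L f f_R$ visible in the green box arises as a composition of the surviving components of $f$ with the adjacent differentials of $\M_\f$.

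The remaining task in each case is to verify that the resulting walks in the Gabriel quiver satisfy the homotopy string/band axioms. This is a direct combinatorial check using the defining properties of gentle algebras — at most two arrows incident to each vertex, monomial square-zero relations, unique continuation — and is precisely where the standard basis hypothesis on $\f$ enters: the endpoint conditions built into the definitions of graph, single and double maps are exactly what is needed to guarantee that the concatenations produce valid homotopy strings and bands.

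The main obstacle will be the bookkeeping. Each of the three cases subdivides into many subcases according to whether the tails $\beta\sigma_L,\delta\tau_L,\sigma_R\alpha,\tau_R\gamma$ are empty or nonempty, whether their boundary letters are direct or inverse arrows, and whether $\sigma$ or $\tau$ is a string or a band; the band case requires particular care because the cyclic structure can interact with cancellation, for example when the overlap wraps all the way around the band. Sign conventions for the cone differential and the orientation conventions for homotopy strings and bands must be tracked consistently throughout. Modulo this case analysis, however, each individual computation reduces to a direct application of the cancellation lemma to an explicitly written chain map.
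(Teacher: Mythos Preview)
Your plan works for graph maps and matches the paper's approach there: the paper's Lemma~\ref{lem:homotopy} is exactly the Gaussian elimination you describe, and cancelling the identity components along the overlap $\rho$ is precisely how Theorem~\ref{thm:graph-maps} is proved.

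The plan fails, however, for single and double maps. You write that ``the morphism $\f$ has identity components along a distinguished substring \ldots\ the overlap vertex for a single map, and the central overlap $\sigma_C=\tau_C$ for a double map'' and that ``these identity components, drawn as double lines in the diagrams, are exactly the contractible pieces to be cancelled''. But look again at the diagrams in parts (2) and (3) of the statement: there are no double lines. For a single map the only component of $\f$ is a \emph{nontrivial} path $f$; for a double map the components $f_L,f_R$ are nontrivial paths and in general $\sigma_C\neq\tau_C$ (one only has $f_L\tau_C=\sigma_C f_R$). Consequently the differential of $\M_\f$ has no isomorphism entries to cancel, and Gaussian elimination as you describe it does not apply. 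The paper handles these cases by a genuinely different argument: it writes down explicit chain maps $\i_1\colon\P_{c_1}\to\M_\f$ and $\i_2\colon\P_{c_2}\to\M_\f$ whose degreewise matrices are unipotent triangular (entries such as $\begin{pmat}1&0\\-f_L&1\end{pmat}$ and $\begin{pmat}-f_R&1\\1&0\end{pmat}$), hence invertible, and checks commutativity directly. The ``new arrow $f_Lff_R$'' in the green box does not arise from cancellation but is simply a differential of the string complex $\P_{c_2}$, matched to the cone differential via these triangular change-of-basis matrices.
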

 
We refer the reader to Sections~\ref{sec:graph-maps}, \ref{sec:single-maps} and \ref{sec:double-maps} for precise statements and details.

\begin{convention}
Throughout this article, all modules will be  left modules and all maps will be composed from left to right.
\end{convention}

\begin{introremark}
The statement of the theorem above contains two oversights in the case that a band complex is involved: for a (quasi-)graph map the overlap $\rho$ may be longer than the band, and when the result of the mapping cone calculus is a cyclic string, it may be a power of a band. These oversights are corrected in \cite{Addendum}. We have added footnotes to the affected statements indicating where one can find the corrected statements.
\end{introremark}

\section{Background}

\subsection{The homotopy category and mapping cones}

The required background on derived and triangulated categories in the setting of representation theory can be found in \cite{Happel}.

Let $\Lambda$ be a finite-dimensional algebra over an algebraically closed field $\kk$. The category of interest in this article will be $\KminusL$, the homotopy category of right bounded complexes of projective left $\Lambda$-modules with bounded cohomology. The morphisms of $\KminusL$ are cochain maps of complexes up to homotopy. 
It is well known that $\KminusL \simeq \Db(\Lambda)$, the bounded derived category of finitely generated left $\Lambda$-modules and for convenience we shall always identify $\Db(\Lambda)$ with $ \KminusL$. 

\begin{definition}
Let $(\P, d_P)$ and $(\Q,d_Q)$ be complexes in $\KminusL$ and suppose $\f \colon \P \to \Q$ is a cochain map. The \emph{mapping cone of $\f$} is the complex $(\M_{\f},d_{M_{\f}})$ given by
\[
M_f^n = P^{n+1} \oplus Q^n
\quad \text{and} \quad
d^n_{M_{\f}} =
\left[
\begin{array}{cc}
-d^{n+1}_P & f^{n+1} \\
0                 & d^n_Q
\end{array}
\right]
\]
\end{definition}

\subsection{Gentle algebras}

The definition of a gentle algebra goes back to \cite{AS}, where they first occurred as iterated tilted algebras of type $A$.

\begin{definition}
A finite-dimensional $\kk$-algebra is called \emph{gentle} if it is Morita equivalent to an algebra $\kk Q/I$ where $Q$ is a quiver and $I$ an admissible ideal in $\kk Q$ such that 
\begin{enumerate}
\item For each vertex $i \in Q_0$ there are at most two arrows starting at $i$ and at most two arrows ending at $i$;
\item For each arrow $a \in Q_1$ there is at most one arrow $b$ such that $ba \notin I$ and at most one arrow $c$ such that $ac \notin I$;
\item For each arrow $a \in Q_1$ there is at most one arrow $b$ with $s(b) = e(a)$ and $ba \in I$ and at most one arrow $c$ with $e(c) = s(a)$ and $ac \in I$;
\item The ideal $I$ is generated by length two monomial relations.
\end{enumerate} 
\end{definition}

Note that if one removes condition $(3)$ and relaxes condition $(4)$ so that $I$ is generated simply by monomial relations one obtains a so-called \emph{string algebra}. 

From now on,  $\Lambda$ will be a gentle algebra.

\subsection{String and band complexes}

The indecomposable complexes in $\Db(\Lambda)$ are parametrised by homotopy strings and bands. The original reference is \cite{BM} but we use the notation and terminology of \cite{ALP} dropping  some of the formality regarding cohomological degrees.
All modules will be finitely generated left modules and therefore paths in the quiver will be read from right to left.
For each $a \in Q_1$ we define a formal inverse arrow $\overline{a} = a^{-1}$ with $s(\overline{a}) = e(a)$ and $e(\overline{a}) = s(a)$. For a path $p = a_n \cdots a_1$ in $(Q,I)$ the inverse path is $\overline{p} = \overline{a_1} \cdots \overline{a_n}$.

\begin{definitions}
We recall the following definitions.
\begin{enumerate}
\item A \emph{walk} in $(Q,I)$ is a sequence $w = w_l \cdots w_1$ satisfying $s(w_{i+1}) = e(w_i)$, where each $w_i$ is either an arrow or an inverse arrow, and where the sequence does not contain any subsequence of the form $a \overline{a}$ or $\overline{a} a$ for some $a \in Q_1$. 
\item A \emph{(finite) homotopy string} is a walk of finite length in $(Q,I)$. In addition there are \emph{trivial homotopy strings} for each vertex $x \in Q_0$. 
\item A subwalk $p = w_j \cdots w_i$ of a homotopy string $\sigma = w_l \cdots w_1$ is a \emph{homotopy letter} if 
\begin{enumerate}
\item $p$ or $\overline{p}$ is a path in $(Q,I)$; and,
\item $w_i, \overline{w_{i-1}} \in Q_1$, or $\overline{w_i}, w_{i-1} \in Q_1$, or $w_i w_{i-1} \in I$, or $\overline{w_{i-1}} \overline{w_i} \in I$; and,
\item $w_j, \overline{w_{j+1}} \in Q_1$, or $\overline{w_j}, w_{j+1} \in Q_1$, or $w_{j+1} w_j \in I$, or $\overline{w_j} \overline{w_{j+1}} \in I$.
\end{enumerate}
The subwalk $p$ is a \emph{direct homotopy letter} if it is a path in $(Q,I)$ and an \emph{inverse homotopy letter} if $\overline{p}$ is a path in $(Q,I)$.
This partitions a homotopy string $\sigma$ into homotopy letters and we write $\sigma = \sigma_n \cdots \sigma_1$ for this decomposition. 
A \emph{homotopy subletter} of $p$ is a subwalk of $p$. 
\item A \emph{trivial homotopy letter} is one of the form $p = 1_x$ for some $x \in Q_0$. Sometimes we shall assert the nonexistence of homotopy letters, and in this case we write $p = \emptyset$.
\item Let $\sigma = \sigma_n \cdots \sigma_1$ be a homotopy string decomposed into its homotopy letters. A subwalk $\tau = \sigma_j \cdots \sigma_i$ with $1 \leq i \leq j \leq n$ is called a \emph{homotopy substring} of $\sigma$.
\item A \emph{homotopy band} is a homotopy string $\sigma = \sigma_n \cdots \sigma_1$ with $s(\sigma) = e(\sigma)$, $\sigma_1 \neq \overline{\sigma_n}$, $\sigma \neq \tau^m$ for some homotopy substring $\tau$ and $m >1$, and $\sigma$ has equal numbers of direct and inverse homotopy letters.
\item A (possibly infinite) walk $w = w_n \cdots w_1$ is called a \emph{direct antipath} if each $w_i \in Q_1$ is a direct homotopy letter; it is called an \emph{inverse antipath} if each $\overline{w_i} \in Q_1$ and each $w_i$ is an inverse homotopy letter.
\item A left infinite walk $w = \cdots w_n \cdots w_2 w_1$ is a \emph{left infinite homotopy string} if there exists $m \geq 1$ such that $v = \cdots w_n \cdots w_{m+1} w_m$ is a direct antipath.
\item A right infinite walk $w = w_{-1} w_{-2} \cdots w_{-n} \cdots$ is a \emph{right infinite homotopy string} if there exists $m \geq 1$ such that $v = w_{-m} w_{-m-1} \cdots w_{-n} \cdots$ is an inverse antipath.
\item A two-sided infinite walk $w = \cdots w_2 w_1 w_0 w_{-1} \cdots$ is called a \emph{two-sided infinite homotopy string} if there exist integers $n > m$ such that $\cdots v_{n+1} v_n$ is a direct antipath and $v_m v_{m-1} \cdots$ is an inverse antipath.
\item A two-sided, left or right infinite homotopy string is called an \emph{infinite homotopy string}.
\end{enumerate}  
\end{definitions}

Let $\sigma$ be a (possibly infinite) homotopy string. Modulo the equivalence relation $\sigma \sim \overline{\sigma}$, the set of homotopy strings determine the \emph{string complexes} $\P_\sigma$. 
Up to the equivalence relation given by inversion and cyclic permutation, the homotopy bands together with scalars $\lambda \in \kk^*$   and strictly positive integers $n$ determine the \emph{band complexes}  $\B_{\sigma, \lambda, n}$.  
By \cite[Thm. 3]{BM}, the string and band complexes form a complete set of indecomposable complexes in $\KminusL$ up to isomorphism. We refer to \cite{BM} for precise details of the construction of the complexes $\P_\sigma$ and $\B_{\sigma,\lambda,n}$ from a homotopy string or band $\sigma$.  The indecomposable perfect complexes are precisely the band complexes and the string complexes arising from finite homotopy strings.
 In what follows, we will only consider band complexes $\B_{\sigma, \lambda, 1}$ which we will write as $\B_{\sigma, \lambda}$. When we do not wish to explicitly identify whether an indecomposable complex is a string or band complex we use the notation
\[
\Q_\sigma =
\left\{
\begin{array}{ll}
\P_\sigma                 & \text{if $\sigma$ is a (possibly infinite) homotopy string;} \\
\B_{\sigma,\lambda} & \text{if $\sigma$ is a homotopy band.}
\end{array}
\right.
\]

Let $\sigma = \sigma_n \cdots \sigma_1$ be a homotopy string or band. We shall make frequent use of the following \emph{unfolded diagram} notation from \cite{ALP}:
\[
\xymatrix{ \xydot \arr^-{\sigma_n} & \xydot \ar@{.}[r] & \xydot \arr^-{\sigma_2} & \xydot \arr^-{\sigma_1} & \xydot }.
\]

\subsection{The standard basis}

Let $\sigma$ and $\tau$ be homotopy strings or bands. The main theorem of \cite{ALP} establishes a basis for $\Hom_{\Db(\Lambda)}(\Q_\sigma,\Q_\tau)$ which we recall here.

\begin{theorem}[{\cite[Theorem 3.15]{ALP}}] \label{thm:ALP}
Let $\sigma$ and $\tau$ be homotopy strings or bands. Then there is a basis of $\Hom_{\Db(\Lambda)}(\Q_\sigma,\Q_\tau)$ given by:
\begin{itemize}
\item graph maps $\f \colon \Q_\sigma \to \Q_\tau$;
\item singleton single maps $\f \colon \Q_\sigma \to \Q_\tau$;
\item singleton double maps $\f \colon \Q_\sigma \to \Q_\tau$;
\item quasi-graph maps $\phi \colon \Q_\sigma \rightsquigarrow \Sigma^{-1} \Q_\tau$.
\end{itemize}
\end{theorem}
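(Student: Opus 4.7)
The plan is to establish the theorem by classifying degree-zero cochain maps $\f \colon \Q_\sigma \to \Q_\tau$ up to null-homotopy, using the explicit combinatorial description of string and band complexes recalled above. The key input is that at each cohomological degree the component of $\Q_\sigma$ is a direct sum of indecomposable projectives indexed by a vertex on the unfolded diagram of $\sigma$, and the differentials are built from the direct homotopy letters of $\sigma$. A cochain map is therefore determined by a family of morphisms between such indecomposable projectives, which, since $\Lambda$ is gentle, is in turn controlled by a very restricted set of paths in $(Q,I)$.

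First I would fix unfolded diagrams for $\sigma$ and $\tau$ and parametrise candidate cochain maps by pairs $(i,j)$ of positions together with a non-zero path from the $i$th vertex of $\sigma$ to the $j$th vertex of $\tau$; the axioms defining a gentle algebra (in particular the length-two zero relations and the at-most-two arrow condition) limit such paths to essentially two types: trivial paths and single arrows. The cochain-map condition on neighbouring positions then forces the support of $\f$ to align along a common homotopy substring $\rho$ of $\sigma$ and $\tau$, with boundary behaviour that is either closed-off (giving a \emph{graph map} over the full overlap $\rho$), closed on one side and extended by a single arrow on the other (giving a \emph{single map}), or extended by arrows on both sides (giving a \emph{double map}). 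This case analysis also produces the singleton conditions by ruling out the configurations that will turn out to be homotopic to zero.

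Next I would compute the space of null-homotopies between the same pair of complexes. A null-homotopy is again a family of maps between projectives subject to a cochain equation, and by the same gentle combinatorics one checks that it is spanned by elementary homotopies supported at a single position. Comparing this span with the list of cochain maps above shows that exactly the non-singleton single and double maps become null-homotopic, while the remaining relations identify certain pairs of graph/singleton maps; packaging these relations produces the quasi-graph maps $\phi \colon \Q_\sigma \rightsquigarrow \Sigma^{-1}\Q_\tau$ appearing in the statement.

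Finally, spanning follows by writing an arbitrary cochain map as a sum of the local pieces produced by the classification. The main obstacle is proving linear independence, especially in the band case: the cyclic symmetry of a homotopy band means that distinct combinatorial representatives may a priori encode the same morphism, and one must also rule out unexpected cancellations between graph, single, double and quasi-graph contributions. I would address this by constructing, for each proposed basis element, an explicit witness projective summand at a well-chosen cohomological degree on which the corresponding cochain map is non-zero while all other basis elements vanish. For band complexes $\B_{\sigma,\lambda}$ this requires passing to a fundamental domain on the unfolded diagram and tracking the action of the scalar $\lambda$ to ensure that no two representatives of a cyclic class are double-counted.
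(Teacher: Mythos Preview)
This theorem is not proved in the present paper: it is quoted verbatim from \cite[Theorem~3.15]{ALP} as background and the paper supplies no argument of its own, only the definitions of graph maps, single maps, double maps and quasi-graph maps in the subsections following the statement. There is therefore nothing here to compare your proposal against; the actual proof lives in the Arnesen--Laking--Pauksztello paper.

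That said, two points in your outline would need correction before it could stand on its own. First, the claim that the gentle relations ``limit such paths to essentially two types: trivial paths and single arrows'' is false: in a gentle algebra nonzero paths can have arbitrary length, and indeed the single component $f$ of a single map is allowed to be any nontrivial path in $(Q,I)$, not just an arrow (see Section~\ref{sec:single-def}). Second, your description of the null-homotopy analysis is inverted: it is not that the non-singleton single and double maps become null-homotopic while the singletons survive, but rather that the non-singleton ones which are \emph{not} null-homotopic are exactly those lying in a homotopy class determined by a quasi-graph map (Section~\ref{sec:quasi-graph}); the null-homotopic ones are those whose associated diagram satisfies the graph map endpoint conditions. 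Getting this dichotomy right is the heart of the [ALP] argument.
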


A quasi-graph map is not a map but in fact determines a homotopy class of single and double maps, hence we use the notation $\rightsquigarrow$. We now briefly recap the types of maps occurring in Theorem~\ref{thm:ALP}. 

\subsubsection{Graph maps} \label{sec:graph-map-def}

Suppose $\sigma$ and $\tau$ are homotopy strings or bands of the form,
\[
\sigma = \beta \sigma_L \rho \sigma_R \alpha \text{ and } \tau = \delta \tau_L \rho \tau_R \gamma, \text{ or }
\sigma = \beta \sigma_L \rho \text{ and } \tau = \delta \tau_L \rho,
\]
where $\alpha, \beta, \gamma$ and $\delta$ are homotopy substrings, $\sigma_L, \sigma_R, \tau_L$ and $\tau_R$ 
are homotopy letters (possibly empty, in which case
the corresponding homotopy substring $\alpha, \beta, \gamma$ or $\delta$ would be empty as well), and $\rho$ is a (possibly trivial) maximal common homotopy substring, and in the second case an infinite homotopy substring of $\sigma$ and $\tau$. Moreover, we assume that $\rho$ occurs in the same cohomological degrees in both homotopy strings.
These setups are indicated in the following unfolded diagrams of $\Q_\sigma$ and $\Q_\tau$:

\begin{equation} \label{finite-graph-map}
\xymatrix@!R=4px{
\Q_\sigma \colon & \ar@{~}[r]^-{\beta} & \xydot \arr^-{\sigma_L} \ar[d]_-{f_L} \ar@{}[dr]|{(*)} & \xydot \arr^-{\rho_k} \ar@{=}[d] & \xydot \arr^-{\rho_{k-1}} \ar@{=}[d] & \cdots \arr^-{\rho_2}  & \xydot \arr^-{\rho_1} \ar@{=}[d] & \xydot \arr^-{\sigma_R} \ar@{=}[d] \ar@{}[dr]|{(**)} & \xydot \ar@{~}[r]^-{\alpha} \ar[d]^-{f_R} & \\
\Q_\tau \colon      & \ar@{~}[r]_-{\delta} & \xydot \arr_-{\tau_L}                                                & \xydot \arr_-{\rho_k}                  & \xydot \arr_-{\rho_{k-1}}                  & \cdots \arr_-{\rho_2}  & \xydot \arr_-{\rho_1}                  & \xydot \arr_-{\tau_R}                                                    & \xydot \ar@{~}[r]_-{\gamma}                 & 
}
\end{equation}
\begin{equation} \label{infinite-graph-map}
\xymatrix@!R=4px{
\P_\sigma \colon & \ar@{~}[r]^-{\beta}  & \xydot \arr^-{\sigma_L} \ar[d]_-{f_L} \ar@{}[dr]|{(*)} & \xydot \arr^-{\rho_{-1}} \ar@{=}[d]  & \xydot \arr^-{\rho_{-2}} \ar@{=}[d] & \xydot \arr^-{\rho_{-3}} \ar@{=}[d]  & \xydot \ar@{.}[r] &  \\
\P_\tau \colon      & \ar@{~}[r]_-{\delta} & \xydot \arr_-{\tau_L}                                                 & \xydot \arr_-{\rho_{-1}}                   & \xydot \arr_-{\rho_{-2}}                   & \xydot \arr_-{\rho_{-3}}                  & \xydot \ar@{.}[r] & }
\end{equation}
where we require $(*)$ and $(**)$ to commute. 
This data defines a map of complexes $\f \colon \Q_\sigma \to \Q_\tau$ called a \emph{graph map}. The isomorphisms indicated in diagrams \eqref{finite-graph-map} and \eqref{infinite-graph-map} together with $f_L$ and $f_R$, should they exist, are the \emph{components} of $\f$.

Below we explicitly write down the \emph{right endpoint conditions} making $(**)$ commute, where we have separated out the situation in \eqref{infinite-graph-map} as a kind of `infinite right endpoint condition':
\begin{align*}
\text{(RG1)} \
\xymatrix@!R=4px{
\xydot \ar[r]^-{\sigma_R} \ar@{=}[d]     & \xydot \ar@{~}[r]^-{\alpha} \ar[d]^-{f_R} & \\
\xydot \ar[r]_-{\tau_R = \sigma_R f_R}  & \xydot \ar@{~}[r]_-{\gamma}                 &
}
\quad & \text{(RG2)} \
\xymatrix@!R=4px{
\xydot \ar@{=}[d] & \ar[l]_-{\sigma_R = \bar{\omega} \bar{f}_R} \xydot \ar@{~}[r]^-{\alpha} \ar[d]^-{f_R} & \\
\xydot                   & \ar[l]^-{\tau_R = \bar{\omega}}                                            \xydot \ar@{~}[r]_-{\gamma}                 &
} \\
\text{(RG3)} \
\xymatrix@!R=4px{
\xydot \ar[r]^-{\sigma_R} \ar@{=}[d] &                        \xydot \ar@{~}[r]^-{\alpha}     & \\
\xydot                                               & \ar[l]^-{\tau_R} \xydot \ar@{~}[r]_-{\gamma}  &
}
\quad & \text{(RG$\infty$)} \
\xymatrix@!R=4px{
\xydot \arr^-{\sigma_L} \ar[d]_-{f_L} \ar@{}[dr]|{(*)} & \xydot \arr^-{\rho_{-1}} \ar@{=}[d]  & \xydot \arr^-{\rho_{-2}} \ar@{=}[d] & \xydot \arr^-{\rho_{-3}} \ar@{=}[d]  & \xydot \ar@{.}[r] &  \\
\xydot \arr_-{\tau_L}                                                 & \xydot \arr_-{\rho_{-1}}                   & \xydot \arr_-{\rho_{-2}}                   & \xydot \arr_-{\rho_{-3}}                  & \xydot \ar@{.}[r] & }
\end{align*}
The \emph{left endpoint conditions} making $(*)$ commute are defined dually.

The maximality of $\rho$ as a common homotopy substring of $\sigma$ and $\tau$ necessarily means that $\sigma_L \neq \tau_L$ and $\sigma_R \neq \tau_R$ unless, in each case, both are empty. We write $\length{\rho} = k$ for the \emph{length} of the overlap $\rho$. In the second case, we write $\length{\rho} = \infty$.
                           
In \eqref{finite-graph-map} we use the notation $\Q$ because the complexes may be either string or band complexes. As soon as the maximal common homotopy substring is infinite, however, we are forced to have string complexes arising from infinite homotopy strings. The notation $\xymatrix{ \ar@{~}[r] &}$ denotes a homotopy substring.
In the case $\sigma = \tau$ is a homotopy band then for the Hom-space $\Hom_{\Db(\Lambda)}(\B_{\sigma,\lambda},\B_{\sigma,\mu})$ there is a subtle distinction between the case $\lambda = \mu$ and $\lambda \neq \mu$. In the first case the overlap $\rho = \sigma$ gives rise to a graph map, namely the identity map. In the second case the overlap $\rho = \sigma$ does not give rise to a nonzero (graph) map. In both cases, all other overlaps $\rho$ give rise to graph maps as indicated.

\subsubsection{Single maps} \label{sec:single-def}

The unfolded diagrams of a single map $\f \colon \Q_\sigma \to \Q_\tau$ is
\[
\xymatrix@!R=4px{
\Q_\sigma :  & \ar@{~}[r]^-{\beta}  & \xydot \arr^-{\sigma_L} & \xydot \arr^-{\sigma_R} \ar[d]^-{f} & \xydot \ar@{~}[r]^-{\alpha} & \\
\Q_\tau      :  & \ar@{~}[r]_-{\delta} & \xydot \arr_-{\tau_L}     & \xydot \arr_-{\tau_R}                      & \xydot \ar@{~}[r]_-{\gamma} &
}
\]
where $f$ is a nontrivial path in $(Q,I)$ subject to the following conditions:
\begin{itemize}
\item[(L1)] either $\sigma_L$ is inverse or $\sigma_L$ is direct and $\sigma_Lf$ has a subpath in $I$;
\item[(L2)] either $\tau_L$ is  direct or $\tau_L$ is inverse and $f \bar{\tau}_L$ has a subpath in $I$;
\item[(R1)] either $\sigma_R$ is direct or $\sigma_R$ is inverse and $\bar{\sigma}_R f$ has a subpath in $I$; and
\item[(R2)] either $\tau_R$ is inverse or $\tau_R$ is direct and $f \tau_R$ has a subpath in $I$.
\end{itemize}

A single map $\f \colon \Q_\sigma \to \Q_\tau$, with single component $f$, is called a \emph{singleton single map} if its unfolded diagram, up to inversion of one of $\sigma$ or $\tau$, is
\[
\xymatrix@!R=4px{
\ar@{~}[r]^-{\beta}   & \xydot \arr^-{\sigma_L} & \xydot \ar[d]^{f} \ar[r]^-{\sigma_R = f f_R} & \xydot  \ar@{~}[r]^-{\alpha}                                                                      & \\
\ar@{~}[r]_-{\delta}  & \xydot \arr_-{\tau_L}     & \xydot                                                        & \ar[l]^-{\tau_R = \overline{f} \overline{f_L}} \xydot \ar@{~}[r]_-{\gamma} &
                }
\]
where $f$ does not contain $\sigma_L$ nor $\tau_L$ as a homotopy subletter, $\sigma_L$ and $\tau_L$ never contain $f$ as a homotopy subletter, and any of $\sigma_L$, $\sigma_R$, $\tau_L$ or $\tau_R$ is allowed to be an empty homotopy letter $\emptyset$.

Not all single maps are nonzero in $\KminusL$, those that are occur either as singleton single maps or in a homotopy class determined by a quasi-graph map, which is described below.

\subsubsection{Double maps} \label{sec:double-def}

A double map $\f \colon \Q_\sigma \to\Q_\tau$ has the following unfolded diagram:
\[
\xymatrix@!R=4px{
\Q_\sigma :  & \ar@{~}[r]^-{\beta}  & \xydot \arr^-{\sigma_L} & \xydot \ar[r]^-{\sigma_C} \ar[d]_-{f_L} & \xydot \arr^-{\sigma_R} \ar[d]^-{f_R} & \xydot \ar@{~}[r]^-{\alpha} &  \\
\Q_\tau      :  & \ar@{~}[r]_-{\delta} & \xydot \arr_-{\tau_L}     & \xydot \ar[r]_-{\tau_C}                         & \xydot \arr_-{\tau_R}                          & \xydot \ar@{~}[r]_-{\gamma} &
}
\]
where $f_L$ and $f_R$ are nontrivial paths in $(Q,I)$ such that $f_L \tau_C = \sigma_C f_R$ has no subpath in $I$, conditions (L1) and (L2) hold for $f_L$ and (R1) and (R2) hold for $f_R$.
A double map $\f \colon \Q_\sigma \to \Q_\tau$ is a \emph{singleton double map} if there exists a nontrivial path $f'$ in $(Q,I)$ such that $\sigma_C = f_L f'$ and $\tau_C = f' f_R$.

As above, not all double maps are nonzero in $\KminusL$, those that are occur either as singleton double maps or in a homotopy class determined by a quasi-graph map.

\subsubsection{Quasi-graph maps} \label{sec:quasi-graph}

If in Section~\ref{sec:graph-map-def}\eqref{finite-graph-map}, the squares marked $(*)$ and $(**)$ do not commute, then the diagram \eqref{finite-graph-map} determines a \emph{quasi-graph map} $\varphi \colon \Q_\sigma \rightsquigarrow \Q_\tau$. Below we spell out the non-commuting endpoint conditions for $(**)$, which we call the \emph{quasi-graph map right endpoint conditions},
\[
\text{(RQ1)} \
\xymatrix@!R=4px{
\xydot \ar[r]^-{\sigma_R=\tau_R \tau'_R} \ar@{=}[d]  & \xydot \ar@{~}[r]^-{\alpha}     & \\
\xydot \ar[r]_-{\tau_R}                                                 & \xydot \ar@{~}[r]_-{\gamma}  &
}
\quad  \text{(RQ2)} \
\xymatrix@!R=4px{
\xydot \ar@{=}[d] & \ar[l]_-{\sigma_R = \bar{\omega}} \xydot \ar@{~}[r]^-{\alpha} & \\
\xydot                   & \ar[l]^-{\tau_R = \bar{\omega} \bar{\omega'}}                                            \xydot \ar@{~}[r]_-{\gamma}                 &
}
\quad \text{(RQ3)} \
\xymatrix@!R=4px{
\xydot \ar@{<-}[r]^-{\sigma_R} \ar@{=}[d] &                        \xydot \ar@{~}[r]^-{\alpha}     & \\
\xydot                                               & \ar@{<-}[l]^-{\tau_R} \xydot \ar@{~}[r]_-{\gamma}  &
}
\]
where $\omega$ and $\omega'$ are direct homotopy letters, and all primed homotopy (sub)letters are nontrivial. In (RQ3) we also permit $\sigma_R$ or $\tau_R$ (but not both) to be zero.
The \emph{quasi-graph map left endpoint conditions} are defined dually.

A quasi-graph map $\varphi \colon \Q_\sigma \rightsquigarrow \Sigma^{-1} \Q_\tau$ determines a family of homotopy equivalent single and/or double maps. All nonzero single or double maps that are not singleton occur in this way. We refer to \cite[Prop. 4.8]{ALP} for explicit details on the single or double maps lying in a homotopy class determined by a given quasi-graph map.

Finally, we remark `quasi-graph maps' satisfying the graph map endpoint conditions give rise to a family of null-homotopic single and/or double maps. As such quasi-graph maps never arise from diagrams of the form \eqref{infinite-graph-map} because such diagrams always satisfy a graph map endpoint conditions, namely (RG$\infty$) or its dual (LG$\infty$). 

\section{Mapping cones of graph maps} \label{sec:graph-maps}

In this section we compute the mapping cones of graph maps. We first start by describing the unfolded diagram of the mapping cone of a graph map.

\subsection{The unfolded diagram of the mapping cone of a graph map} \label{sec:graph-mc-unfolded}

Let $\f \colon \Q_\sigma \to \Q_\tau$ be a graph map. The unfolded diagram of $\M_{\f}$ has one of the following forms.
\[
\xymatrix@!R=4px{
\ar@{~}[r]^-{-\beta} & \xydot \arr^-{-\sigma_L} \ar[dr]_<<<<{f_L}  & \xydot \arr^-{-\rho_k} \ar@{=}[dr] & \xydot \arr^-{-\rho_{k-1}} \ar@{=}[dr] & \cdots \arr^-{-\rho_2}  & \xydot \arr^-{-\rho_1} \ar@{=}[dr] & \xydot \arr^-{-\sigma_R} \ar@{=}[dr]  & \xydot \ar@{~}[r]^-{-\alpha} \ar[dr]^>>>>{f_R} & & \\
 & \ar@{~}[r]_-{\delta} & \xydot \arr_-{\tau_L}                                                & \xydot \arr_-{\rho_k}                  & \xydot \arr_-{\rho_{k-1}}                  & \cdots \arr_-{\rho_2}  & \xydot \arr_-{\rho_1}                  & \xydot \arr_-{\tau_R}                                                    & \xydot \ar@{~}[r]_-{\gamma}                 & 
}
\]
\[
\xymatrix@!R=4px{
 \ar@{.}[r] & \xydot \arr^-{-\rho_3} \ar@{=}[dr] & \xydot \arr^-{-\rho_2} \ar@{=}[dr] & \xydot \arr^-{-\rho_1} \ar@{=}[dr] & \xydot \arr^-{-\sigma_R} \ar@{=}[dr]  & \xydot \ar@{~}[r]^-{-\alpha} \ar[dr]^>>>>{f_R} & & \\
 & \ar@{.}[r] & \xydot \arr_-{\rho_3}                 & \xydot \arr_-{\rho_2}                   & \xydot \arr_-{\rho_1}                  & \xydot \arr_-{\tau_R}                                                & \xydot \ar@{~}[r]_-{\gamma}                 & 
}
\]

\subsection{Mapping cones of graph maps between string complexes}

Let $\sigma$ and $\tau$ be homotopy strings and $\f \colon \P_\sigma \to \P_\tau$ be a graph map between the corresponding string complexes. In this case the generic situation is for $\M_{\f}$ to have two indecomposable summands, which are again string complexes and whose strings can be read off from $\sigma$, $\tau$ and $\f$ in a natural way. We start with a technical definition which ensures that the unfolded diagram of the graph map is properly oriented for the mapping cone calculus in the case that $\f$ is a graph map supported in precisely one degree.

\begin{definition} \label{def:graph-compatible}
Let $\sigma$ and $\tau$ be homotopy strings and suppose $\f \colon \P_\sigma \to \P_\tau$ is a graph map concentrated in precisely one degree, i.e. the maximal common homotopy substring $\rho$ is trivial and satisfies the graph map endpoint conditions (RG3) and (LG3):
\[
\xymatrix@!R=4px{
\ar@{~}[r]^-{\beta}  & \xydot                         & \ar[l]_-{\sigma_L} \xydot \ar@{=}[d] \ar[r]^-{\sigma_R} & \xydot \ar@{~}[r]^-{\alpha}                             &  \\
\ar@{~}[r]_-{\delta} & \xydot \ar[r]_-{\tau_L} & \xydot                                                                           & \ar[l]^-{\tau_R} \xydot \ar@{~}[r]_-{\gamma} & 
}
\]
We say that the homotopy strings $\sigma$ and $\tau$ are \emph{compatibly oriented} if $\sigma_L \tau_L \neq 0$ and $\sigma_R \tau_R \neq 0$.
\end{definition}

We observe that if a graph map is supported in more than one degree, either by having precisely one isomorphism and satisfying endpoint conditions so that one of $f_L$ or $f_R$ is nonempty, or by having isomorphisms in at least two cohomological degrees (making $\rho$ of length at least $1$), then the homotopy strings and graph map between them is forced to be `compatibly oriented'.

Note that in the following if a homotopy string $\sigma = \emptyset$ then the corresponding string complex $\P_\sigma \cong 0^\bullet$.

\begin{theorem} \label{thm:graph-maps}
Let $\sigma$ and $\tau$ be (possibly infinite) homotopy strings and suppose $\f \colon \P_\sigma \to \P_\tau$ is a graph map. Suppose that  $\sigma$ and $\tau$ are compatibly oriented. Then we have the following cases.
\begin{enumerate}
\item If $\sigma = \beta \sigma_L \rho \sigma_R \alpha$ and $\tau = \delta \tau_L \rho \tau_R \gamma$ then the mapping cone $\M_{\f}$ is isomorphic in $\KminusL$ to the direct sum of $\P_{c_1} \oplus \P_{c_2}$, where
\[
c_1 = 
\left\{
\begin{array}{ll}
\bar{\gamma} \bar{\tau}_R \sigma_R \alpha & \text{ if } \sigma_R \neq \emptyset \text{ and } \tau_R \neq \emptyset; \\
\bar{\gamma}                              & \text{ if } \sigma_R = \emptyset; \\
\alpha                                    & \text{ if } \tau_R = \emptyset; \\
\emptyset                               & \text{ if } \sigma_R = \emptyset \text{ and } \tau_R = \emptyset,
\end{array}
\right.
\text{ and } 
c_2 = 
\left\{
\begin{array}{ll}
\beta \sigma_L \bar{\tau}_L \bar{\delta} & \text{ if } \sigma_L \neq \emptyset \text{ and } \tau_L \neq \emptyset; \\
\bar{\delta}                                             & \text{ if } \sigma_L = \emptyset; \\
\beta                                                      & \text{ if } \tau_L = \emptyset; \\
\emptyset                                            & \text{ if } \sigma_L = \emptyset \text{ and } \tau_L = \emptyset,
\end{array}
\right.
\]
\item If  $\sigma = \rho \sigma_R \alpha$ and $\tau = \rho \tau_R \gamma$ then the mapping cone $\M_{\f}$ is isomorphic in $\KminusL$ to $\P_c$, where 
\[
c = 
\left\{
\begin{array}{ll} 
\bar{\gamma} \bar{\tau}_R \sigma_R \alpha & \text{ if } \sigma_R \neq \emptyset \text{ and } \tau_R \neq \emptyset; \\
\bar{\gamma}                                                & \text{ if } \sigma_R = \emptyset; \\
\alpha                                                            & \text{ if } \tau_R = \emptyset; \\
\emptyset                                                    & \text{ if } \sigma_R = \emptyset \text{ and } \tau_R = \emptyset.
\end{array}
\right.
\]
\end{enumerate}
\end{theorem}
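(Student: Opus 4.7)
The plan is to proceed by iterated Gaussian elimination on the unfolded diagram of $\M_{\f}$ recorded in Section~\ref{sec:graph-mc-unfolded}. The key observation is that at each of the $k+1$ vertices of the overlap $\rho$ the mapping cone has an identity component in its differential going from a shifted summand of $\P_\sigma$ (top row) onto the corresponding summand of $\P_\tau$ (bottom row); these identities are precisely the isomorphism columns of the graph map. I would invoke the standard lemma that an identity component in the differential of a bounded-above complex of projectives can be cancelled to produce a homotopy equivalent complex, with the remaining differentials adjusted by a Schur-complement formula, and apply it once for each vertex of $\rho$.

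Each internal cancellation contracts one rung of the ladder-shaped overlap region. Because the top horizontal arrows carry signs $-\rho_i$ while the bottom ones are $\rho_i$, the Schur-complement contributions cancel cleanly and each step just shortens the ladder by one letter of $\rho$ without introducing spurious composites. Once all internal cancellations have been performed, the two outermost identity rungs remain; cancelling each produces a single new arrow between the vertex $u$ of $\sigma$ just outside the overlap on that side and the corresponding vertex $u'$ of $\tau$. The direction and labelling of this new arrow is controlled by the endpoint conditions (LG1)--(LG3), (RG1)--(RG3) together with the graph map components $f_L$ and $f_R$. A case analysis shows that in every configuration the new arrow implements precisely the ``turning point'' linking $\sigma_L$ to $\bar{\tau}_L$ at the left end (respectively $\sigma_R$ to $\bar{\tau}_R$ at the right end), which is exactly what the unfolded diagrams of $c_2 = \beta \sigma_L \bar{\tau}_L \bar{\delta}$ and $c_1 = \bar{\gamma} \bar{\tau}_R \sigma_R \alpha$ require. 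The post-cancellation complex is then manifestly a disjoint union of two unfolded diagrams, giving $\P_{c_1} \oplus \P_{c_2}$.

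The boundary cases in which any of $\sigma_L, \sigma_R, \tau_L, \tau_R$ is empty are handled by observing that the corresponding graph map component $f_L$ or $f_R$ is then also absent, so the cancellation simply truncates the relevant end of the overlap and the resulting string reduces as listed. Case~(2) is the limiting situation: the overlap is infinite, so an infinite but well-defined (thanks to right-boundedness of the complexes) cancellation consumes the entire left half of the mapping cone, leaving only the right piece $\P_c$.

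The main technical obstacle lies in the case $\length{\rho}=0$. Here no identity rung sits strictly inside the overlap, and cancellation of the unique rung at the overlap vertex produces arrows coming from the compositions $\sigma_L \tau_L$ and $\sigma_R \tau_R$; for $c_1$ and $c_2$ to be genuine homotopy strings these compositions must be nonzero in $\Lambda$, which is exactly the \emph{compatibly oriented} hypothesis of Definition~\ref{def:graph-compatible}. When $\length{\rho} \geq 1$ the analogous compositions are forced to be nonzero by the graph map endpoint conditions together with the existence of $\rho$ inside $(Q,I)$, so no additional hypothesis is required. The remaining bookkeeping -- verifying the signs produced by each Schur complement, and checking that no forbidden subsequence $a\bar{a}$ or $\bar{a}a$ arises at the turning points (which follows from maximality of $\rho$, forcing $\sigma_L \neq \tau_L$ and $\sigma_R \neq \tau_R$ whenever both are nonempty) -- is mechanical once the setup is in place.
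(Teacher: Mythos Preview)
Your strategy is the same as the paper's: cancel the identity components of the differential of $\M_{\f}$ coming from the isomorphism columns of the graph map. The difference is in how the cancellation is organised. You propose to walk through all $k+1$ rungs of the ladder one by one, relying on the claim that ``the Schur-complement contributions cancel cleanly and each step just shortens the ladder by one letter of $\rho$''. The paper does something more economical: it applies the cancellation lemma (Lemma~\ref{lem:homotopy}) only at the two outermost rungs, using a six-case local analysis (Lemma~\ref{lem:six-cases} and Corollary~\ref{cor:homotopy}) to identify the new turning arrow on each side and to check that the possible cross-terms (such as $\tau_L\sigma_R$ or $\rho_1\sigma_R$) vanish by gentleness and the endpoint conditions; what remains in the middle is then recognised \emph{en bloc} as the mapping cone of $\id\colon \P_{\rho'}\to\P_{\rho'}$ for $\rho'=\rho_{k-1}\cdots\rho_2$, which is contractible without any further computation.

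This buys two things. First, the paper never has to justify your ``cancel cleanly'' claim: after one internal cancellation the Schur complement genuinely does introduce a new arrow from the bottom row to the top row (not just a shorter ladder), and tracking why these contributions assemble correctly through $k-1$ iterations is exactly the bookkeeping the paper sidesteps. Second, in case~(2) your ``infinite but well-defined cancellation'' is delicate to make rigorous, whereas the paper just performs a single cancellation at the finite end and observes that the infinite tail is the mapping cone of the identity on an infinite string complex, hence contractible in one stroke. Your treatment of the $\length{\rho}=0$ case and the role of the compatible-orientation hypothesis is accurate and matches the paper's.
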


\begin{remark} \label{rem:endpoints}
If in Theorem~\ref{thm:graph-maps}(1) the graph map right endpoint conditions (RG1) or (RG2) occur then in either case $\bar{\gamma} \bar{\tau}_R \sigma_R \alpha = \bar{\gamma} \bar{f}_R \alpha$. Similarly, if the left endpoint conditions (LG1) or (LG2) occur then $\beta \sigma_L \bar{\tau}_L \bar{\delta}=\beta f_L \bar{\delta}$.
\end{remark}

In order to prove Theorem~\ref{thm:graph-maps} we first need the following well-known result. For the convenience of the reader we provide a short proof.

\begin{lemma} \label{lem:homotopy}
Let $\P$ be a complex of the form
\[
\xymatrix{
\P \colon & \cdots \ar[r] & P^{n-1} \ar[r]^-{\begin{pmat} a_1 & a_2 \end{pmat}} & P^n \oplus Q \ar[r]^-{\begin{pmat} b_1 & b _2 \\ b_3 & 1 \end{pmat}} &  P^{n+1} \oplus Q \ar[r]^-{\begin{pmat} c_1 \\ c_2 \end{pmat}} &  P^{n+2} \ar[r] & \cdots.
}
\] 
Then $\P \cong (P')^\bullet \oplus \Q$, where $(P')^\bullet$ and $\Q$ are the following complexes,
\begin{align*}
(P')^\bullet \colon & \xymatrix{
\cdots \ar[r] & P^{n-1} \ar[r]^-{a_1} & P^n \ar[r]^-{b_1 - b_2 b_3} & P^{n+1} \ar[r]^-{c_1} & P^{n+2} \ar[r] & \cdots
}; \\
\Q \colon & \xymatrix{
\cdots \ar[r] & 0 \ar[r] & Q \ar[r]^-{1} & Q \ar[r] & 0 \ar[r] & \cdots
},
\end{align*} 
where $(P')^i = P^i$ and $Q^i = 0$ whenever $i \notin \{n, n+1\}$. In particular, in the homotopy category $\P \cong (P')^\bullet$. 
\end{lemma}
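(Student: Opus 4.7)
The plan is to exhibit an explicit isomorphism $\phi^\bullet \colon (P')^\bullet \oplus \Q \too \P$ of cochain complexes by Gaussian elimination, using the identity in the $(2,2)$-entry of $d^n$ to kill the off-diagonal blocks $b_2$ and $b_3$. Concretely, I would set $\phi^i = \mathrm{id}$ for $i \notin \{n, n+1\}$, and in degrees $n$ and $n+1$ define $\phi^n$ and $\phi^{n+1}$ by the unipotent block-triangular matrices $\begin{pmat} 1 & -b_2 \\ 0 & 1 \end{pmat}$ and $\begin{pmat} 1 & 0 \\ b_3 & 1 \end{pmat}$ respectively. Both are evidently invertible.

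The key step is to verify the chain-map square at degrees $n \to n+1$, which reduces to the matrix identity
\[
\begin{pmat} 1 & -b_2 \\ 0 & 1 \end{pmat} \begin{pmat} b_1 & b_2 \\ b_3 & 1 \end{pmat} = \begin{pmat} b_1 - b_2 b_3 & 0 \\ 0 & 1 \end{pmat} \begin{pmat} 1 & 0 \\ b_3 & 1 \end{pmat};
\]
both sides compute to $\begin{pmat} b_1 - b_2 b_3 & 0 \\ b_3 & 1 \end{pmat}$. The remaining two nontrivial boundary squares, at $n-1 \to n$ and $n+1 \to n+2$, then reduce respectively to the identities $a_2 = -a_1 b_2$ and $c_2 = -b_3 c_1$, each of which is forced by the cocycle relations $d^{n-1} d^n = 0$ and $d^n d^{n+1} = 0$ in $\P$. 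All other squares involve only identities and commute trivially. This establishes $\P \cong (P')^\bullet \oplus \Q$ as cochain complexes.

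The final claim in $\KminusL$ is then immediate, since $\Q$ is contractible: the identity $Q \to Q$ placed as a degree $-1$ map from position $n+1$ to position $n$ is a contracting homotopy. There is no real obstacle here — the argument is a textbook instance of Gaussian elimination in the category of complexes. The only subtlety requiring care is the composition convention: since maps in this paper are composed left-to-right, matrices are viewed as acting on row vectors from the right, but standard matrix multiplication still applies in the natural order, so nothing unexpected occurs.
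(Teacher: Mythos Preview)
Your proof is correct and essentially identical to the paper's: you use the same unipotent block-triangular matrices $\begin{pmat} 1 & -b_2 \\ 0 & 1 \end{pmat}$ and $\begin{pmat} 1 & 0 \\ b_3 & 1 \end{pmat}$ (these are exactly the paper's inverse isomorphism from $(P')^\bullet \oplus \Q$ to $\P$), and you verify the chain-map squares via the same relations $a_2 = -a_1 b_2$ and $c_2 = -b_3 c_1$ extracted from $d^2 = 0$. The paper additionally records the inverse matrices explicitly, but this adds nothing since your $\phi^n$, $\phi^{n+1}$ are manifestly invertible.
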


\begin{proof}
The required isomorphism (and its inverse) are recorded in the following commutative diagram of complexes.
\[
\xymatrix@!C=5.5pc{
\P \colon \ar[d]_-{\sim}                    & P^{n-1} \ar[r]^-{\begin{pmat} a_1 & a_2 \end{pmat}} \ar@{=}[d] & P^n \oplus Q \ar[r]^-{\begin{pmat} b_1 & b _2 \\ b_3 & 1 \end{pmat}} \ar[d]_-{\begin{pmat} 1 & b_2 \\ 0 & 1 \end{pmat}}        &  P^{n+1} \oplus Q \ar[r]^-{\begin{pmat} c_1 \\ c_2 \end{pmat}} \ar[d]^-{\begin{pmat} 1 & 0 \\ -b_3 & 1 \end{pmat}} &  P^{n+2} \ar@{=}[d]  \\
(P')^\bullet \oplus \Q \colon \ar[d]_-{\sim} & P^{n-1} \ar[r]_-{\begin{pmat} a_1 & 0 \end{pmat}} \ar@{=}[d]   & P^n \oplus Q \ar[r]^-{\begin{pmat} b_1 - b_2 b_3 & 0 \\ 0 & 1 \end{pmat}} \ar[d]_-{\begin{pmat} 1 & -b_2 \\ 0 & 1 \end{pmat}}  &  P^{n+1} \oplus Q \ar[r]^-{\begin{pmat} c_1 \\ 0 \end{pmat}} \ar[d]^-{\begin{pmat} 1 & 0 \\ b_3 & 1 \end{pmat}}    &  P^{n+2} \ar@{=}[d] \\
\P \colon                                   & P^{n-1} \ar[r]_-{\begin{pmat} a_1 & a_2 \end{pmat}}           & P^n \oplus Q \ar[r]_-{\begin{pmat} b_1 & b _2 \\ b_3 & 1 \end{pmat}}                                                           &  P^{n+1} \oplus Q \ar[r]_-{\begin{pmat} c_1 \\ c_2 \end{pmat}}                                                     &  P^{n+2}   
}
\]
Note that the commutativity of each square follows from observing that $a_2 = -a_1 b_2$ and $c_2 = - b_3 c_1$, which follows from the fact that $\P$ is a complex so that the differential squares to zero. The final statement follows by observing that $\Q$ is a contractible complex.
\end{proof}

\begin{remark}[Unfolded diagram interpretation of Lemma~\ref{lem:homotopy}]
We can reformulate the situation of Lemma~\ref{lem:homotopy} in terms of unfolded diagrams as follows. The initial complex $\P$ has the following `unfolded diagram':
\[
\xymatrix@!R=4pt{
           &                                     & \xydot \ar[r]^-{b_3} \ar@{=}[dr]   & \xydot \ar[r]^-{c_1}  & \xydot \ar@{~}[r] & \, . \\
\ar@{~}[r] & \xydot \ar[r]_-{a_1} \ar[ur]^-{a_2}  & \xydot \ar[ur]^<<<{b_1} \ar[r]_-{b_2} & \xydot \ar[ur]_-{c_2} &                  &  }
\]
The complex $(P')^\bullet$, to which $\P$ is homotopic, has the following `unfolded diagram':
\[
\xymatrix@!R=4pt{
           &                       &                                    & \xydot \ar[r]^-{c_1}  & \xydot \ar@{~}[r] & \, . \\
\ar@{~}[r] & \xydot \ar[r]_-{a_1}  & \xydot \ar@{-->}[ur]^-{b_1-b_2 b_3}  &                      &                  &  }
\]
\end{remark}

Let $\sigma$ and $\tau$ be (possibly infinite) homotopy strings and suppose $\f \colon \P_\sigma \to \P_\tau$ is a graph map. Since $\f$ is a graph map, at least one nonzero component of $\f$ is an isomorphism. The next lemma summarises a straightforward analysis of the form of an unfolded diagram of $\f$ at one such component. 

\begin{lemma} \label{lem:six-cases}
Let $\sigma$ and $\tau$ be (possibly infinite) homotopy strings and suppose $\f \colon \P_\sigma \to \P_\tau$ is a graph map. At any given component that is an isomorphism, the unfolded diagram of $\f$ locally has the one of the following six forms, up to inversion of $\sigma$, $\tau$ or both:
\begin{align*}
& (i) \quad
\xymatrix@!R=4pt{
\ar@{~}[r]^-{\beta}  & \xydot \ar[r]^-{\sigma_L} \ar[d]_-{f_L} & \xydot \ar[r]^-{\sigma_R} \ar@{=}[d] & \xydot \ar[d]^-{f_R} \ar@{~}[r]^-{\alpha} & \\
\ar@{~}[r]_-{\delta} & \xydot \ar[r]_-{\tau_L}                 & \xydot \ar[r]_-{\tau_R}             & \xydot \ar@{~}[r]_-{\gamma}               &
},
& (ii) \quad
\xymatrix@!R=4pt{
\ar@{~}[r]^-{\beta}  & \xydot \ar[r]^-{\sigma_L} \ar[d]_-{f_L} & \xydot \ar[r]^-{\sigma_R} \ar@{=}[d] & \xydot \ar@{~}[r]^-{\alpha}  & \\
\ar@{~}[r]_-{\delta} & \xydot \ar[r]_-{\tau_L}                 & \xydot \ar@{<-}[r]_-{\tau_R}         & \xydot \ar@{~}[r]_-{\gamma} &
}, \\
& (iii) \quad
\xymatrix@!R=4pt{
\ar@{~}[r]^-{\beta}  & \xydot \ar[r]^-{\sigma_L} \ar[d]_-{f_L} & \xydot \ar@{<-}[r]^-{\sigma_R} \ar@{=}[d] & \xydot \ar[d]^-{f_R} \ar@{~}[r]^-{\alpha} & \\
\ar@{~}[r]_-{\delta} & \xydot \ar[r]_-{\tau_L}                 & \xydot \ar@{<-}[r]_-{\tau_R}             & \xydot \ar@{~}[r]_-{\gamma}               &
},
& (iv) \quad
\xymatrix@!R=4pt{
\ar@{~}[r]^-{\beta}  & \xydot \ar@{<-}[r]^-{\sigma_L}  & \xydot \ar[r]^-{\sigma_R} \ar@{=}[d] & \xydot \ar[d]^-{f_R} \ar@{~}[r]^-{\alpha} & \\
\ar@{~}[r]_-{\delta} & \xydot \ar[r]_-{\tau_L}         & \xydot \ar[r]_-{\tau_R}             & \xydot \ar@{~}[r]_-{\gamma}               &
}, \\
& (v) \quad
\xymatrix@!R=4pt{
\ar@{~}[r]^-{\beta}  & \xydot \ar@{<-}[r]^-{\sigma_L}  & \xydot \ar[r]^-{\sigma_R} \ar@{=}[d] & \xydot \ar@{~}[r]^-{\alpha} & \\
\ar@{~}[r]_-{\delta} & \xydot \ar[r]_-{\tau_L}         & \xydot \ar@{<-}[r]_-{\tau_R}         & \xydot \ar@{~}[r]_-{\gamma} &
},
& (vi) \quad
\xymatrix@!R=4pt{
\ar@{~}[r]^-{\beta}  & \xydot \ar@{<-}[r]^-{\sigma_L} \ar[d]_-{f_L} & \xydot \ar[r]^-{\sigma_R} \ar@{=}[d] & \xydot \ar[d]^-{f_R} \ar@{~}[r]^-{\alpha} & \\
\ar@{~}[r]_-{\delta} & \xydot \ar@{<-}[r]_-{\tau_L}                 & \xydot \ar[r]_-{\tau_R}             & \xydot \ar@{~}[r]_-{\gamma}               &
}.
\end{align*}
Here $\sigma_L$, $\sigma_R$, $\tau_L$ and $\tau_R$ need not be located at the endpoints of overlaps and may be empty paths. The homotopy letters $f_L$, $f_R$ may be paths, trivial paths or empty paths.
\end{lemma}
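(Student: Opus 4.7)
The plan is a systematic case analysis on the configuration of the four homotopy letters $\sigma_L, \sigma_R, \tau_L, \tau_R$ immediately adjacent to the fixed isomorphism component of $\f$, followed by a symmetry reduction via inversion of $\sigma$ or $\tau$.

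First, I would fix the chosen iso component and examine its immediate neighbourhood in the unfolded diagrams of $\P_\sigma$ and $\P_\tau$. Each of $\sigma_L, \sigma_R, \tau_L, \tau_R$ is either empty (we are at an endpoint of the string), direct (the letter points rightward in the unfolded diagram), or inverse (it points leftward). These possibilities are constrained only by the definition of a homotopy letter via the gentle relations at the iso vertex, so any pairing $(\sigma_L, \sigma_R)$ or $(\tau_L, \tau_R)$ of direction types is a priori permitted.

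Second, I would apply the graph map axioms. The iso component is either strictly inside the maximal common substring $\rho$, in which case $\sigma_L = \tau_L$ and $\sigma_R = \tau_R$ and the vertical components at the neighbouring vertices are identities; or it sits at the left or right boundary of $\rho$, in which case the endpoint conditions (LG1), (LG2), (LG3) and (RG1), (RG2), (RG3) spelled out in Section~\ref{sec:graph-map-def} dictate the direction patterns of $(\sigma_L, \tau_L)$ and $(\sigma_R, \tau_R)$ and determine whether $f_L$ or $f_R$ is present. Thus each side of the iso carries four sub-possibilities (the three endpoint conditions plus the ``inside overlap'' case), yielding at most sixteen local configurations to analyse.

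Finally, I would reduce using the symmetries $\P_\sigma \cong \P_{\bar{\sigma}}$ and $\P_\tau \cong \P_{\bar{\tau}}$: inverting $\sigma$ or $\tau$ swaps the direct and inverse labels on all letters in the relevant row and reverses the left--right positioning within that row. A direct tabulation then shows that all sixteen a priori configurations collapse onto precisely the six local diagrams (i)--(vi) in the statement. The main obstacle is bookkeeping rather than mathematics: one must carefully track how inversion interacts with the L/R labels so that every configuration is matched to exactly one of the listed cases, and check that the form of $f_L, f_R$ (nontrivial path, trivial path, or empty) in each listed diagram is consistent with the dictating endpoint condition. No further representation-theoretic input is needed beyond unpacking the graph map definitions.
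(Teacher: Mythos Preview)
Your proposal is correct and is precisely the kind of argument the paper has in mind: the paper does not give a separate proof of this lemma but introduces it as ``summaris[ing] a straightforward analysis of the form of an unfolded diagram of $\f$ at one such component'', i.e.\ exactly the enumeration of direction patterns for $\sigma_L,\sigma_R,\tau_L,\tau_R$ modulo the inversion symmetries that you describe. If anything you are being more thorough than necessary by invoking the endpoint conditions (LG1)--(LG3), (RG1)--(RG3): since the lemma explicitly allows the letters not to sit at the ends of the overlap and allows $f_L,f_R$ to be trivial, the reduction to six cases is already just the $2^4$ orientation choices collapsed by inverting $\sigma$ and/or $\tau$.
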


Recall the form of the unfolded diagram of $\M_{\f}$ from Section~\ref{sec:graph-mc-unfolded}.
The following corollary applies the unfolded diagram interpretation of Lemma~\ref{lem:homotopy} to each of the local situations occurring in Lemma~\ref{lem:six-cases}. The resulting unfolded diagram gives rise to a complex that is homotopic to $\M_{\f}$.

\begin{corollary} \label{cor:homotopy} 
Let $\sigma$ and $\tau$ be (possibly infinite) homotopy strings and suppose $\f \colon \P_\sigma \to \P_\tau$ is a graph map.
For each of the cases in Lemma~\ref{lem:six-cases} above, the following (compatibly oriented) unfolded diagrams correspond to complexes that are homotopic to $\M_{\f}$, where we use the notation of Lemma~\ref{lem:six-cases}.
\begin{align*}
& (i) \quad
\xymatrix@!R=4pt{
\ar@{~}[r]^-{-\beta} & \xydot \ar[dr]^-{f_L} &                                         & \xydot \ar[dr]^-{f_R} \ar@{~}[r]^-{-\alpha} &                             & \\
                    & \ar@{~}[r]_-{\delta}      & \xydot \ar@{-->}[ur]_-{\tau_L \sigma_R} &                                            & \xydot \ar@{~}[r]_-{\gamma} &
}
& (ii) \quad
\xymatrix@!R=4pt{
\ar@{~}[r]^-{-\beta} & \xydot \ar[dr]^-{f_L} &                                         & \xydot  \ar@{~}[r]^-{-\alpha} &                                                                  & \\
                    & \ar@{~}[r]_-{\delta}      & \xydot \ar@{-->}[ur]_-{\tau_L \sigma_R} &                              & \ar@{-->}[ul]_-{\overline{\tau_R}\sigma_R} \xydot \ar@{~}[r]_-{\gamma} &
} \\
& (iii) \quad
\xymatrix@!R=4pt{
\ar@{~}[r]^-{-\beta} & \xydot \ar[dr]^-{f_L} &         & \xydot \ar[dr]^-{f_R} \ar@{~}[r]^-{-\alpha} &                             & \\
                    & \ar@{~}[r]_-{\delta}   & \xydot  &                                            & \xydot \ar@{~}[r]_-{\gamma} &
} 
& (iv) \quad
\xymatrix@!R=4pt{
\ar@{~}[r]^-{-\beta} & \xydot           &                                                                               & \xydot \ar[dr]^-{f_R} \ar@{~}[r]^-{-\alpha} &                             & \\
                    & \ar@{~}[r]_-{\delta} & \ar@{-->}[ul]_-{\sigma_L \overline{\tau_L}} \xydot \ar@{-->}[ur]_-{\tau_L \sigma_R} &                                            & \xydot \ar@{~}[r]_-{\gamma} &
} \\
& (v) \quad
\xymatrix@!R=4pt{
\ar@{~}[r]^-{-\beta} & \xydot           &                                                                               & \xydot \ar@{~}[r]^-{-\alpha} &                             & \\
                    & \ar@{~}[r]_-{\delta} & \ar@{-->}[ul]^-{\sigma_L \overline{\tau_L}} \xydot \ar@{-->}[ur]_<{\tau_L \sigma_R} &                             & \ar@{-->}[ulll]_>{\sigma_L \tau_R} \ar@{-->}[ul]_-{\overline{\tau_R} \sigma_R} \xydot \ar@{~}[r]_-{\gamma} &
}
& (vi) \quad
\xymatrix@!R=4pt{
\ar@{~}[r]^-{-\beta} & \xydot \ar[dr]^-{f_L} &        & \xydot \ar[dr]^-{f_R} \ar@{~}[r]^-{-\alpha} &                             & \\
                    & \ar@{~}[r]_-{\delta}   & \xydot &                                            & \xydot \ar@{~}[r]_-{\gamma} &
}
\end{align*}
Here the dashed arrows indicate components of the map $b_1 - b_2 b_3$ in Lemma~\ref{lem:homotopy}. Note that in each case $b_1 = 0$.
\end{corollary}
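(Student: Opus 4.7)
The plan is to apply Lemma~\ref{lem:homotopy} locally at the isomorphism component of $\f$ in order to fold it away. From the unfolded diagram of $\M_{\f}$ given in Section~\ref{sec:graph-mc-unfolded}, the isomorphism component sits as an identity $P^m_\sigma = P^m_\tau$ along a diagonal $\searrow$ inside a $2 \times 2$ block of $d^{m-1}_{M_{\f}}$ of the form $\begin{pmat} b_1 & b_2 \\ b_3 & 1 \end{pmat}$, after suitably permuting the summands of $M^{m-1}$ and $M^m$. Since a graph map has no components from $\P_\tau$-modules to $\P_\sigma$-modules, the entry $b_1$ is always zero, and the replacement differential produced by Lemma~\ref{lem:homotopy} is simply $-b_2 b_3$. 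The outer compatibility conditions $a_2 = -a_1 b_2$ and $c_2 = -b_3 c_1$ of that lemma are automatic because $\M_{\f}$ is already a complex.

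For each of the six cases of Lemma~\ref{lem:six-cases}, I read off $b_2$ and $b_3$ directly from the unfolded diagram. The block $b_2$ records the components of $d^{m-1}_{M_{\f}}$ leaving the identity source $P^m_\sigma$ other than the identity itself; these are given (with signs inherited from the negated top row) by $-\sigma_L$ and/or $-\sigma_R$, according as each letter is direct or inverse relative to the identity. Dually, $b_3$ records the components entering the identity target $P^m_\tau$ other than the identity itself, given by $\tau_L$ and/or $\tau_R$. The composition $b_2 \circ b_3$ is then interpreted as a path through the identified module $P^m_\sigma = P^m_\tau$, and combined with the minus sign in $-b_2 b_3$ together with the negations on the top row this yields precisely the dashed arrows of the corollary, labelled $\tau_L\sigma_R$, $\overline{\tau_R}\sigma_R$, $\sigma_L\overline{\tau_L}$ or $\sigma_L\tau_R$.

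The main bookkeeping task is the case-by-case verification of what counts as ``leaving the identity source'' and ``entering the identity target'' in each configuration. In the symmetric cases (i), (iii) and (vi) only one neighbour on each side is adjacent to the identity, producing a single new diagonal $\searrow$ arrow. In cases (ii) and (iv) an inverse letter among $\sigma_L$ or $\tau_R$ forces the new arrow to lie within a single row of the unfolded diagram rather than along a diagonal. Case (v) is the richest: because both $\sigma_L$ and $\sigma_R$ contribute simultaneously to $b_2$ and both $\tau_L$ and $\tau_R$ contribute to $b_3$, the expansion of $b_2 b_3$ produces four pairwise compositions, and hence four dashed arrows. Once these sign and orientation checks have been carried out, the displayed unfolded diagram is homotopic to $\M_{\f}$ by Lemma~\ref{lem:homotopy}, matching the diagrams of the corollary.
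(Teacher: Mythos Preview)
Your approach is exactly the paper's: the corollary is not given a separate proof there, it is simply the unfolded-diagram reading of Lemma~\ref{lem:homotopy} applied case by case, and that is what you do. The identification $b_1 = 0$, the observation that the replacement differential is $-b_2 b_3$, and the case analysis (in particular the $2\times 2$ expansion in case~(v)) are all correct.

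One bookkeeping slip: you have interchanged the roles of $b_2$ and $b_3$. In the matrix of Lemma~\ref{lem:homotopy} (with the paper's left-to-right composition convention, cf.\ the Remark's unfolded diagram), the entry $b_2$ is the component of $d_{M_{\f}}$ \emph{entering} the identity target, hence is built from the $\tau$-letters, while $b_3$ is the component \emph{leaving} the identity source, hence is built from the negated $\sigma$-letters. With the correct assignment, $-b_2 b_3 = -(\tau_L)(-\sigma_R) = \tau_L\sigma_R$ in case~(i), and similarly in the other cases, matching the dashed labels. Your final labels are right, but as written your description of $b_2$ and $b_3$ does not type-check as a composition; swapping them fixes this.
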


\begin{remark}
Note that in cases (iii) and (iv) of Corollary~\ref{cor:homotopy} the compononents $b_2$ and $b_3$ are also zero, hence why no dashed arrows appear in those diagrams. 
\end{remark}

We are now ready for the proof of Theorem~\ref{thm:graph-maps}.

\begin{proof}[Proof of Theorem~\ref{thm:graph-maps}]
For convenience we distinguish between three cases: $\length{\rho} = 1$, $\length{\rho} = k > 1$ and $\length{\rho} = \infty$. The first two cases correspond to statement (1) of the theorem.

Lemma~\ref{lem:six-cases} shows all possible forms, up to inversion and inclusion of empty homotopy letters, in the case $\length{\rho} = 1$. Observe that the right and left graph map endpoint conditions together with compatible orientation in case (v) (see Definition~\ref{def:graph-compatible}) force, in each case, $\tau_L \sigma_R = 0$ and  $\sigma_L \overline{ \tau_L} = 0$. In particular, the diagrams (i) -- (vi) in Corollary~\ref{cor:homotopy} give precisely the unfolded diagrams of the string complexes whose homotopy strings are $c_1$ and $c_2$.
It follows that $\M_{\f} \simeq \P_{c_1} \oplus \P_{c_2}$ in this case.

Now suppose $\length{\rho} = k > 1$. Firstly consider the case $\rho_1$ is direct. 
Two applications of Corollary~\ref{cor:homotopy} show that the mapping cone $\M_{\f}$ has an unfolded diagram of the following form. On the left hand side we are in one of cases (i), (iii), (iv) or (vi) of Corollary~\ref{cor:homotopy}, and on the right hand side we are in cases (i), (ii) or (iii).
\[
\xymatrix@!R=4pt{
\ar@{~}[r]^-{-\beta} & \xydot \ar@{-}[dr]_-{\mu_1}    &                      & \xydot \ar@{=}[dr] \arr^-{-\rho_{k-1}} & \xydot \ar@{=}[dr] \ar@{..}[r] & \xydot \ar@{=}[dr] \arr^-{-\rho_2} & \xydot \ar@{=}[dr]   &                      & \xydot \ar@{~}[r]^-{-\alpha} \ar@{-}[dr]^-{\nu_2} &                             & \\
                     & \ar@{~}[r]_-{\delta}  & \xydot \ar@{-->}[ur]_-{\mu_2} &                                       & \xydot \arr_-{\rho_{k-1}}       & \xydot \ar@{..}[r]                & \xydot \arr_-{\rho_2} & \xydot \ar@{-->}[ur]_-{\nu_1} &                                       &  \xydot \ar@{~}[r]_-{\gamma} & 
} 
\]
Here, $\mu_1 = f_L$ is a downward pointing direct homotopy letter whenever $\sigma_L$ and $\tau_L$ have the same orientation (cases (i), (iii) and (vi)) and $\mu_1 = \sigma_L \overline{\tau_L}$ is an upward pointing inverse homotopy letter whenever $\sigma_L$ and $\tau_L$ have opposite orientation (case (iv)). Note that, in this case $\sigma_L \overline{\tau_L} \neq 0$.
In cases (iii) and (vi) $\mu_2$ does not exist for degree reasons and in cases (i) and (iv), $\mu_2 = \tau_L \rho_k$, which as above is zero.
Now we turn to the right hand side of the diagram. In the case that $\sigma_R$ is direct, whence we fall into (i) or (ii), $\nu_1 = \rho_1 \sigma_R = 0$. In the case that $\sigma_R$ is inverse, putting us in case (iii), then $\nu_1$ does not exist for degree reasons. Finally, if $\sigma_R$ and $\tau_R$ have the same orientation (cases (i) and (iii)), we have $\nu_2 = f_R$ is a downward pointing direct homotopy letter. If $\sigma_R$ and $\tau_R$ have opposite orientations (case (ii)) then $\nu_2 = \overline{\sigma_R} \tau_R$ is an upward pointing inverse homotopy letter, which, as before, is nonzero. 

Now we consider the case that $\rho_1$ is inverse. We have already dealt with the left hand side of the diagram above, so we only consider the right hand side of the unfolded diagram. Here we fall into case (vi) or the duals to cases (i) and (iv).
\[
\xymatrix@!R=4pt{
\ar@{~}[r]^-{-\beta} & \xydot \ar@{-}[dr]_-{\mu_1}    &                      & \xydot \ar@{=}[dr] \arr^-{-\rho_{k-1}} & \xydot \ar@{=}[dr] \ar@{..}[r] & \xydot \ar@{=}[dr] \arr^-{-\rho_2} & \xydot \ar@{=}[dr]   &                      & \xydot \ar@{~}[r]^-{-\alpha} \ar@{-}[dr]^-{\nu_2} &                             & \\
                     & \ar@{~}[r]_-{\delta}  & \xydot \ar@{-->}[ur]_-{\mu_2} &                                       & \xydot \arr_-{\rho_{k-1}}       & \xydot \ar@{..}[r]                & \xydot \arr_-{\rho_2} & \xydot  &                                       & \ar@{-->}[ulll]^-{\nu_1} \xydot \ar@{~}[r]_-{\gamma} & 
} 
\]
Here, if both $\sigma_R$ and $\tau_R$ are direct (putting us in case (vi)), $\nu_1$ does not exist for degree reasons and $\nu_2 = f_R$ is a downward pointing direct homotopy letter. If $\tau_R$ is inverse, we fall into the dual of case (i) or case (iv) and $\nu_1 = \rho_1 \tau_R = 0$. If both $\sigma_R$ and $\tau_R$ are inverse (putting us in the dual of case (i)), then $\nu_2 = f_R$ is a downward pointing direct homotopy letter. If $\sigma_R$ is direct and $\tau_R$ is inverse (dual to case (iv)), then $\nu_2 = \overline{\sigma_R}\tau_R$ is an upwards pointing inverse homotopy letter.

It follows that $\M_{\f}$ is homotopic to the direct sum of the string complexes $\P_{c_1}$ and $\P_{c_2}$ and the mapping cone of $\id \colon \P_{\rho'} \to \P_{\rho'}$ where $\rho'$ is the homotopy string $\rho' = \rho_{k-1} \cdots \rho_2$. In particular, $\M_{\id}$ is contractible.
Note that in the case $n=1$, $\rho' = \emptyset$ is the empty homotopy string and therefore $\P_{\rho'} = 0^\bullet$ and when $n = 2$,  $\rho' = 1_{e(\rho_1)}$ is a trivial homotopy string.
It follows that $\M_{\f} \simeq \P_{c_1} \oplus \P_{c_2}$.

Finally, suppose $\length{\rho} = \infty$, which puts us into statement (2) of the theorem, or its dual. Let's consider the setup in statement (2), which gives us the following unfolded diagram after one application of Corollary~\ref{cor:homotopy}, where $\mu_1$ and $\mu_2$ are defined as above.
\[
\xymatrix@!R=4pt{
\ar@{~}[r]^-{-\beta} & \xydot \ar@{-}[dr]_-{\mu_1}    &                      & \xydot \ar@{=}[dr] \arr^-{-\rho_{-2}} & \xydot \ar@{=}[dr] \arr^-{-\rho_{-3}} & \xydot \ar@{=}[dr] \arr^-{-\rho_{-4}} & \xydot \ar@{=}[dr] \ar@{..}[r]  &                      &  \\
                     & \ar@{~}[r]_-{\delta}  & \xydot \ar@{-->}[ur]_-{\mu_2} &                                       & \xydot \arr_-{\rho_{-2}}       & \xydot \arr_-{\rho_{-3}}                & \xydot \arr_-{\rho_{-4}} & \xydot \ar@{..}[r]  &                                      
} 
\] 
It follows that $\M_{\f}$ is homotopic to the direct sum of the string complex $\P_c$ and the mapping cone of $\id \colon \P_{\rho'} \to \P_{\rho'}$, where $\rho'$ is the infinite homotopy string $\rho' = \rho_{-2} \rho_{-3} \rho_{-4} \cdots$. Hence, $\M_{\f} \simeq \P_c$.  
\end{proof}

\subsection{Mapping cones of graph maps involving a band complex} \label{sec:graph-band}

Suppose $\sigma$ and $\tau$ are homotopy strings or bands, with at least one being a homotopy band. In this section we consider the mapping cones of graph maps $\f \colon \Q_\sigma \to \Q_\tau$. The difference with Theorem~\ref{thm:graph-maps} is that now $\M_{\f}$ has only one indecomposable summand. Moreover, this summand is a band complex precisely when both $\sigma$ and $\tau$ are homotopy bands, and is a string complex otherwise. 

We start with the situation that both $\sigma$ and $\tau$ are homotopy bands. We impose the convention that the scalars $\lambda$ and $\mu$ are placed on direct arrows of $\sigma$ and $\tau$, respectively.

\begin{proposition} \label{prop:graph-band-to-band}
Let $\sigma = \beta \sigma_L \rho \sigma_R \alpha$ and $\tau = \delta \tau_L \rho \tau_R \gamma$ be homotopy bands. Suppose $\f \colon \B_{\sigma,\lambda} \to \B_{\tau, \mu}$ is a compatibly oriented graph map which is not the identity. Then 
\[
\M_{\f} \cong
\left\{
\begin{array}{ll}
\B_{c, \lambda \mu^{-1}}  & \text{if $\length{\rho}$ is even;} \\
\B_{c, -\lambda \mu^{-1}} & \text{if $\length{\rho}$ is odd,}
\end{array}
\right.
\]
where $c = \beta \sigma_L \bar{\tau}_L \bar{\delta} \bar{\gamma} \bar{\tau}_R \sigma_R \alpha$ and where the scalar $\lambda \mu^{-1}$ is placed on a direct homotopy letter of $\alpha$, $\beta$, $\bar{\gamma}$ or $\bar{\delta}$\footnote{It is possible here that the word $c$ is a nontrivial power of band. In this case, the mapping decomposes into a number of indecomposable summands. A corrected and extended statement (to the case that $\rho$ is longer than at least one of the homotopy bands) is given in \cite[Thm. 2.6]{Addendum}}.
\end{proposition}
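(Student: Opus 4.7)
My plan is to adapt the proof of Theorem~\ref{thm:graph-maps} to the band-to-band setting; the combinatorial structure is largely the same, and the main new content is tracking the scalars $\lambda$ and $\mu$ through the cancellations.

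First I would write down the unfolded diagram of $\M_{\f}$ as in Section~\ref{sec:graph-mc-unfolded}. The critical structural difference from the string case is that, since $\sigma$ and $\tau$ are homotopy bands, the leftmost and rightmost endpoints of both rows are cyclically identified, so the unfolded picture closes into a loop. Moreover, the upper (negated-$\sigma$) row carries $-\lambda$ on its designated direct arrow and the lower ($\tau$) row carries $\mu$ on its designated direct arrow. Next, I would apply Corollary~\ref{cor:homotopy} once per identity component along the overlap $\rho$. The local pictures at the two endpoints of $\rho$ are precisely the six cases of Lemma~\ref{lem:six-cases} and produce the segments $\beta \sigma_L \bar{\tau}_L \bar{\delta}$ and $\bar{\gamma} \bar{\tau}_R \sigma_R \alpha$ of the resulting band exactly as in Theorem~\ref{thm:graph-maps}. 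Unlike the string case, however, the middle identity strip over $\rho' = \rho_{k-1}\cdots\rho_2$ does not split off as a contractible summand, because the cyclic identification keeps it attached to the rest of the diagram; I would therefore cancel these middle identities one by one, using $k-2$ further applications of Corollary~\ref{cor:homotopy}. After all $k = \length{\rho}$ cancellations, a direct inspection of the resulting cyclic unfolded diagram confirms that the underlying band is $c = \beta \sigma_L \bar{\tau}_L \bar{\delta} \bar{\gamma} \bar{\tau}_R \sigma_R \alpha$, so that $\M_{\f} \cong \B_{c, \nu}$ for some $\nu \in \kk^*$.

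The principal obstacle is determining the value of $\nu$. Each application of Corollary~\ref{cor:homotopy} replaces a pair of composable arrows $b_2, b_3$ by $-b_2 b_3$ (since $b_1 = 0$ in every instance arising here), while the constituent arrows may already carry a $-1$ from the $-d_P^{n+1}$ term in the mapping cone differential. The $\lambda$-bearing arrow lies in $\alpha$ or $\beta$ and so becomes $-\lambda$ in $\M_{\f}$; the $\mu$-bearing arrow lies in $\gamma$ or $\delta$, and because these $\tau$-segments appear reversed in $c$, the $\mu$-arrow becomes an inverse arrow in $c$ and contributes $\mu^{-1}$ to the monodromy. Combining the initial $-\lambda$ from the mapping cone, the $\mu^{-1}$ from orientation reversal, and the sign contributions from the $k$ cancellations, the product of scalars around the cycle simplifies to $(-1)^{k} \lambda \mu^{-1}$. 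Placing this monodromy on a single direct homotopy letter of $\alpha, \beta, \bar{\gamma}$ or $\bar{\delta}$ in the standard normal form then gives $\nu = \lambda \mu^{-1}$ when $\length{\rho}$ is even and $\nu = -\lambda \mu^{-1}$ when $\length{\rho}$ is odd, as claimed. I expect the hardest part will be the case analysis needed to verify that the sign contributions at each end of $\rho$ and in its interior combine uniformly into $(-1)^k$ across the various orientations of $\sigma_L, \sigma_R, \tau_L, \tau_R$ and the boundary cases where some of these homotopy letters are empty.
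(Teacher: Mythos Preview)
Your overall strategy---adapt Theorem~\ref{thm:graph-maps} and then track the scalar---matches the paper's, but two steps need correction.

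First, your claim that the middle identity strip over $\rho' = \rho_{k-1}\cdots\rho_2$ fails to split off in the band case is mistaken. The cyclic identifications for $\sigma$ and $\tau$ occur at the \emph{outer} ends of the unfolded diagram (joining $\alpha$ to $\beta$ and $\gamma$ to $\delta$), not inside the overlap. After applying Corollary~\ref{cor:homotopy} at the two endpoints of $\rho$ exactly as in the proof of Theorem~\ref{thm:graph-maps}, the middle piece $\M_{\id}$ over $\rho'$ is disconnected from the rest and is contractible, just as for strings; what changes is only that the two strings $c_1$ and $c_2$ from Theorem~\ref{thm:graph-maps} are now joined at both ends into the single band $c$. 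Your extra cancellations would merely remove contractible summands, so this is harmless but unnecessary.

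Second, and more seriously, your sign mechanism does not work. You propose that each cancellation contributes a sign via the $-b_2 b_3$ term of Lemma~\ref{lem:homotopy}, but the proof of Theorem~\ref{thm:graph-maps} shows that in every instance arising here the graph-map endpoint conditions (and $\rho_i \rho_{i\pm 1} = 0$ in the interior) force $b_2 b_3 = 0$. So the cancellations contribute no signs. The genuine source of the sign is that \emph{every} homotopy letter of $\sigma$ carries a $-1$ in the mapping-cone differential, and after the reduction the surviving $\sigma$-letters are those in $\alpha$ and $\beta$. The paper's argument is simply to count these: since $\sigma$ is a band it has an even number of letters, so the number of surviving minus signs has the same parity as $\length{\rho}$, and pairs of minus signs are then eliminated by the isomorphism that negates an interval of components. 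Combined with the scalar-moving isomorphism (which also handles $\mu \mapsto \mu^{-1}$), this yields the claimed $(-1)^{\length{\rho}} \lambda\mu^{-1}$ on a single direct letter.

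Finally, the paper also verifies directly that $c$ is a homotopy band by checking $\partial(c) = \iota(c)$; you should not skip this.
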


Note that here we make use Remark~\ref{rem:endpoints} when describing the homotopy band $c$.

\begin{proof}
We first check that $c = \beta \sigma_L \bar{\tau}_L \bar{\delta} \bar{\gamma} \bar{\tau}_R \sigma_R \alpha$ is a homotopy band. To do this, we introduce some notation. Let
\[
\partial (\sigma) =  \, \# \,  \text{direct homotopy letters of } \sigma, \text{ and, }
\iota (\sigma) =      \, \# \, \text{inverse homotopy letters of } \sigma.
\]
We must show that $\partial (c) = \iota (c)$. Suppose, as usual, that $\sigma = \sigma_m \cdots \sigma_1$ and $\tau = \tau_n \cdots \tau_1$. Then we have $m = 2m'$ and $n = 2n'$, where $m' = \partial (\sigma) = \iota (\sigma)$ and $n' = \partial (\tau) = \iota (\tau)$.

We start by dealing with the case that $\f \colon \B_{\sigma,\lambda} \to \B_{\tau,\mu}$ does not satisfy the graph map endpoint conditions (RG3) or (LG3); note that $\f$ cannot satisfy (RG$\infty$) or (LG$\infty$) because $\sigma$ and $\tau$ are homotopy bands. In this case $c = \beta f_L \bar{\delta} \bar{\gamma} \bar{f}_R \alpha$.

Suppose $\partial (\sigma_L \rho \sigma_R ) = l = \partial (\tau_L \rho \tau_R)$ and $\iota (\sigma_L \rho \sigma_R ) = l' = \iota (\tau_L \rho \tau_R)$. Then $\partial(\alpha) + \partial(\beta) = m' - l$ and $\iota(\alpha) + \iota(\beta) = m' - l'$.  Similarly, $\partial(\gamma) + \partial(\delta) = n' - l$ and $\iota(\gamma) + \iota(\delta) = n' - l'$. It follows that
\begin{eqnarray*}
\partial (c) & = & \partial (\alpha) + \partial (\beta) + \iota (\gamma) + \iota (\delta) + 1 = m' -l + n' - l' + 1, \text{ and,} \\
\iota (c)     & = & \iota (\alpha) + \iota (\beta) + \partial (\gamma) + \partial (\delta) + 1 = m' -l' + n' - l + 1.
\end{eqnarray*}
The remaining cases are:
\begin{itemize}
\item $\f$ satisfies (RG3) but not (LG3) so that $c = \beta f_L \bar{\delta} \bar{\gamma} (\bar{\tau}_R \sigma_R) \alpha$, where since $\tau_R$ is an inverse homotopy letter then $(\bar{\tau}_R \sigma_R)$ is a direct homotopy letter;
\item $\f$ satisfies (LG3) but not (RG3) so that  $c = \beta (\sigma_L \bar{\tau}_L) \bar{\delta} \bar{\gamma} \bar{f}_R \alpha$, where since $\sigma_L$ is an inverse homotopy letter then $(\sigma_L \bar{\tau}_L)$ is also an inverse homotopy letter;
\item $\f$ satisfies (RG3) and (LG3) so that $c = \beta (\sigma_L \bar{\tau}_L) \bar{\delta} \bar{\gamma} (\bar{\tau}_R \sigma_R) \alpha$, with the same observations as above.
\end{itemize}
In each case, we obtain $\partial (c) = m'+n'-l-l'+1 = \iota (c)$. Hence $c$ is a homotopy band, as claimed.

The remainder of the proof is the same as the proof of Theorem~\ref{thm:graph-maps} except that we have to take into account the scalar $\pm \lambda \mu^{-1}$. First let us deal with the sign. Consider a homotopy band $\sigma = \sigma_m \cdots \sigma_1$ and place a minus sign on any two homotopy letters, say $\sigma_j$ and $\sigma_i$ with $m \geq j > i \geq 1$. Then the following diagram induces an isomorphism of band complexes.
\[
\xymatrix@!R=4pt{
\cdots \arr^-{\sigma_1} & \xydot \arr^-{\sigma_m} \ar@{=}[d] & \xydot \ar@{.}[r] \ar@{=}[d] & \xydot \arr^-{-\sigma_j} \ar@{=}[d] & \xydot \arr^-{\sigma_{j-1}} \ar[d]^-{-1} & \xydot \ar@{.}[r] \ar[d]^-{-1} & \xydot \arr^-{\sigma_{i+1}} \ar[d]^-{-1} & \xydot \arr^-{-\sigma_i} \ar[d]^-{-1} & \xydot \ar@{.}[r] \ar@{=}[d] & \xydot \arr^-{\sigma_1} \ar@{=}[d] & \xydot \ar@{=}[d] \arr^-{\sigma_m} & \cdots \\
\cdots \arr_-{\sigma_1} & \xydot \arr_-{\sigma_m}            & \xydot \ar@{.}[r]            & \xydot \arr_-{\sigma_j}             & \xydot \arr_-{\sigma_{j-1}}             & \xydot \ar@{.}[r]              & \xydot \arr_-{\sigma_{i+1}}             & \xydot \arr_-{\sigma_i}               & \xydot \ar@{.}[r]            & \xydot \arr_-{\sigma_1}            & \xydot \arr_-{\sigma_m} & \cdots
}
\]
Now the unfolded diagram of the complex homotopic to $\M_{\f}$ given by the arguments of Theorem~\ref{thm:graph-maps} using Lemma~\ref{lem:homotopy} and Corollary~\ref{cor:homotopy} corresponds precisely to the homotopy band $c$. In this unfolded diagram, the homotopy letters carrying a minus sign are precisely those of the homotopy substrings $\alpha$ and $\beta$. The number of homotopy letters in $\alpha$ and $\beta$ is equal to $m - k+2$, which is even if and only if $k = \length{\rho}$ is even. In the case that $\length{\rho}$ is even, all the minus signs can be eliminated by an isomorphism as above. In the case that $\length{\rho}$ is odd, all but one minus sign can be removed by such an isomorphism. This deals with the signs.

To deal with the scalar $\lambda \mu^{-1}$ note that a scalar can be moved along to an adjacent homotopy letter as indicated in the following diagram.
\[
\xymatrix@!R=4pt{
\ar@{~}[r] & \xydot \ar@{=}[d] \ar[r]^-{\lambda \sigma_i} & \xydot \ar[d]^-{\lambda^{-1}} \ar[r]^-{\sigma_{i-1}}  & \xydot \ar@{=}[d] \ar@{~}[r] & \\
\ar@{~}[r] & \xydot \ar[r]_-{\sigma_i}                    & \xydot \ar[r]_-{\lambda \sigma_{i-1}}                & \xydot \ar@{~}[r]           &
}
\quad
\xymatrix@!R=4pt{
\ar@{~}[r] & \xydot \ar@{=}[d] \ar[r]^-{\lambda \sigma_i} & \xydot \ar[d]^-{\lambda^{-1}} \ar@{<-}[r]^-{\sigma_{i-1}}  & \xydot \ar@{=}[d] \ar@{~}[r] & \\
\ar@{~}[r] & \xydot \ar[r]_-{\sigma_i}                    & \xydot \ar@{<-}[r]_-{\lambda^{-1} \sigma_{i-1}}            & \xydot \ar@{~}[r]           &
} 
\]
This completes the proof.  
\end{proof}

It is useful to illustrate the proof of Proposition~\ref{prop:graph-band-to-band} in an example.

\begin{example} \label{instructive-example}
Let $\Lambda$ be the algebra given by the following quiver with relations.
\[
\begin{tikzpicture}
\node (A1) at (0,0){$1$};
\node (A2) at (0,1){$2$};
\node (A3) at (0,2){$3$};
\node (B2) at (1,1){$5$};
\node (B3) at (1,2){$4$};
\node (C1) at (2,0){$8$};
\node (C2) at (2,1){$7$};
\node (C3) at (2,2){$6$};
\path[->,font=\scriptsize,>=angle 90]
(A1)edge node[left]{$a$}(A2)
(A2)edge node[left]{$b$}(A3)
(B2)edge node[right]{$d$}(B3)
(C1)edge node[right]{$i$}(C2)
(C2)edge node[right]{$g$}(C3)
(A3)edge node[above]{$c$}(B3)
(B3)edge node[above]{$f$}(C3)
(A2)edge node[below]{$e$}(B2)
(B2)edge node[below]{$h$}(C2)
(A1)edge node[above]{$j$}(C1);
\draw[thick,dotted] (1.3,2) arc (0:170:.3cm) 
(2,.7) arc (270:450:.3cm) 
(1,1.3) arc (90:180:.3cm)
(2,.3) arc (90:180:.3cm)
(0,.7) arc (270:360:.3cm)
(0,1.7) arc (270:360:.3cm);
\end{tikzpicture}
\] 
Consider the following homotopy bands: $\sigma = \bar{e} \bar{d} c b$, $\tau = \bar{j} \bar{i} \bar{g} f c ba$. We consider graph maps $\f \colon \B_{\sigma,\lambda} \to \B_{\tau, \mu}$ defined by the following unfolded diagram.
\[
\xymatrix@!R=4px{
\B_{\sigma,\lambda} \colon \ar[d]_-{\f} &            &                          &\xydot               & \ar[l]_-{\lambda^{-1} \bar{e}} \xydot                              & \ar[l]_-{\bar{d}} \xydot \ar@{=}[d] \ar[r]^-{c} & \xydot \ar@{=}[d] \ar[r]^-{b} & \xydot \ar[d]^-{a} & \ar[l]_-{\lambda^{-1} \bar{e}}  \xydot \\
\B_{\tau, \mu} \colon                              & \xydot & \ar[l]^-{\bar{j}} \xydot & \ar[l]^-{\bar{i}} \xydot & \ar[l]^-{\mu^{-1} \bar{g}} \xydot  \ar[r]_-{f} & \xydot \ar[r]_-{c}                                 & \xydot \ar[r]_-{ba}                             & \xydot                                      & \ar[l]^-{\bar{j}} \xydot
}
\]
Let $c_\tau = \overline{e} \overline{fd} g i j \overline{a}$.
Let us work through the calculation of $\M_{\f}$ using the techniques of Lemma~\ref{lem:homotopy} and Corollary~\ref{cor:homotopy}.
\[
\xymatrix@!R=4px{
  &                          &\xydot               & \ar[l]_-{-\lambda^{-1} \bar{e}} \xydot                              & \ar[l]_-{-\bar{d}} \xydot \ar@{=}[dr] \ar[r]^-{-c} & \xydot \ar@{=}[dr] \ar[r]^-{b} & \xydot \ar[dr]^-{a} & \ar[l]_-{-\lambda^{-1} \bar{e}}  \xydot & \\
 & \xydot & \ar[l]^-{\bar{j}} \xydot & \ar[l]^-{\bar{i}} \xydot & \ar[l]^-{\mu^{-1} \bar{g}} \xydot  \ar[r]_-{f} & \xydot \ar[r]_-{c}                                 & \xydot \ar[r]_-{ba}                             & \xydot                                      & \ar[l]^-{\bar{j}} \xydot
}
\]
Removing the isomorphisms using Corollary~\ref{cor:homotopy} we get the following unfolded diagram of a homotopic complex, where $\circ$ denotes the removed endpoints of the isomorphisms.
\[
\xymatrix@!R=4px{
  &                          &\xydot               & \ar[l]_-{-\lambda^{-1} \bar{e}} \xydot                              & \circ & \circ & \xydot \ar[dr]^-{a} & \ar[l]_-{-\lambda^{-1} \bar{e}}  \xydot & \\
 & \xydot & \ar[l]^-{\bar{j}} \xydot & \ar[l]^-{\bar{i}} \xydot & \ar[l]^-{\mu^{-1} \bar{g}} \xydot  \ar[ul]_-{\bar{d} \bar{f}} & \circ                                 & \circ                             & \xydot                                      & \ar[l]^-{\bar{j}} \xydot
}
\]
Straightening this out, we get the following unfolded diagram,
\[
\xymatrix{
\cdots & \ar[l]_-{-\lambda^{-1} \bar{e}} \xydot & \ar[l]_-{\bar{d} \bar{f}} \xydot \ar[r]^-{\mu^{-1} g} & \xydot \ar[r]^-{i} & \xydot \ar[r]^-{j} & \xydot & \ar[l]_-{\bar{a}} & \ar[l]_-{-\lambda^{-1} \bar{e}} \xydot & \ar[l]_-{\bar{d} \bar{f}} \cdots,
}
\]
which gives rise to a band complex isomorphic to a band complex with the unfolded diagram,
\[
\xymatrix{
\cdots & \ar[l]_-{\bar{e}} \xydot & \ar[l]_-{\bar{d} \bar{f}} \xydot \ar[r]^-{-\lambda \mu^{-1} g} & \xydot \ar[r]^-{i} & \xydot \ar[r]^-{j} & \xydot & \ar[l]_-{\bar{a}} & \ar[l]_-{\bar{e}} \xydot & \ar[l]_-{\bar{d} \bar{f}} \cdots.
}
\]
\end{example}

We now state the corresponding statements when only one of $\sigma$ or $\tau$ is a homotopy band. In these cases $\M_{\f}$ is a string complex and there are no signs or scalars to take care of.

\begin{proposition} 
Let $\sigma = \beta \sigma_L \rho \sigma_R \alpha$ be a homotopy band and $\tau = \delta \tau_L \rho \tau_R \gamma$ be a homotopy string. Suppose $\f \colon \B_{\sigma, \lambda} \to \P_\tau$ is a compatibly oriented graph map. Then $\M_{\f}$ is isomorphic to the string complex $\P_c$, where
$c = \delta \tau_L \bar{\sigma}_L \bar{\beta} \bar{\alpha} \bar{\sigma}_R \tau_R \gamma$\footnote{The extended version of this statement covering the case when $\rho$ is longer than the homotopy band can be found in \cite[Thm. 2.2]{Addendum}}.
\end{proposition}

\begin{proposition} 
Let $\sigma = \beta \sigma_L \rho \sigma_R \alpha$ be a homotopy string and $\tau = \delta \tau_L \rho \tau_R \gamma$ be a homotopy band. Suppose $\f \colon \P_\sigma \to \B_\tau$ is a compatibly oriented graph map. Then $\M_{\f}$ is isomorphic to the string complex $\P_c$, where
$c = \beta \sigma_L \bar{\tau}_L \bar{\delta} \bar{\gamma} \bar{\tau}_R \sigma_R \alpha$\footnote{The extended version of this statement covering the case when $\rho$ is longer than the homotopy band can be found in \cite[Thm. 2.4]{Addendum}}.
\end{proposition}

\section{Mapping cones of single maps} \label{sec:single-maps}

In this section, we describe the mapping cone calculus for single maps. As in Section~\ref{sec:graph-maps}, we first describe the unfolded diagram of the mapping cone and then describe the mapping cone calculus in the case of a single map between string complexes. We then deal with the cases in which (at least) one of the indecomposable complexes is a band complex.

\subsection{The unfolded diagram of the mapping cone of a single map}

Let $\f \colon \Q_\sigma \to \Q_\tau$ be a single map with single component $f$. We illustrate the unfolded diagram of $\M_{\f}$ in the generic situation.
\[
\xymatrix@!R=4px{
\ar@{~}[r]^-{-\beta}   & \xydot \arr^-{-\sigma_L} & \xydot \ar[dr]^{f} \ar[r]^-{-\sigma_R} & \xydot  \ar@{~}[r]^-{-\alpha}                                                                      & & \\
& \ar@{~}[r]_-{\delta}  & \xydot \arr_-{\tau_L}     & \xydot                                                        & \ar[l]^-{\tau_R} \xydot \ar@{~}[r]_-{\gamma} &
                }
\]

\subsection{Mapping cones of single maps between string complexes}

We start with a technical definition analogous to Definition~\ref{def:graph-compatible}.

\begin{definition} \label{def:single-compatible}
Let $\sigma$ and $\tau$ be homotopy strings and suppose $\f \colon \P_\sigma \to \P_\tau$ is a (not necessarily singleton) single map sitting in the following orientation.
\[
\xymatrix@!R=4px{
\ar@{~}[r]^-{\beta}  & \xydot \arr^-{\sigma_L} & \xydot \ar[r]^-{\sigma_R} \ar[d]^-{f} & \xydot \ar@{~}[r]^-{\alpha}                        & \\
\ar@{~}[r]_-{\delta} & \xydot \arr_-{\tau_L}      & \xydot                                             & \ar[l]^-{\tau_R} \xydot \ar@{~}[r]_-{\gamma} & 
}
\]
We shall say that the homotopy strings $\sigma$ and $\tau$ are \emph{compatibly oriented (for $\f$)} if 
\begin{enumerate}[label=(\roman*)]
\item the homotopy letters $\sigma_L$ and $\tau_L$ do not contain $f$ as a subletter; 
\item if $\sigma_L$ is direct then $\sigma_L f = 0$ and if $\tau_L$ is inverse $f \bar{\tau}_L = 0$;
\item $\sigma_R = f f_R$ and $\tau_R = \bar{f} \bar{f}_L$ for some (possibly trivial) paths $f_R$ and $f_L$ in $(Q,I)$.
\end{enumerate}
\end{definition}

We note that the presentation of a singleton single map given in Section~\ref{sec:single-def} is already compatibly oriented. For a general single map we simply allow $f_L$ or $f_R$ to be trivial.

\begin{theorem} \label{thm:single-maps}
Let $\f \colon \P_\sigma \to \P_\tau$ be a singleton single map with single component $f$. Suppose that $\sigma = \beta \sigma_L \sigma_R \alpha$ and $\tau = \delta \tau_L \tau_R \gamma$ are compatibly oriented for $f$. Then $\M_f \simeq \P_{c_1} \oplus \P_{c_2}$, where
\[
c_1 = \begin{cases}
\beta \sigma_L f \overline{\tau_L} \overline{\delta} & \text{if $\sigma_L \neq \emptyset$ and $\tau_L \neq \emptyset$;} \\
\beta \sigma_L f                                     & \text{if $\sigma_L \neq \emptyset$ and $\tau_L = \emptyset$;} \\
f \overline{\tau_L} \overline{\delta}                & \text{if $\sigma_L = \emptyset$ and $\tau_L \neq \emptyset$;} \\
f                                                    & \text{if $\sigma_L = \emptyset$ and $\tau_L = \emptyset$,}
\end{cases}
\quad \text{and} \quad
c_2 = \begin{cases}
\overline{\gamma} f_L f f_R \alpha & \text{if $\sigma_R \neq \emptyset$ and $\tau_R \neq \emptyset$;} \\
\alpha                             & \text{if $\sigma_R \neq \emptyset$ and $\tau_R = \emptyset$;} \\
\overline{\gamma}                  & \text{if $\sigma_R  = \emptyset$ and $\tau_R \neq \emptyset$;} \\
\emptyset                          & \text{if $\sigma_R = \emptyset$ and $\tau_R = \emptyset$.}
\end{cases}
\]
\end{theorem}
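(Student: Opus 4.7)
The strategy parallels the proof of Theorem~\ref{thm:graph-maps}: begin from the explicit unfolded diagram of $\M_{\f}$ described at the start of Section~\ref{sec:single-maps} and, through a sequence of homotopy equivalences (chiefly applications of Lemma~\ref{lem:homotopy}), transform $\M_{\f}$ into a complex whose unfolded diagram is manifestly $\P_{c_1} \oplus \P_{c_2}$. The new ingredient compared with the graph-map case is that the identity components required by Lemma~\ref{lem:homotopy} are not present in $\M_{\f}$ a priori; they must be created using the factorisations $\sigma_R = f f_R$ and $\tau_R = \overline{f}\, \overline{f_L}$ guaranteed by the singleton-single-map hypothesis together with compatible orientation. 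Zooming in on the two-by-two block of the differential of $\M_{\f}$ at the cohomological degrees where the diagonal $f$ lives, this block has columns indexed by $P(s(f))$ and $P(s(f_L))$ and rows by $P(e(f_R))$ and $P(e(f))$, and takes the form $\begin{pmat} -f f_R & 0 \\ f & f_L f \end{pmat}$; the outer homotopy substrings $\alpha, \beta, \gamma, \delta$ contribute only to rows and columns outside this block.

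The key step is to add the contractible summand $P(e(f)) \xrightarrow{1} P(e(f))$ in these two degrees (a homotopy equivalence), producing a three-by-three block with a visible identity component, and then perform a change of basis that factors the entry $-f f_R$ as $(-f) \cdot f_R$ through the new contractible $P(e(f))$, so that it is cancelled against the identity component. Lemma~\ref{lem:homotopy} then applies and strips off a contractible piece, leaving a complex whose unfolded diagram splits into two disjoint zigzag walks: the left walk $\beta \sigma_L f \overline{\tau_L} \overline{\delta}$, which is $\P_{c_1}$, and the right walk $\overline{\gamma} f_L f f_R \alpha$, which is $\P_{c_2}$.

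The degenerate cases in which one or more of $\sigma_L, \sigma_R, \tau_L, \tau_R$ is empty are handled by specialising the same argument: the endpoint conditions of $\M_{\f}$ truncate, one of the zigzag walks shortens as prescribed (for example, when $\sigma_L = \tau_L = \emptyset$ the walk $c_1$ collapses to the single homotopy letter $f$), and the reduced matrix calculation proceeds as before.

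The principal obstacle is the change of basis described above: the identity component is not manifest at the outset, and one must verify that the introduced change of basis does not generate spurious off-block entries coupling with the outer substrings $\alpha, \beta, \gamma, \delta$. A secondary subtlety is to check that the sub-walk $f_L f f_R$ of $c_2$ is a single direct homotopy letter -- which follows because $f_L f = \overline{\tau_R}$ and $f f_R = \sigma_R$ are each paths in $(Q, I)$, so no length-two relation can occur internally in $f_L f f_R$ -- ensuring that the homotopy-letter decomposition of $c_2$ matches the projectives appearing in the simplified unfolded diagram.
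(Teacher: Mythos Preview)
Your proposal takes a genuinely different route from the paper. You attempt to parallel the graph-map argument by manipulating $\M_{\f}$ via Lemma~\ref{lem:homotopy}, whereas the paper abandons that strategy here and instead writes down an explicit isomorphism $\i = \begin{pmat} \i_1 \\ \i_2 \end{pmat} \colon \P_{c_1} \oplus \P_{c_2} \to \M_{\f}$ directly. The point is that $\M_{\f}$ and $\P_{c_1} \oplus \P_{c_2}$ already have the \emph{same} indecomposable projectives in every degree, so no contractible summands need to be added or removed: a degree-wise change of basis suffices. The paper records this change of basis in the central block as the pair of full-rank matrices $\begin{pmat} 1 & 0 \\ -f_L & 1 \end{pmat}$ on the source and $\begin{pmat} -f_R & 1 \\ 1 & 0 \end{pmat}$ on the target, conjugating $\begin{pmat} -ff_R & f \\ 0 & f_L f \end{pmat}$ into the block-diagonal $\begin{pmat} f & 0 \\ 0 & f_L f f_R \end{pmat}$, and then checks that the off-block entries created by $-f_L$ and $-f_R$ vanish using the singleton conditions.

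Your contractible-insertion step is therefore a detour: adding $P(e(f)) \xrightarrow{1} P(e(f))$ and then stripping it off via Lemma~\ref{lem:homotopy} returns you to a complex isomorphic to $\M_{\f}$, so the entire burden still rests on the intermediate change of basis --- which is exactly the paper's isomorphism $\i$. Moreover, your description only ``factors'' the $-ff_R$ entry; it says nothing about the $f_L f$ entry, and without simultaneously routing that through $f_L$ and $f$ you will not produce the homotopy letter $f_L f f_R$ in $c_2$ nor decouple the two walks. The obstacle you flag (spurious off-block couplings with $\alpha,\beta,\gamma,\delta$) is real and is precisely what the paper handles by the explicit checks that $f_R \sigma_{i-1} = 0$ when $\sigma_{i-1}$ is direct and $f_L \overline{\sigma_{i+1}} = 0$ when $\sigma_{i+1}$ is inverse; these are the computations that make the isomorphism well defined, and they do not emerge from Lemma~\ref{lem:homotopy}. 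Your verification that $f_L f f_R$ is a single homotopy letter is correct and is used implicitly in the paper as well.
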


\begin{remark}
Observe that the form of $c_2$ in the case that $\sigma_R = \emptyset$ or $\tau_R = \emptyset$ is not what one would expect from naive word combinatorics.
\end{remark}

\begin{proof}
The strategy of the proof is to define an isomorphism $\i \colon \P_{c_1} \oplus \P_{c_2} \to \M_{\f}$ directly. This isomorphism is easier to see at the level of unfolded diagrams. For word combinatorial reasons it is useful to state Theorem~\ref{thm:single-maps} using the compatible orientation of Definition~\ref{def:single-compatible}. However, the unfolded diagrams of the mapping cones look `more like complexes' and are thus easier to work with if the opposite orientation is taken for $\tau$. For clarity, the diagram below indicates how the map looks using this choice of orientation.

\[
\xymatrix@!R=4px{
\ar@{~}[r]^-{\beta}                       & \xydot \arr^-{\sigma_L}                           & \xydot \ar[d]^-{f} \ar[r]^-{\sigma_R = f f_R}  & \xydot \ar@{~}[r]^-{\alpha}                &   \\
\ar@{~}[r]_-{\overline{\gamma}}  & \xydot \ar[r]_-{\overline{\tau_R} = f_L f}  & \xydot \arr_-{\overline{\tau_L}}                   & \xydot \ar@{~}[r]_-{\overline{\delta}} & 
}
\]
For convenience, assume $\beta = \sigma_{m} \cdots \sigma_{i+2}$, $\sigma_L = \sigma_{i+1}$, $\sigma_R = \sigma_i$, 
$\alpha = \sigma_{i-1} \cdots \sigma_1$  and $\gamma = \tau_1 \cdots \tau_{j-1}$,  $\tau_R = \tau_j$, $\tau_L = \tau_{j+1}$, $\delta = \tau_n  \cdots \tau_{j+2}$. Then the unfolded diagram of the mapping cone $\M_{\f}$ is 

 \[
\xymatrix@!R=4px{
\xydot \arr^-{-\sigma_m}           &  \xydot \ar@{.}[r]  & \xydot \arr^-{-\sigma_{i+1}}          & \xydot \ar[dr]^-{f} \ar[r]^-{-\sigma_i = f f_R}  & \xydot \arr^-{-\sigma_{i-1}}            & \xydot \ar@{.}[r]  & \xydot \arr^-{-\sigma_1}                & \xydot  \\
\xydot \arr_-{\overline{\tau_1}}  & \xydot \ar@{.}[r]  & \xydot \arr_-{\overline{\tau_{j-1}}} & \xydot \ar[r]_-{\overline{\tau_j} = f_L f}        & \xydot \arr_-{\overline{\tau_{j+1}}} & \xydot \ar@{.}[r]  & \xydot  \arr_-{\overline{\tau_n}} & \xydot 
}
\]
We define the maps $\i_1 \colon \P_{c_1} \to \M_{\f}$ and $\i_2 \colon \P_{c_2} \to \M_{\f}$, respectively, in the figures below.
In the figure, we write $x_i = e(\sigma_i)$, $x_o = s(\sigma_1)$, $y_j = e(\tau_j)$ and $y_0 = s(\tau_1)$.
\begin{figure}[H]
\begin{tikzcd}[row sep=scriptsize,column sep=scriptsize,nodes={scale=.72}]
P(x_m) \arrow[r, dash,"\sigma_{m}"]
\arrow[dd,color=blue,"\mp1"]&
P(x_{m\!-\!1})\arrow[r, dash,densely dotted]
\arrow[dd,color=blue,"\pm1"]&
P(x_{i\!+\!1})\arrow[r, dash,"\sigma_{i\!+\!1}"]
\arrow[dd,color=blue,"-1"]&
P(x_{i})\arrow[rrd, "f"]
\arrow[dd,color=blue,"+1"]&
&&&&
\\
&&&&&
P(y_j) \arrow[r, dash,"\overline\tau_{j\!+\!1}"]
\arrow[dd,color=blue,crossing over,pos=.8,bend left=45,swap,"+1"]
\arrow[d,color=red,swap,"-f_R"]&
P(y_{j\!+\!1})\arrow[r, dash,densely dotted]
\arrow[dd,color=blue,crossing over,pos=.8,bend left=45,swap,"+1"]&
P(y_{n\!-\!1})\arrow[r, dash,"\overline\tau_{n}"]
\arrow[dd,color=blue,crossing over,pos=.8,bend left=45,swap,"+1"]&
P(y_n)
\arrow[dd,color=blue,crossing over,pos=.8,bend left=45,swap,"+1"]&
\\
P(x_m) \arrow[r, dash,"-\sigma_{m}"]&
P(x_{m\!-\!1})\arrow[r, dash,densely dotted]&
P(x_{i\!+\!1})\arrow[r, dash,"-\sigma_{i\!+\!1}"]&
P(x_{i})\arrow[rr,"-\sigma_{i}=-ff_R"]
\arrow[rrd, "f"]&&
P(x_{i\!-\!1})\arrow[r, dash,"-\sigma_{i\!-\!1}"]&
P(x_{i\!-\!2})\arrow[r, dash,densely dotted]&
P(x_1)\arrow[r, dash,"-\sigma_{1}"]&
P(x_0)
\\
P(y_0)\arrow[r, dash,swap,"\overline\tau_{1}"]&
P(y_1)\arrow[r, dash,densely dotted]&
P(y_{j\!-\!2})\arrow[r, dash,swap,"\overline\tau_{j\!-\!1}"]&
P(y_{j\!-\!1})\arrow[rr, swap,"\overline\tau_{j}=f_Lf"]&&
P(y_j)\arrow[r, dash,swap,"\overline\tau_{j\!+\!1}"]&
P(y_{j\!+\!1})\arrow[r, dash,densely dotted]&
P(y_{n\!-\!1})\arrow[r, dash,swap,"\overline\tau_{n}"]&
P(y_n)
\end{tikzcd}
\caption{Unfolded diagram for $\i_1 \colon \P_{c_1} \to \M_{\f}$.} \label{c_1}
\end{figure}
\begin{figure}[H]
\begin{tikzcd}[row sep=scriptsize,column sep=scriptsize,nodes={scale=.72}]
&&&&&
P(x_{i\!-\!1}) \arrow[r, dash,"\sigma_{i\!-\!1}"]
\arrow[dd,color=blue,"+1"]&
P(x_{i\!-\!2})\arrow[r, dash,densely dotted]
\arrow[dd,color=blue,"-1"]&
P(x_{1})\arrow[r, dash,"\sigma_{1}"]
\arrow[dd,color=blue,"\mp1"]&
P(x_{0})
\arrow[dd,color=blue,"\pm1"]
\\
P(y_0) \arrow[r, dash,"\overline\tau_{1}"]
\arrow[dd,color=blue,crossing over,pos=.8,bend right=45,"+1"]&
P(y_{1})\arrow[r, dash,densely dotted]
\arrow[dd,color=blue,crossing over,pos=.8,bend right=45,"+1"]&
P(y_{j\!-\!2})\arrow[r, dash,"\overline\tau_{j\!-\!1}"]
\arrow[dd,color=blue,crossing over,pos=.8,bend right=45,"+1"]&
P(y_{j\!-\!1})
\arrow[rru,"f_Lff_R"]
\arrow[d,color=red,"-f_L"]
\arrow[dd,color=blue,crossing over,pos=.8,bend right=45,"+1"]
&&&&&&
\\
P(x_m) \arrow[r, dash,"-\sigma_{m}"]&
P(x_{m\!-\!1})\arrow[r, dash,densely dotted]&
P(x_{i\!+\!1})\arrow[r, dash,"-\sigma_{i\!+\!1}"]&
P(x_{i})\arrow[rr,"-\sigma_{i}=-ff_R"]
\arrow[rrd, "f"]&&
P(x_{i\!-\!1})\arrow[r, dash,"-\sigma_{i\!-\!1}"]&
P(x_{i\!-\!2})\arrow[r, dash,densely dotted]&
P(x_1)\arrow[r, dash,"-\sigma_{1}"]&
P(x_0)
\\
P(y_0)\arrow[r, dash,swap,"\overline\tau_{1}"]&
P(y_1)\arrow[r, dash,densely dotted]&
P(y_{j\!-\!2})\arrow[r, dash,swap,"\overline\tau_{j\!-\!1}"]&
P(y_{j\!-\!1})\arrow[rr, swap,"\overline\tau_{j}=f_Lf"]&&
P(y_j)\arrow[r, dash,swap,"\overline\tau_{j\!+\!1}"]&
P(y_{j\!+\!1})\arrow[r, dash,densely dotted]&
P(y_{n\!-\!1})\arrow[r, dash,swap,"\overline\tau_{n}"]&
P(y_n)
\end{tikzcd}
\caption{Unfolded diagram for $\i_2 \colon \P_{c_2} \to \M_{\f}$.} \label{c_2}
\end{figure}

\noindent
It is straightforward to check that these induce well-defined morphisms of complexes: only the commutativity of the central part of the diagram in each case is not immediately clear. 
The central part of the diagram defining $\i_1 \colon \P_{c_1} \to \M_{\f}$ is,
\[
\xymatrix{
P(x_i) \ar[rr]^-{f} \ar[d]_-{\begin{pmat} 1 & 0 \end{pmat}}                       &  & P(y_j) \ar[d]^-{\begin{pmat} -f_R & 1 \end{pmat}} \\
P(x_i) \oplus P(y_{j-1}) \ar[rr]_-{\begin{pmat} -ff_R & f \\ 0 & f_L f \end{pmat}} &  & P(x_{i-1}) \oplus P(y_j),
}
\]
which is clearly commutative. We need to check that $f_R \sigma_{i-1} = 0$ in the case that $\sigma_{i-1}$ is direct. But this is clear since in that case $\sigma_i \sigma_{i-1} = 0$ and $\sigma_i = f f_R$ forcing $f_R \sigma_{i-1} = 0$.
For the central part of the diagram defining $\i_2 \colon \P_{c_2} \to \M_{\f}$ we have,
\[
\xymatrix{
P(y_{j-1}) \ar[rr]^-{f_L f f_R} \ar[d]_-{\begin{pmat} -f_L & 1 \end{pmat}}         & & P(x_{i-1}) \ar[d]^-{\begin{pmat} 1 & 0 \end{pmat}} \\
P(x_i) \oplus P(y_{j-1}) \ar[rr]_-{\begin{pmat} -ff_R & f \\ 0 & f_L f \end{pmat}} & & P(x_{i-1}) \oplus P(y_j),
}
\]
which is clearly commutative. We need to check that $f_L \overline{\sigma_{i+1}} = 0$ in the case that $\sigma_{i+1} = \sigma_L$ is inverse. Again, this is clear since $\sigma_i = \sigma_R = ff_R$ together with $f_L f \neq 0$ and gentleness imply that $f_L \overline{\sigma_{i+1}} = 0$. 
Putting together the two diagrams, we get the following,
\[
\xymatrix{
P(x_i) \oplus P(y_{j-1}) \ar[rr]^-{\begin{pmat} f & 0 \\ 0 & f_L f f_R \end{pmat}} \ar[d]_-{\begin{pmat} 1 & 0 \\ -f_L & 1 \end{pmat}}  &  & P(y_j) \oplus P(x_{i-1}) \ar[d]^-{\begin{pmat} -f_R & 1 \\ 1 & 0 \end{pmat}} \\
P(x_i) \oplus P(y_{j-1}) \ar[rr]_-{\begin{pmat} -ff_R & f \\ 0 & f_L f \end{pmat}}                                                      &  & P(x_{i-1}) \oplus P(y_j),
}
\]
where the downward maps are clearly full rank matrices. In particular,
$\i = \begin{pmat} \i_1 \\ \i_2 \end{pmat} \colon \P_{c_1} \oplus \P_{c_2} \to \M_{\f}$ consists of full rank matrices in each degree, and is therefore an isomorphism of complexes.

We note that in the case that $\sigma_R = \emptyset$, the right hand side of the second diagram above does not exist and $\overline{\tau_1} \cdots \overline{\tau_{j-1}} = \overline{\gamma}$ and $c_2 = \gamma$. Similarly, if $\tau_R = \emptyset$ then the left hand side of the same diagram does not exist and $\alpha = \sigma_{i-1} \cdots \sigma_1$ and $c_2 = \alpha$. If both $\sigma_R = \emptyset$ and $\tau_R = \emptyset$ then the top line of this diagram does not exist and $c_2 = \emptyset$.
\end{proof}

\subsection{Mapping cones of single maps involving a band complex} \label{sec:single-band}

Suppose $\sigma$ and $\tau$ are homotopy strings or bands, with at least one being a homotopy band. We now consider the mapping cones of single maps $\f \colon \Q_\sigma \to \Q_\tau$. As was the case in Section~\ref{sec:graph-band}, $\M_{\f}$ is now indecomposable. Moreover, $\M_{\f}$ is a band complex if and only if $\sigma$ and $\tau$ are homotopy bands. This is the situation with which we start.

If both $\sigma$ and $\tau$ are homotopy bands, then any single map $\f \colon \B_{\sigma, \lambda} \to \B_{\tau, \mu}$ is necessarily of type (iv). We again impose the convention that the scalars $\lambda$ and $\mu$ are placed on direct arrows of $\sigma$ and $\tau$, respectively.

\begin{proposition} \label{prop:single-band-to-band}
Suppose $\sigma$ and $\tau$ are homotopy bands and $\f \colon \B_{\sigma,\lambda} \to \B_{\tau, \mu}$ with single component $f$. Suppose that $\sigma = \beta \sigma_L \sigma_R \alpha$ and $\tau = \delta \tau_L \tau_R \gamma$ are compatibly oriented for $\f$. Then $\M_{\f}$ is isomorphic to a band complex $\B_{c, -\lambda \mu^{-1}}$, where $c = \beta \sigma_L f \overline{\tau_L} \overline{\delta} \overline{\gamma} \overline{\tau_R} \overline{f} \sigma_R \alpha$ and where the scalar $-\lambda \mu^{-1}$ is placed on a direct homotopy letter of $\alpha$, $\beta$, $\overline{\gamma}$ or $\overline{\delta}$\footnote{It is possible here that the word $c$ is a nontrivial power of band. The statement when this occurs is given in \cite[Prop. 4.1]{Addendum}}.
\end{proposition}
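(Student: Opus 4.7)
The plan is to adapt the proof of Theorem~\ref{thm:single-maps} to the band setting, handling signs and scalars in the spirit of Proposition~\ref{prop:graph-band-to-band}. The first step is to confirm that $c = \beta \sigma_L f \bar{\tau}_L \bar{\delta} \bar{\gamma} \bar{\tau}_R \bar{f} \sigma_R \alpha$ is a genuine homotopy band. Since $\sigma$ and $\tau$ are bands, writing $\partial(\sigma) = \iota(\sigma) = m'$ and $\partial(\tau) = \iota(\tau) = n'$, the compatibility conditions of Definition~\ref{def:single-compatible} force $f$ and $\bar f$ to each form a standalone homotopy letter in $c$: the endpoint relations $\sigma_L f = 0$ and $f \bar{\tau}_L = 0$ (when $\sigma_L$ is direct or $\tau_L$ is inverse), together with $\sigma_R = f f_R$ and $\tau_R = \bar f \bar f_L$, separate $f$ from its neighbors in every admissible orientation. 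A direct count then yields $\partial(c) = \iota(c) = m' + n' + 1$, and the remaining band conditions (non-self-equivalence, absence of $a\bar a$ subwords, etc.) follow from gentleness together with the compatibility of $\f$.

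The next step is to construct an explicit isomorphism $\i \colon \B_{c,\nu} \to \M_{\f}$ for an as-yet-undetermined scalar $\nu$, following the unfolded-diagram template of Theorem~\ref{thm:single-maps}. Because $\sigma$ and $\tau$ wrap around, the two string summands $\P_{c_1}$ and $\P_{c_2}$ appearing in Theorem~\ref{thm:single-maps} cyclically concatenate to form the single homotopy band $c$. The same local matrices $\begin{pmat} 1 & 0 \\ -f_L & 1 \end{pmat}$ and $\begin{pmat} -f_R & 1 \\ 1 & 0 \end{pmat}$ that appeared at the central square in the string-case proof still produce a full-rank change of basis at the component of $\f$; the remaining squares are trivial. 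This gives $\M_{\f} \cong \B_{c,\nu}$ for some scalar $\nu$, which the final step identifies.

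To pin down $\nu$, I would track signs and scalars separately. The differential of $\M_{\f}$ carries a minus sign on each of the $2m'$ homotopy letters inherited from $\sigma$, and by the sign-transfer isomorphism used in Proposition~\ref{prop:graph-band-to-band} these signs can be removed in pairs around the band; since their number is even, they can be absorbed entirely up to a global sign. The obstruction to complete cancellation is the asymmetry of the mapping-cone formula at the single component $f$, which contributes the residual minus sign responsible for the negative factor in $-\lambda\mu^{-1}$. The scalars $\lambda$ and $\mu$ are then slid along the unfolded diagram using the scalar-sliding isomorphisms of Proposition~\ref{prop:graph-band-to-band}; transporting $\mu$ through the $\bar\tau$-portion of $c$ inverts it, and the scalars concentrate into the single factor $-\lambda\mu^{-1}$ placed on any chosen direct homotopy letter of $\alpha$, $\beta$, $\bar\gamma$ or $\bar\delta$.

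The main obstacle I foresee is the bookkeeping of signs: verifying uniformly that the residual global sign is $-1$ (not $+1$) across all admissible configurations of $\sigma_L,\tau_L,\sigma_R,\tau_R$. This will require a case analysis analogous to Lemma~\ref{lem:six-cases} adapted to the single-map setting, checking in each case that the parity of the minus signs accumulated around the band closes up correctly. Apart from this, the argument is a direct amalgamation of the constructions in Theorem~\ref{thm:single-maps} and Proposition~\ref{prop:graph-band-to-band}.
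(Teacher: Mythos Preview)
Your overall strategy matches the paper's proof: verify that $c$ is a band by the counting argument of Proposition~\ref{prop:graph-band-to-band}, build the isomorphism by gluing the two maps $\i_1$ and $\i_2$ of Theorem~\ref{thm:single-maps} along the band closures, and then slide the scalars $\lambda,\mu$ onto a single direct letter. The only place where your argument is looser than the paper's is the sign bookkeeping, and the case analysis you anticipate is in fact unnecessary.

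The paper's sign argument is a one-line parity observation. In Figures~\ref{c_1} and~\ref{c_2} the isomorphism components on the projectives $P(x_k)$ are alternating signs anchored at $+1$ on $P(x_i)$ (for $\i_1$) and $+1$ on $P(x_{i-1})$ (for $\i_2$), so the sign on $P(x_m)$ in $\i_1$ is $(-1)^{m-i}$ and the sign on $P(x_0)$ in $\i_2$ is $(-1)^{i-1}$. When $\sigma$ is a band these two vertices are identified, and since $m$ is even the exponents $m-i$ and $i-1$ have opposite parity, so the signs disagree. (The other gluing, $P(y_n)=P(y_0)$, carries $+1$ on both sides and causes no issue.) Forcing agreement introduces an odd number of sign flips on the letters of $\alpha$ or $\beta\sigma_L$, which the pair-cancellation isomorphism of Proposition~\ref{prop:graph-band-to-band} reduces to a single minus sign, absorbed into the scalar.

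Your framing in terms of ``$2m'$ minus signs in the differential cancelling in pairs'' is not quite the correct mechanism: after the isomorphism of Theorem~\ref{thm:single-maps} the letter $\sigma_i$ no longer appears in $c$ (it has been split into $f$ and $f_Lff_R$), so the relevant count is the $m-1$ remaining $\sigma$-letters, and the residual sign arises not from an ``asymmetry at $f$'' but precisely from the parity mismatch at the band closure $P(x_m)=P(x_0)$. Once you replace your heuristic with this observation, the proof is complete and no further case analysis is needed.
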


Note that in the statement above $\overline{\tau_R} \overline{f} \sigma_R = f_L f \overline{f} f f_R = f_L f f_R$.

\begin{proof}
The verification that $c$ is indeed a homotopy band is similar to that in the proof of Proposition~\ref{prop:graph-band-to-band}. The construction of an isomorphism $\i \colon \B_{c,-\lambda \mu^{-1}} \to \M_{\f}$ proceeds exactly as in the proof of Theorem~\ref{thm:single-maps}.
The sign occurs by observing that one of the homotopy substrings $\sigma_m \cdots \sigma_{i+1}$ and $\sigma_{i-1} \cdots \sigma_1$ has an even number of homotopy letters and the other an odd number of homotopy letters since $\sigma$ has in total an even number of homotopy letters and only $\sigma_i$ has been removed. Therefore the signs on the identity morphisms with domain $P(x_m) = P(x_0)$ in the unfolded diagrams in Figures~\ref{c_1} and \ref{c_2} are different. However, $\i_1$ and $\i_2$ are `glued together' to form $\i$ by identifying these two maps with opposite signs. This is only possible if a minus sign is introduced to an odd number of homotopy letters of $\beta \sigma_L$ and $\alpha$ to make it that these maps have the same sign. By the argument of Theorem~\ref{thm:single-maps} this odd number of minus signs can then be reduced to one, which we position on the direct homotopy letter bearing the scalar.
\end{proof}

The following two propositions deal with the cases where precisely one of $\sigma$ or $\tau$ is a homotopy band and the other is a homotopy string. In these cases $\M_{\f}$ is a string complex. The proofs proceed exactly as in Theorem~\ref{thm:single-maps}. Note that when $\sigma$ is a homotopy band we have $\sigma_L \neq \emptyset$ and $\sigma_R \neq \emptyset$.

\begin{proposition} \label{prop:single-band-to-string}
Suppose $\sigma$ is a homotopy band and $\tau$ is a homotopy string. Suppose $\f \colon \B_{\sigma,\lambda} \to \P_\tau$ is a single map with single component $f$. Suppose that $\sigma = \beta \sigma_L \sigma_R \alpha$ and $\tau = \delta \tau_L \tau_R \gamma$ is compatibly oriented for $\f$. Then $\M_{\f} \simeq \P_c$, where
\[
c = \begin{cases}
\alpha \beta \sigma_L f \overline{\tau_L} \overline{\delta}                                                           & \text{if $\tau_R = \emptyset$;} \\
\overline{\gamma} \overline{\tau_R} \overline{f} \sigma_R \alpha \beta \sigma_L f \overline{\tau_L} \overline{\delta} & \text{if $\tau_R \neq \emptyset$.}
\end{cases}
\]
\end{proposition}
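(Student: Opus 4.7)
The strategy is to adapt the explicit isomorphism construction of Theorem~\ref{thm:single-maps} to the band-to-string setting. The key observation is that when $\sigma$ is a homotopy band, the terminal vertex $e(\sigma_m)$ is identified with the initial vertex $s(\sigma_1)$, so the two separate string complexes $\P_{c_1}$ and $\P_{c_2}$ that would arise from the string-to-string analysis fuse into a single string complex $\P_c$, joined through the subword $\alpha \beta$ obtained by crossing the band's identified endpoints.

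First, I would check that the proposed walk $c$ is a legitimate homotopy string. For the nondegenerate case $\tau_R \neq \emptyset$, this reduces to verifying that $\sigma_R \alpha \beta \sigma_L$ is a valid homotopy substring of the infinite cyclic unfolding of $\sigma$, and that the adjacencies $\overline{\tau_R}\overline{f}$, $\overline{f}\sigma_R$, $\sigma_L f$ and $f\overline{\tau_L}$ respect the homotopy-letter conditions; both follow from conditions (L1)--(R2) of Section~\ref{sec:single-def} together with the compatible-orientation hypothesis. The degenerate case $\tau_R = \emptyset$ is similar and only requires verifying that $\alpha\beta\sigma_L$ is a valid homotopy substring.

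Next, I would construct an explicit isomorphism $\i \colon \P_c \to \M_{\f}$ by mirroring the assembly of $\i = (\i_1, \i_2)^{T}$ from the proof of Theorem~\ref{thm:single-maps}, using the analogues of Figures~\ref{c_1} and \ref{c_2}. In the band case, the two boundary projectives $P(x_0)$ and $P(x_m)$ appearing on the ends of $\sigma$ are one and the same, and the scalar $\lambda$ of the band complex sits on precisely one of the identity morphisms crossing this identified vertex. The combined map $\i$ is obtained by concatenating $\i_1$ and $\i_2$ along this shared vertex; the central commutativity in each of the two local squares and the full-rank verification in each cohomological degree go through exactly as in Theorem~\ref{thm:single-maps}. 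In the degenerate case $\tau_R = \emptyset$, the right-hand block $\i_2$ reduces to the identity on the projectives corresponding to $\alpha$, and the resulting $\i$ simplifies accordingly.

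The main obstacle is the careful bookkeeping at the glue vertex $P(x_0) = P(x_m)$: one must verify that the sign conventions of $\i_1$ and $\i_2$ are compatible at this vertex, and that any resulting sign (along with the scalar $\lambda$) can be absorbed into one of the identity morphisms along $\beta$ or $\alpha$, using the same rescaling trick as in the proof of Proposition~\ref{prop:graph-band-to-band}. Since the target $\P_c$ is a string complex and therefore carries no intrinsic scalar, any residual sign or scalar is removable by an isomorphism of string complexes, and so produces no ambiguity in the final statement.
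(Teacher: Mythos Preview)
Your proposal is correct and follows exactly the approach the paper takes: the paper simply states that the proof ``proceeds exactly as in Theorem~\ref{thm:single-maps}'' and notes that $\sigma_L \neq \emptyset$ and $\sigma_R \neq \emptyset$ since $\sigma$ is a band. Your write-up in fact supplies more detail than the paper does here, correctly identifying the gluing of $\i_1$ and $\i_2$ at $P(x_0)=P(x_m)$ (cf.\ the analogous argument in Proposition~\ref{prop:single-band-to-band}) and observing that any residual sign or scalar is absorbed by a string-complex isomorphism since $\P_c$ carries no intrinsic scalar.
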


\begin{proposition} \label{prop:single-string-to-band}
Suppose $\sigma$ is a homotopy string and $\tau$ is a homotopy band. Suppose $\f \colon \P_\sigma \to \B_{\tau,\mu}$ is a single map with single component $f$. Suppose that $\sigma = \beta \sigma_L \sigma_R \alpha$ and $\tau = \delta \tau_L \tau_R \gamma$ is compatibly oriented for $\f$. Then $\M_{\f} \simeq \P_c$, where
\[
c = \begin{cases}
\beta \sigma_L f \overline{\tau_L} \overline{\delta} \overline{\gamma}                                                & \text{if $\sigma_R = \emptyset$;} \\
 \beta \sigma_L f \overline{\tau_L} \overline{\delta} \overline{\gamma} \overline{\tau_R} \overline{f} \sigma_R \alpha & \text{if $\sigma_R \neq \emptyset$.}
\end{cases}
\]
\end{proposition}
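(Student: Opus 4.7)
The plan is to mimic the proof of Theorem~\ref{thm:single-maps}, adapting it as in Proposition~\ref{prop:single-band-to-band} to account for the cyclic structure of the band $\tau$. I would construct an explicit isomorphism $\i \colon \P_c \to \M_{\f}$ at the level of unfolded diagrams.

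First, I would verify that $c$ is a valid homotopy string. The concatenation conditions at $\sigma_L f$ and $f \overline{\tau_L}$ (and, when $\sigma_R \neq \emptyset$, also at $\overline{\gamma} \overline{\tau_R}$, $\overline{\tau_R} \overline{f}$, and $\overline{f} \sigma_R$) follow from the compatible orientation hypothesis on $\f$, exactly as in the analysis underlying Theorem~\ref{thm:single-maps} and Proposition~\ref{prop:single-band-to-band}. The only new juncture, $\overline{\delta} \overline{\gamma}$, is a legitimate homotopy string transition precisely because $\tau = \delta \tau_L \tau_R \gamma$ is a band: the cyclic transition from the last letter of $\delta$ back to the first letter of $\gamma$ is a valid transition in $\tau$.

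Next, I would assemble the isomorphism $\i$ by gluing together the analogues of the maps $\i_1$ and $\i_2$ from Figures~\ref{c_1} and \ref{c_2} of Theorem~\ref{thm:single-maps}. The key structural point is that, because $\tau$ is a band, the two separate string complexes $\P_{c_1}$ and $\P_{c_2}$ appearing in that theorem are now linked via a full traversal of $\overline{\gamma}$ (corresponding to going ``the long way round'' the band $\tau$ from the right side of the landing point of $f$ back to its left side), giving a single string complex $\P_c$. When $\sigma_R = \emptyset$, the right-hand component collapses (as in the analogous corner case of Theorem~\ref{thm:single-maps}) and $\overline{\gamma}$ is attached to $\P_{c_1}$ via identity morphisms on the modules indexed by the vertices of $\overline{\gamma}$ in the unfolded diagram of $\B_{\tau,\mu}$. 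Commutativity of $\i$ with the differentials and its full-rank property in each cohomological degree are verified exactly as in Theorem~\ref{thm:single-maps}, using the invertibility of the same $2\times 2$ matrices; the only new checks are along the wraparound, which are routine given the homotopy-string conditions already verified.

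The main obstacle is the correct absorption of the scalar $\mu$. Since $\P_c$ is a string complex and carries no scalar, the scalar $\mu$ on the distinguished direct homotopy letter of $\B_{\tau,\mu}$ must be absorbed into $\i$ by rescaling the appropriate identity components on one side of the wraparound by $\mu^{\pm 1}$. Unlike the band-to-band case of Proposition~\ref{prop:single-band-to-band}, no sign is introduced: because $\sigma$ is a string and not a band, there is no wraparound on the $\sigma$-side that would force the reconciliation of two differently-signed copies of an identity component on the $\sigma$-side, and so the bookkeeping reduces to a single rescaling along the $\tau$-side.
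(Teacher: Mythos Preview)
Your proposal is correct and follows essentially the same approach as the paper, which simply states that the proof proceeds exactly as in Theorem~\ref{thm:single-maps}. Your elaboration of how the two pieces $\i_1$ and $\i_2$ are glued along the $\tau$-side wraparound, and why no sign appears (because the identity components on the $\tau$-side in Figures~\ref{c_1} and \ref{c_2} all carry $+1$, unlike the $\sigma$-side in the band-to-band case), is exactly the content the paper leaves implicit.
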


\section{Mapping cones of double maps} \label{sec:double-maps}

We now turn to the statement for double maps. Note that double maps are automatically `compatibly oriented' and therefore we do not require such a definition in this case.

\begin{theorem}\label{thm:double-maps}
Let $\f \colon \P_\sigma \longrightarrow \P_\tau$ be a double map with components $(f_L,f_R)$. Suppose that $\sigma = \beta \sigma_L \sigma_C \sigma_R \alpha$ and $\tau = \delta \tau_L \tau_C \tau_R \gamma$. Then $\M_{\f} \simeq \P_{c_1} \oplus \P_{c_2}$, where
$c_1 = \overline{\gamma} \overline{\tau_R} \overline{f_R} \sigma_R \alpha$
and
$c_2 = \beta \sigma_L f_L \overline{\tau_L} \overline{\delta}$.
\end{theorem}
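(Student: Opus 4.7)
The plan is to follow the strategy used in the proof of Theorem~\ref{thm:single-maps} and construct an explicit isomorphism of complexes $\i = \begin{pmat} \i_1 \\ \i_2 \end{pmat} \colon \P_{c_1} \oplus \P_{c_2} \to \M_{\f}$, where $\i_1$ and $\i_2$ exhibit $\P_{c_1}$ and $\P_{c_2}$ as the two indecomposable summands coming from the right and left halves of $\f$, respectively. The key conceptual input, replacing the factorisations $\sigma_R = f f_R$ and $\tau_R = \overline{f}\, \overline{f_L}$ exploited in the single-map case, is the defining commutativity $f_L \tau_C = \sigma_C f_R$ of the double map $\f$: this commutative square is precisely what allows us to split the cone into two pieces along the central overlap.

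Concretely, I would label the projectives along $\sigma$ and $\tau$ as in the proof of Theorem~\ref{thm:single-maps} and write down the unfolded diagram of $\M_{\f}$, including the sign change on the differential of $\P_\sigma$ induced by the mapping cone. The map $\i_1$ is then defined to be the identity on the projectives coming from $\alpha$ and $\overline{\gamma}$, to use $\sigma_R$ as the homotopy letter of $c_1$ landing in the $\sigma$-row of the cone, to cross into the $\tau$-row via the component $-f_R$ at the endpoint shared by $\sigma_C$ and $\sigma_R$, and finally to continue as the identity along $\overline{\tau_R}\,\overline{\gamma}$; the definition of $\i_2$ is symmetric, crossing from the $\tau$-row to the $\sigma$-row via $-f_L$ on the left. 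At the central position of the cone the relevant block of the differential has the form
\[
\begin{pmat} -\sigma_C & f_R \\ 0 & \tau_C \end{pmat},
\]
and it is exactly the double-map relation $f_L \tau_C = \sigma_C f_R$ that ensures the central squares of both $\i_1$ and $\i_2$ commute.

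The remaining verifications are: (a) at the two vertices where each inclusion crosses from one row to the other, the possibly spurious compositions $\sigma_L f_L$, $f_L \overline{\tau_L}$, $\overline{\sigma_R} f_R$, $f_R \tau_R$ either vanish in $(Q,I)$ or are absorbed by the cone differential thanks to the endpoint conditions (L1), (L2), (R1), (R2) on $f_L$ and $f_R$ (this is the analogue of the step $f_R \sigma_{i-1} = 0$ in the proof of Theorem~\ref{thm:single-maps}); and (b) the block matrix representing $\i$ in each cohomological degree is full rank, so that $\i$ is an isomorphism of complexes rather than merely a homotopy equivalence. The main obstacle is the bookkeeping caused by the four possible orientations of the boundary homotopy letters $\sigma_L, \tau_L, \sigma_R, \tau_R$, exactly as in Theorem~\ref{thm:single-maps}; this produces a modest case analysis but no new conceptual difficulty, and crucially none of the degenerate $\emptyset$-cases of Theorem~\ref{thm:single-maps} arise here since $\sigma_C, \tau_C, f_L, f_R$ are required to be nontrivial paths in the definition of a double map.
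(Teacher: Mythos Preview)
Your proposal is correct and follows essentially the same strategy as the paper: construct explicit chain maps $\i_1,\i_2$ mirroring those in Theorem~\ref{thm:single-maps}, verify commutativity at the central square, and check that the resulting block matrices are full rank in each degree. The only minor difference is that the paper first uses the gentle-algebra factorisation $\sigma_C = f_L f$ and $\tau_C = f f_R$ (for a possibly trivial path $f$) and places a single extra component $-f$ on $\i_2$, rather than the symmetric pair $-f_R,-f_L$ you describe; your formulation of the central block and the crossing components would need this small adjustment when written out in full.
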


\begin{proof}
In terms of unfolded diagrams, the map $\f$ is of the following form, where $f$ is permitted to be a trivial path. The case when $f$ is not trivial corresponds to a singleton double map.
\[
\xymatrix@!R=4px{
 \ar@{~}[r]^-{\beta}  & \xydot \arr^-{\sigma_L} & \xydot \ar[r]^-{\sigma_C = f_Lf} \ar[d]_-{f_L} & \xydot \arr^-{\sigma_R} \ar[d]^-{f_R} & \xydot \ar@{~}[r]^-{\alpha}    &   \\
 \ar@{~}[r]_-{\delta} & \xydot \arr_-{\tau_L}     & \xydot \ar[r]_-{\tau_C = ff_R}                         & \xydot \arr_-{\tau_R}                         & \xydot \ar@{~}[r]_-{\gamma} & 
 }
\]
For convenience, assume $\sigma_C = \sigma_i$ and $\tau_C = \tau_j$ etc. Then 
the unfolded diagram of the mapping cone $\M_{\f}$ is  
\[
\xymatrix@!R=4px{
 \xydot   \arr^-{-\sigma_{m}} & \xydot \ar@{.}[r] & \xydot \arr^-{-\sigma_{i+1}} & \xydot \ar[r]^-{-\sigma_i=-f_L f} \ar[dr]_-{f_L} & \xydot \arr^-{\sigma_{i-1}} \ar[dr]^-{f_R} & \xydot \ar@{.}[r] &  \xydot \arr^-{\sigma_1} &  \xydot  \\
  & \xydot \arr_-{\tau_{n}}     & \xydot  \ar@{.}[r] & \xydot \arr_-{\tau_{j+1}}     & \xydot \ar[r]_-{\tau_j=ff_R}                         & \xydot \arr_-{\tau_{j-1}}                          & \xydot \ar@{.}[r] & \xydot \arr_-{\tau_1} &  \xydot 
}
\]

The proof is the same as that of Theorem~\ref{thm:single-maps}, therefore, we just write down the maps $\i_1 \colon \P_{c_1} \to \M_{\f}$ and $\i_2 \colon \P_{c_2} \to \M_{\f}$ at the level of unfolded diagrams in the figures below. It is then straightforward to check these define full rank matrices in each cohomological degree, so that $\i = \begin{pmat} \i_1 \\ \i_2 \end{pmat} \colon \P_{c_1} \oplus \P_{c_2} \to \M_{\f}$ is an isomorphism.

\begin{figure}[H]
\begin{tikzcd}[column sep=normal, row sep=small,nodes={scale=.8}]
&
P(x_{i\!-\!1})\arrow[r,dash,"\sigma_{i\!-\!1}"]
\arrow[rd,"f_R"]
\arrow[ddd,color=blue,swap,bend right=40,pos=.6,"+1"]&
P(x_{i\!-\!2})\arrow[r,dash,"\sigma_{i\!-\!2}"]
\arrow[ddd,color=blue,swap,bend right=55,pos=.6,"-1"]&
P(x_{i\!-\!3})\arrow[r,dash,densely dotted]
\arrow[ddd,color=blue,swap,bend right=55,pos=.3,"+1"]&
P(x_{2})\arrow[r,dash,"\sigma_{2}"]
\arrow[ddd,color=blue,swap,bend right=55,pos=.3,"\pm1"]&
P(x_{1})\arrow[r,"\sigma_1"]
\arrow[ddd,color=blue,swap,bend right=55,pos=.3,"\mp1"]
&
P(x_0)\arrow[ddd,color=blue,swap,bend right=55,pos=.3,"\pm1"]
\\
&
&
P(y_{j\!-\!1})
\arrow[r,dash,swap,"\tau_{j\!-\!1}"]
\arrow[ddd,color=blue,bend left=55,pos=.3,swap,"+1"]&
P(y_{j\!-\!2})\arrow[r,dash,densely dotted]
\arrow[ddd,color=blue,bend left=55,pos=.3,swap,"+1"]&
P(y_{2})\arrow[r,dash,swap,"\tau_{2}"]
\arrow[ddd,color=blue,bend left=55,pos=.3,swap,"+1"]&
P(y_{1})\arrow[r,dash,swap,"\tau_{1}"]
\arrow[ddd,color=blue,bend left=55,pos=.3,swap,"+1"]&
P(y_{0})\arrow[ddd,color=blue,bend left=55,pos=.3,swap,"+1"]
\\
&&&&&&\\
P(x_i)\arrow[r,"-\sigma_i=-f_Lf"]\arrow[rd,swap,"f_L"]&
P(x_{i\!-\!1})\arrow[r,dash,"-\sigma_{i\!-\!1}"]\arrow[rd,"f_R"]&
P(x_{i\!-\!2})\arrow[r,dash,"-\sigma_{i\!-\!2}"]&
P(x_{i\!-\!3})\arrow[r,dash,densely dotted]&
P(x_{2})\arrow[r,dash,"-\sigma_{2}"]&
P(x_{1})\arrow[r,dash,"-\sigma_1"]&
P(x_0)
\\
&
P(y_j)\arrow[r,swap,"ff_R"]&
P(y_{j\!-\!1})\arrow[r,dash,swap,"\tau_{j\!-\!1}"]&
P(y_{j\!-\!2})\arrow[r,dash,densely dotted]&
P(y_{2})\arrow[r,dash,swap,"\tau_{2}"]&
P(y_{1})\arrow[r,dash,swap,"\tau_{1}"]&
P(y_{0})
\end{tikzcd}
\end{figure}

\begin{figure}[H]
\begin{tikzcd}[column sep=normal, row sep=small,nodes={scale=.8}]
P(x_m)\arrow[r,dash,"\sigma_{m}"]
\arrow[ddd,color=blue,bend right=55,pos=.3,"\mp1"]&
P(x_{m\!-\!1})\arrow[r,dash,densely dotted]
\arrow[ddd,color=blue,swap,bend right=55,pos=.3,"\pm1"]&
P(x_{i\!+\!2})\arrow[r,dash,"\sigma_{i\!+\!2}"]
\arrow[ddd,color=blue,swap,bend right=55,pos=.3,"+1"]&
P(x_{i\!+\!1})\arrow[r,dash,"\sigma_{i\!+\!1}"]
\arrow[ddd,color=blue,swap,bend right=55,pos=.3,"-1"]&
P(x_{i})\arrow[rd,"f_L"]
\arrow[ddd,color=blue,swap,bend right=55,pos=.3,"+1"]&
&\\
&
P(y_n)\arrow[r,dash,swap,"\tau_{n}"]
\arrow[ddd,color=blue,bend left=55,pos=.3,swap,"+1"]&
P(y_{n\!-\!1})\arrow[r,dash,densely dotted]
\arrow[ddd,color=blue,bend left=55,pos=.3,swap,"+1"]&
P(y_{j\!+\!2})\arrow[r,dash,swap,"\tau_{j\!+\!2}"]
\arrow[ddd,color=blue,bend left=55,pos=.3,swap,"+1"]&
P(y_{j\!+\!1})\arrow[r,dash,swap,"\tau_{j\!+\!1}"]
\arrow[ddd,color=blue,bend left=55,pos=.3,swap,"+1"]&
P(y_{j})\arrow[dd,color=red,"-f"]
\arrow[ddd,color=blue,bend left=55,pos=.3,"+1"]&
\\
&&&&&&\\
P(x_m)\arrow[r,dash,"-\sigma_{m}"]&P(x_{m\!-\!1})\arrow[r,dash,densely dotted]&P(x_{i\!+\!2})\arrow[r,dash,"-\sigma_{i\!+\!2}"]&P(x_{i\!+\!1})\arrow[r,dash,"-\sigma_{i\!+\!1}"]&
P(x_{i})\arrow[r,"-f_Lf"]\arrow[rd,"f_L"]&
P(x_{i\!-\!1})\arrow[rd,"f_R"]&\\
&P(y_n)\arrow[r,dash,swap,"\tau_{n}"]&P(y_{n\!-\!1})\arrow[r,dash,densely dotted]&P(y_{j\!+\!2})\arrow[r,dash,swap,"\tau_{j\!+\!2}"]&P(y_{j\!+\!1})\arrow[r,dash,swap,"\tau_{j\!+\!1}"]&
P(y_{j})\arrow[r,swap,"ff_R"]&
P(y_{j\!-\!1})
\end{tikzcd}
\end{figure}
We note that the signs on the isomorphisms $P(x_0) \to P(x_0)$ and $P(x_m) \to P(x_m)$ defined in the above diagram are necessarily different because one of $\sigma_m \cdots \sigma_{i+1}$ and $\sigma_{i-1} \cdots \sigma_1$ has an even number of homotopy letters and the other an odd number of homotopy letters.
\end{proof}

We now give the analogous statements for double maps $\f \colon \Q_\sigma \to \Q_\tau$ where at least one of $\sigma$ or $\tau$ is a homotopy band. As was the case for graph maps and single maps before, $\M_{\f}$ is indecomposable and is a band complex if and only if both $\sigma$ and $\tau$ are homotopy bands. 
In this case we get an extra sign on the scalar because of the different parities in the lengths of $\sigma_m \cdots \sigma_{i+1}$ and $\sigma_{i-1} \cdots \sigma_1$ noted above.

\begin{proposition} \label{prop:double-band-to-band}
Let $\sigma = \beta \sigma_L \sigma_C \sigma_R \alpha$ and $\tau = \delta \tau_L \tau_C \tau_R \gamma$ be homotopy bands. Suppose $\f \colon \B_{\sigma,\lambda} \to \B_{\tau,\mu}$ is a double map with components $(f_L,f_R)$. Then $\M_{\f} \simeq \B_{c,-\lambda \mu^{-1}}$, where $c = \beta \sigma_L f_L \bar{\tau}_L \bar{\delta} \bar{\gamma} \bar{\tau}_R \bar{f}_R \sigma_R \alpha$ and where the scalar $-\lambda \mu^{-1}$ is placed on a direct homotopy letter of $\alpha$, $\beta$, $\overline{\gamma}$ or $\overline{\delta}$\footnote{It is possible here that the word $c$ is a nontrivial power of band. The statement when this occurs is given in \cite[Prop. 4.2]{Addendum}}.
\end{proposition}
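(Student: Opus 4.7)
The plan is to adapt the string-complex argument of Theorem~\ref{thm:double-maps} to the band setting, following the template already used to promote Theorem~\ref{thm:single-maps} to Proposition~\ref{prop:single-band-to-band} (and Theorem~\ref{thm:graph-maps} to Proposition~\ref{prop:graph-band-to-band}). The two ingredients to supply are: (i) a verification that the cyclic word $c = \beta \sigma_L f_L \bar{\tau}_L \bar{\delta} \bar{\gamma} \bar{\tau}_R \bar{f}_R \sigma_R \alpha$ is a homotopy band, and (ii) the construction of an explicit isomorphism $\i \colon \B_{c,-\lambda\mu^{-1}} \to \M_{\f}$, together with the bookkeeping that forces the scalar $-\lambda\mu^{-1}$ on a direct homotopy letter.

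For (i), I would count direct versus inverse homotopy letters, copying the argument used for Proposition~\ref{prop:graph-band-to-band}. Writing $\partial$ and $\iota$ for the numbers of direct and inverse homotopy letters, the equalities $\partial(\sigma)=\iota(\sigma)$ and $\partial(\tau)=\iota(\tau)$ together with the single insertion pair $(f_L,\bar f_R)$ (one direct, one inverse) coming from the double-map central column force $\partial(c)=\iota(c)$. One then has to check that $c$ is not a proper power of a shorter homotopy substring and that $c$ is a legitimate walk at the joining points $\sigma_L f_L$, $f_L \bar{\tau}_L$, $\bar{\tau}_R \bar{f}_R$ and $\bar{f}_R \sigma_R$; the orientation conventions (L1)--(L2), (R1)--(R2) on $f_L$ and $f_R$ listed in Section~\ref{sec:double-def} give all these checks for free.

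For (ii), I would reuse verbatim the two unfolded-diagram isomorphisms $\i_1 \colon \P_{c_1} \to \M_{\f}$ and $\i_2 \colon \P_{c_2} \to \M_{\f}$ constructed in the proof of Theorem~\ref{thm:double-maps}, where $c_1 = \bar{\gamma} \bar{\tau}_R \bar{f}_R \sigma_R \alpha$ and $c_2 = \beta \sigma_L f_L \bar{\tau}_L \bar{\delta}$. Exactly as in the proof of Proposition~\ref{prop:single-band-to-band}, the band complex $\B_{c,-\lambda\mu^{-1}}$ is glued together from these two pieces by identifying their respective copies of $P(x_0)=P(x_m)$. The key combinatorial observation, already exploited in Theorem~\ref{thm:double-maps} and in Proposition~\ref{prop:single-band-to-band}, is that the substrings $\sigma_m\cdots\sigma_{i+1}$ and $\sigma_{i-1}\cdots\sigma_1$ have lengths of opposite parity, so the two identity maps $P(x_0)\to P(x_0)$ appearing in the $\i_1$ and $\i_2$ diagrams carry opposite signs. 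To glue them into a single map from a band complex one therefore has to absorb an overall $-1$ into the scalar, which by the scalar-transport trick used in the proof of Proposition~\ref{prop:graph-band-to-band} can be pushed along the band onto any prescribed direct homotopy letter of $\alpha$, $\beta$, $\bar{\gamma}$ or $\bar{\delta}$. This produces the $-\lambda\mu^{-1}$ in the statement.

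The main obstacle is the bookkeeping for (ii): one has to confirm that, after gluing, the matrix representing $\i$ in each cohomological degree is still full rank, and that the scalar $-\lambda\mu^{-1}$ (as opposed to $\pm\lambda\mu^{-1}$) is the correct sign. Both issues reduce to the parity observation at the end of the proof of Theorem~\ref{thm:double-maps}; since the double-map insertion adds the same number of direct and inverse letters as the graph-map or single-map case, the sign accounting is identical to the single-map band case (Proposition~\ref{prop:single-band-to-band}) and no new phenomena appear. Once this parity is tracked, $\i$ consists of the same full-rank block matrices used in Theorem~\ref{thm:double-maps}, so it is an isomorphism of complexes and the result follows.
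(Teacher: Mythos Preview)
Your proposal is correct and matches the paper's approach essentially exactly. The paper does not give an explicit proof of this proposition; it simply notes, in the paragraph preceding it, that the extra sign on the scalar arises ``because of the different parities in the lengths of $\sigma_m \cdots \sigma_{i+1}$ and $\sigma_{i-1} \cdots \sigma_1$ noted above'' --- precisely the parity argument you describe for gluing the $\i_1$ and $\i_2$ of Theorem~\ref{thm:double-maps} into a single isomorphism out of the band complex, as in Proposition~\ref{prop:single-band-to-band}.
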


\begin{proposition} \label{prop:double-band-to-string}
Let $\sigma = \beta \sigma_L \sigma_C \sigma_R \alpha$ be a homotopy band and $\tau = \delta \tau_L \tau_C \tau_R \gamma$ be a homotopy string. Suppose $\f \colon \B_{\sigma, \lambda} \to \P_\tau$ is a double map with components $(f_L,f_R)$. Then $\M_{\f} \simeq \P_c$, where
$c = \delta \tau_L f_L \bar{\sigma}_L \bar{\beta} \bar{\alpha} \bar{\sigma}_R \bar{f}_R \tau_R \gamma$.
\end{proposition}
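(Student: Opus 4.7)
The plan is to mirror the proof of Theorem~\ref{thm:double-maps}, exploiting the band structure of $\sigma$ in the same way Proposition~\ref{prop:single-band-to-string} adapted Theorem~\ref{thm:single-maps}. The source $\B_{\sigma,\lambda}$ is a periodic complex whose unfolded diagram identifies the two ends of the fundamental domain $\beta\sigma_L\sigma_C\sigma_R\alpha$. Computing $\M_{\f}$ as in Theorem~\ref{thm:double-maps} would naively produce two would-be summands with underlying words $c_1 = \overline{\gamma}\,\overline{\tau_R}\,\overline{f_R}\,\sigma_R\alpha$ and $c_2 = \beta\sigma_L f_L\,\overline{\tau_L}\,\overline{\delta}$; the band identification of the $\alpha$-end of $\sigma$ with its $\beta$-end glues these pieces into a single string complex whose word is the claimed $c = \delta\tau_L f_L\,\overline{\sigma_L}\,\overline{\beta}\,\overline{\alpha}\,\overline{\sigma_R}\,\overline{f_R}\tau_R\gamma$.

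First I would verify that $c$ is a well-formed homotopy string. The internal junctions $\tau_L f_L$, $f_L\overline{\sigma_L}$, $\overline{\sigma_R}\,\overline{f_R}$ and $\overline{f_R}\tau_R$ are exactly those appearing inside $c_1$ and $c_2$ in Theorem~\ref{thm:double-maps} and are controlled by the double-map endpoint conditions (L1)--(L2), (R1)--(R2) recalled in Section~\ref{sec:double-def}. The only new junction is $\overline{\beta}\,\overline{\alpha}$, the inverse of the cyclic concatenation $\alpha\beta$ inside the band $\sigma$; since $\sigma$ is a homotopy band, this concatenation automatically avoids the forbidden patterns $a\bar a$ and $\bar a a$ and the relations in $I$.

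Next I would write down an isomorphism $\P_c \to \M_{\f}$ on unfolded diagrams by copying the two maps $\i_1,\i_2$ constructed in the proof of Theorem~\ref{thm:double-maps} and reinterpreting them as a single map via the band identification: the right-most projective in the domain of $\i_2$ (at the $\beta$-end of $\sigma$) coincides, through the band periodicity of $\B_{\sigma,\lambda}$, with the left-most projective in the domain of $\i_1$ (at the $\alpha$-end). Commutativity of the two squares centred on the columns of $\sigma_C$ and $\tau_C$ is inherited verbatim from Theorem~\ref{thm:double-maps}, and in every cohomological degree the resulting matrix is block-triangular with full rank, so the combined map is an isomorphism of complexes.

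The main obstacle will be accounting for signs and the scalar $\lambda$ across the band wrap-around. In Proposition~\ref{prop:double-band-to-band} the parity mismatch between the lengths of the segments $\sigma_m\cdots\sigma_{i+1}$ and $\sigma_{i-1}\cdots\sigma_1$, combined with the fact that the $\tau$-side \emph{also} closes up, forces the surviving scalar $-\lambda\mu^{-1}$. Here the $\tau$-side is a string with two free endpoints, so any sign thus produced, together with the scalar $\lambda$ on the unique direct homotopy letter of $\sigma$ that carries it, can be migrated along the $\delta,\tau_L,\tau_R,\gamma$ portions of $c$ using the scalar-sliding move illustrated in the proof of Proposition~\ref{prop:graph-band-to-band} and absorbed at a free endpoint by rescaling basis vectors. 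Consequently $\M_{\f}\simeq\P_c$ with no residual sign or scalar datum.
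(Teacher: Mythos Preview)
Your proposal is correct and follows the paper's own (implicit) approach: Proposition~\ref{prop:double-band-to-string} is stated without proof as one of the ``analogous statements'' to Theorem~\ref{thm:double-maps}, and your argument supplies exactly the expected details---gluing $\i_1$ and $\i_2$ from that proof along the band wrap-around of $\sigma$, and then absorbing the residual sign and the scalar $\lambda$ at a free endpoint of the string side. One minor slip: in the figures of Theorem~\ref{thm:double-maps} the $\beta$-end projective $P(x_m)$ sits at the \emph{left} of the domain of $\i_2$ and the $\alpha$-end projective $P(x_0)$ at the \emph{right} of the domain of $\i_1$, so your description of which ends are identified has left and right interchanged; this does not affect the argument.
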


\begin{proposition} \label{prop:double-string-to-band}
Let $\sigma = \beta \sigma_L \sigma_C \sigma_R \alpha$ be a homotopy string and $\tau = \delta \tau_L \tau_C \tau_R \gamma$ be a homotopy band. Suppose $\f \colon \P_\sigma \to \B_{\tau, \mu}$ is a double map with components $(f_L,f_R)$. Then $\M_{\f} \simeq \P_c$, where
$c = \beta \sigma_L f_L \bar{\tau}_L \bar{\delta} \bar{\gamma} \bar{\tau}_R \bar{f}_R \sigma_R \alpha$.
\end{proposition}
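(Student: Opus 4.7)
The plan is to adapt the isomorphism $\i = \bigl[\begin{smallmatrix}\i_1 \\ \i_2\end{smallmatrix}\bigr] \colon \P_{c_1} \oplus \P_{c_2} \to \M_{\f}$ constructed in the proof of Theorem~\ref{thm:double-maps}. In that argument, for two string complexes, $\M_{\f}$ was shown to split as $\P_{c_1} \oplus \P_{c_2}$ with $c_1 = \bar{\gamma} \bar{\tau}_R \bar{f}_R \sigma_R \alpha$ and $c_2 = \beta \sigma_L f_L \bar{\tau}_L \bar{\delta}$, and the two direct summands corresponded geometrically to the two connected components obtained after removing the central ``$X$''-shape formed by $\sigma_C$, $\tau_C$, $f_L$, and $f_R$ from the unfolded diagram of the cone. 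When $\tau$ is replaced by a homotopy band, the bottom row of the unfolded diagram of $\M_{\f}$ is no longer finite but doubly-infinite and periodic; the two components which were independent in the string case become linked, one period apart, via the cyclic structure of $\tau$. This merges $\P_{c_1}$ and $\P_{c_2}$ into the single string complex $\P_c$ with $c = \beta \sigma_L f_L \bar{\tau}_L \bar{\delta} \bar{\gamma} \bar{\tau}_R \bar{f}_R \sigma_R \alpha$, so that $c$ reads as $c_2$ followed, via the periodic continuation of $\tau$, by $c_1$.

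The steps I would carry out are: first, record the unfolded diagram of $\M_{\f}$ with $\P_\sigma$ finite on top and $\B_{\tau,\mu}$ doubly-infinite and periodic on the bottom; second, verify that $c$ is a genuine homotopy string by observing that it inherits the two endpoints of $\sigma$, and that the central segment $\bar{\tau}_L \bar{\delta} \bar{\gamma} \bar{\tau}_R$ is exactly one period of $\tau$ read in reverse with the homotopy letter $\tau_C$ removed; third, define $\i \colon \P_c \to \M_{\f}$ by transcribing the two ``unfolded diagram'' figures for $\i_1$ and $\i_2$ from the proof of Theorem~\ref{thm:double-maps} onto the two natural halves of $c$, with the interface $\bar{\delta} \bar{\gamma}$ of $c$ identifying images that lie one period apart in the unfolded diagram of $\B_{\tau,\mu}$; finally, check that $\i$ produces a full-rank matrix in each cohomological degree by the same local computation as in Theorem~\ref{thm:double-maps}, concluding that $\i$ is an isomorphism of complexes.

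The main obstacle I anticipate is the bookkeeping of the scalar $\mu$, which sits on some direct homotopy letter in a period of $\tau$ and therefore appears somewhere in the infinite bottom row of the unfolded diagram of $\M_{\f}$. As in the proof of Proposition~\ref{prop:graph-band-to-band}, this scalar can be transported along homotopy letters of the unfolded diagram via the elementary scaling isomorphism $\xymatrix@1@C=12pt{\xydot \ar@{=}[r] & \xydot}$, $\xymatrix@1@C=12pt{\xydot \ar[r]^-{\mu^{-1}} & \xydot}$. The key point is that, because $\sigma$ is a homotopy string with two genuine endpoints (unlike in Proposition~\ref{prop:double-band-to-band}), there is no parity obstruction to moving $\mu$ all the way off the complex: we have enough freedom to rescale the projective summands indexed by the finite ends of $\sigma$ so that $\mu$ is entirely absorbed and no residual sign $-\lambda\mu^{-1}$ survives. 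This is why the statement produces $\P_c$ rather than a decorated complex.
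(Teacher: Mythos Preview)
Your proposal is correct and follows exactly the approach the paper intends: the paper states Proposition~\ref{prop:double-string-to-band} without a separate proof, relying on the reader to adapt the explicit isomorphism $\i = \bigl[\begin{smallmatrix}\i_1\\ \i_2\end{smallmatrix}\bigr]$ of Theorem~\ref{thm:double-maps} in precisely the way you describe, with the cyclic structure of $\tau$ gluing $c_1$ and $c_2$ into the single string $c$, and the scalar $\mu$ absorbed via the rescaling moves of Proposition~\ref{prop:graph-band-to-band}. Your observation that the two free endpoints of $\sigma$ remove any parity obstruction (in contrast to Proposition~\ref{prop:double-band-to-band}) is exactly the reason no residual sign or scalar appears in the statement.
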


\section{Mapping cones and quasi-graph maps} \label{sec:quasi-graph-maps}

Single maps and double maps that are not singleton occur in a homotopy class that is determined by a quasi-graph map $\P_\sigma \rightsquigarrow \Sigma^{-1} \P_\tau$. In this case, it is possible to read off the mapping cone of any representative of the homotopy class from the quasi-graph map. Before we state how this is done, we need the following bookkeeping definition to cover the case when a quasi-graph map is supported in precisely one cohomological degree.

\begin{definition} \label{def:qgm-compatible}
Let $\sigma$ and $\tau$ be homotopy strings or bands and suppose $\varphi \colon \Q_\sigma \rightsquigarrow \Sigma^{-1} \Q_\tau$ is a quasi-graph map supported in exactly one degree, i.e. corresponds to the following diagram
\[
\xymatrix@!R=4px{
\ar@{~}[r]^-{\beta}  & \xydot                         & \ar@{-}[l]_-{\sigma_L} \xydot \ar@{=}[d] \ar@{-}[r]^-{\sigma_R} & \xydot \ar@{~}[r]^-{\alpha}                             &  \\
\ar@{~}[r]_-{\delta} & \xydot \ar@{-}[r]_-{\tau_L} & \xydot                                                                           & \ar@{-}[l]^-{\tau_R} \xydot \ar@{~}[r]_-{\gamma} & 
}
\]
We say that the homotopy strings or bands $\sigma$ and $\tau$ are \emph{compatibly oriented on the left} if
\begin{itemize}
\item[(1)] If $\sigma_L$ is direct then $\tau_L$ is either empty, inverse with $\sigma_L \tau_L \neq 0$ or direct with $\tau_L = \sigma'_L \sigma_L$ for some nontrivial $\sigma'_L$. 
\item[(2)] If $\sigma_L$ is inverse then $\tau_L$ is inverse and $\sigma_L = \tau_L \tau'_L$ for some nontrivial $\tau'_L$. 
\item[(3)] If $\sigma_L = \emptyset$ then $\tau_L$ is inverse. 
\end{itemize}
Similarly, we say that the homotopy strings or bands $\sigma$ and $\tau$ are \emph{compatibly oriented on the right} if the following dual conditions hold: 
\begin{itemize}
\item[(1)] If $\sigma_R$ is inverse then $\tau_R$ is either empty, direct with $\sigma_R \tau_R \neq 0$ or inverse with $\tau_R = \sigma'_R \sigma_R$ for some nontrivial $\sigma'_R$. 
\item[(2)] If $\sigma_R$ is direct then $\tau_R$ is direct and $\sigma_R = \tau_R \tau'_R$ for some nontrivial $\tau'_R$. 
\item[(3)] If $\sigma_R =\emptyset$ then $\tau_R$ is direct. 
\end{itemize}
We say that the homotopy strings or bands $\sigma$ and $\tau$ are \emph{compatibly oriented for $\varphi$} if they are compatibly oriented on the left and on the right. 
\end{definition}

Note that when the maximal common homotopy substring $\rho$ determining a quasi-graph map $\varphi \colon \Q_\sigma \rightsquigarrow \Sigma^{-1} \Q_\tau$ is of length at least one,  the homotopy strings or bands $\sigma$ and $\tau$ are automatically compatibly oriented for $\varphi$ in an unfolded diagram of $\varphi$.

\begin{proposition}\label{prop:quasi-graph-map}
Let $\sigma$ and $\tau$ be homotopy strings or bands. Suppose $\varphi \colon \Q_\sigma \rightsquigarrow \Sigma^{-1} \Q_\tau$ is a quasi-graph map determined by a maximal common homotopy substring $\rho$, i.e. $\sigma = \beta \sigma_L \rho \sigma_R \alpha$ and $\tau = \delta \tau_L \rho \tau_R \gamma$. Assume further that $\sigma$ and $\tau$ are compatibly oriented for $\varphi$. Suppose $\f \colon \Q_\sigma \to \Q_\tau$ is a representative of the homotopy set determined by $\varphi$\footnote{The extension of statements $(3)$ and $(4)$ to the case that $\rho$ is longer than the homotopy band is given in \cite[Prop. 3.1]{Addendum}. The extension of statement $(2)$ to the case $\rho$ is longer than at least one of the homotopy bands together the discussion of the case that $c$ is a nontrivial power of a homotopy band is given in \cite[Thm. 3.2]{Addendum}}. 
\begin{enumerate}
\item If $\sigma$ and $\tau$ are homotopy strings then $\M_{\f}$ is isomorphic to $\P_{c_1} \oplus \P_{c_2}$, where $c_1 = \beta \sigma_L \rho \tau_R \gamma$ and $c_2 = \delta \tau_L \rho \sigma_R \alpha$.
\item If $(\sigma, \lambda)$ and $(\tau, \mu)$ are homotopy bands then $\M_{\f}$ is isomorphic to $\B_{c, -\lambda \mu^{-1}}$ , where $c = \beta \sigma_L \rho \tau_R \gamma \delta \tau_L \rho \sigma_R \alpha$ and where the scalar $-\lambda \mu^{-1}$ is placed on a direct homotopy letter of $\alpha$ or $\beta$ or an inverse homotopy letter of $\gamma$ or $\delta$.
\item If $(\sigma, \lambda)$ is a homotopy band and $\tau$ is a homotopy string then $\M_{\f}$ is isomorphic to $\P_{c}$, where $c = \delta \tau_L \rho \sigma_R \alpha \beta \sigma_L \rho \tau_R \gamma$.
\item If $\sigma$ is a homotopy string and $(\tau,\mu)$ is a homotopy band then $\M_{\f}$ is isomorphic to $\P_{c}$, where $c = \beta \sigma_L \rho \tau_R \gamma \delta \tau_L \rho \sigma_R \alpha$.
\end{enumerate}
\end{proposition}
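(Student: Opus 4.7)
The plan is to use that the mapping cone is an invariant of the homotopy class of the map. Since $\M_{\f}$ only depends on the homotopy class of $\f$ in $\KminusL$, we may replace $\f$ by any convenient representative in the homotopy class determined by $\varphi$. By \cite[Prop. 4.8]{ALP}, this homotopy class contains explicit single and/or double maps whose mapping cones have been computed in Sections~\ref{sec:single-maps} and~\ref{sec:double-maps}. The proof therefore reduces to two steps: (i) selecting a canonical representative $\f'$, and (ii) verifying that the cone of $\f'$, as given by the theorems of Sections~\ref{sec:single-maps} and~\ref{sec:double-maps}, matches the claimed $c_1, c_2$ (or $c$).

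For the first step, I would split according to the length of $\rho$ and the endpoint conditions. When $\length{\rho} \geq 1$, I would pick a representative whose component(s) are supported at the position(s) of $\rho$ adjacent to the endpoint(s) satisfying a quasi-graph condition; this is a double map when both (LQ$\ast$) and (RQ$\ast$) hold, and a single map when only one of them holds. When $\length{\rho} = 0$, Definition~\ref{def:qgm-compatible} guarantees that the endpoint data assemble into a valid single or double map. In each configuration of (LQ1)--(LQ3) paired with (RQ1)--(RQ3), the decomposition $\sigma = \beta \sigma_L \rho \sigma_R \alpha$ of the quasi-graph setup maps to a decomposition of the form required by Theorem~\ref{thm:single-maps} or Theorem~\ref{thm:double-maps}, with the "$\sigma_L, \sigma_R$" of that theorem absorbing the appropriate endpoint of $\rho$.

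For the second step, one applies Theorem~\ref{thm:single-maps} or Theorem~\ref{thm:double-maps} (in the string case) and reads off the output. The endpoint conditions (RQ$i$) force the outputs of those theorems to simplify: for instance, under (RQ1), the substring $\overline{\gamma}\,\overline{\tau_R}\,\overline{f_R}\,\sigma_R \alpha$ appearing in Theorem~\ref{thm:double-maps} collapses, after cancellation of the shared prefix of $\sigma_R$ and $\tau_R$, to precisely the right half of $c_1 = \beta \sigma_L \rho \tau_R \gamma$; similarly on the left using (LQ$i$). This is the pattern already visible in Examples (graph-map) Section~\ref{sec:graph-maps}. For the band cases (2)--(4) I would apply Propositions~\ref{prop:single-band-to-band}--\ref{prop:double-string-to-band}, tracking the scalar and sign exactly as in the proof of Proposition~\ref{prop:graph-band-to-band}: the parity of $\length{\rho}$ together with the parity contribution of the single/double representative combine to produce the scalar $-\lambda\mu^{-1}$, placed on a direct letter of $\alpha, \beta$ or an inverse letter of $\gamma, \delta$.

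The main obstacle will be the case analysis. Each of the nine combinations of left/right endpoint conditions, together with the four choices of whether $\sigma$ and $\tau$ are strings or bands, must be verified to produce consistent output under the theorems invoked. This is chiefly a bookkeeping exercise, but some care is needed in the boundary cases where $\sigma_L, \sigma_R, \tau_L, \tau_R$ are empty, and in ensuring the compatible orientation assumption of Definition~\ref{def:qgm-compatible} is actually sufficient (and necessary) in the degenerate case $\length{\rho} = 0$. The band-to-band parity argument is delicate but parallels the one already given in Proposition~\ref{prop:graph-band-to-band}, so no new conceptual input is required.
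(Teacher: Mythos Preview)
Your overall strategy matches the paper's: reduce to a convenient representative in the homotopy class and invoke the earlier cone computations. The double-map branch is fine, since Theorem~\ref{thm:double-maps} is stated for arbitrary double maps.

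The gap is in the single-map branch. Theorem~\ref{thm:single-maps} is stated only for \emph{singleton} single maps, and by definition the single maps occurring in the homotopy class of a quasi-graph map are exactly the non-singleton ones. So you cannot simply ``apply Theorem~\ref{thm:single-maps}'' to such a representative; the hypotheses are not met. Concretely, in the compatible orientation of Definition~\ref{def:single-compatible} the non-singleton case forces one of $f_L, f_R$ to be trivial, and then the isomorphism $\i_2$ from Figure~\ref{c_2} (the one sending $P(y_{j-1})$ to $P(x_i)$ via $-f_L$) is no longer well defined when $\sigma_L$ is inverse: the commutativity check ``$f_L\overline{\sigma_{i+1}}=0$'' in the proof of Theorem~\ref{thm:single-maps} used gentleness together with the nontriviality of both $f_L$ and $f_R$, and that argument breaks. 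The paper resolves this not by bookkeeping but by constructing a \emph{new} isomorphism in this case (routing through the identification $\sigma_L=\overline{g'}\overline{g}$, $\tau_L=\overline{g}$ coming from endpoint condition (LQ1)), and then iterating this modification with alternating signs along $\rho$ when $\length{\rho}>0$.

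Note also that you cannot sidestep the issue by always choosing a double-map representative: by \cite[Prop.~4.8]{ALP} the homotopy class contains no double map precisely when neither (LQ3) nor (RQ3) holds, and that case is nonempty. So your plan needs one genuinely new ingredient, namely the modified isomorphisms replacing $\i_1,\i_2$ for non-singleton single maps; the rest of your outline is correct.
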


\begin{proof}
For ease of notation, we assume $\sigma$ and $\tau$ are homotopy strings. Let $\varphi \colon \P_\sigma \rightsquigarrow \Sigma^{-1} \P_\tau$ be a quasi-graph map determined by a maximal common homotopy string $\rho$. 
We note that by orienting the unfolded diagrams so that they are compatibly oriented for $\varphi$ we have interchanged the roles of $\gamma$ and $\delta$ and $\tau_L$ and $\tau_R$ vis-\`a-vis the statements in Sections~\ref{sec:single-maps} and \ref{sec:double-maps}.
Firstly, if the homotopy class determined by $\varphi$ contains a double map, then we simply compute the mapping cone of such a double map as in Theorem~\ref{thm:double-maps} and Propositions~\ref{prop:double-band-to-band}, \ref{prop:double-band-to-string} and \ref{prop:double-string-to-band}. So we may assume that the homotopy class of $\varphi$ does not contain a double map. By \cite[Prop. 4.8]{ALP} this occurs if and only if quasi-graph map endpoint conditions (RQ3) and (LQ3) do not occur.

We first assume that $\rho$ is a trivial homotopy string and choose a single map representative of the homotopy class determined by $\varphi$. Since $\rho$ is trivial, this corresponds to the situation in Theorem~\ref{thm:single-maps} where precisely one of $f_L$ or $f_R$ is trivial. Assume $f_R$ is nontrivial and $f_L$ is trivial.
The left-hand diagram below indicates this situation, where the notation has been updated to reflect the set up of the quasi-graph map in Definition~\ref{def:qgm-compatible}, indicated on the right-hand side.
\[
\xymatrix@!R=4px{
                     & \ar@{~}[r]^-{\beta}    & \xydot \arr^-{\sigma_L}       & \xydot \ar[d]^-{f} \ar[r]^-{\sigma_R = f f_R} \ar@{==}[dl] & \xydot \ar@{~}[r]^-{\alpha}                &   \\
\ar@{~}[r]_-{\delta} & \xydot \arr_-{\tau_L}  & \xydot \ar[r]_-{\tau_R = f}  &  \xydot \ar@{~}[r]_-{\gamma}                               &                                            & 
}
\quad
\xymatrix@!R=4px{
\ar@{~}[r]^-{\beta}  & \xydot \arr^-{\sigma_L}     & \xydot \ar@{=}[d] \ar[r]^-{\sigma_R = f f_R} & \xydot \ar@{~}[r]^-{\alpha}  &  \\
\ar@{~}[r]_-{\delta} & \xydot \ar@{-}[r]_-{\tau_L} & \xydot \ar[r]_-{\tau_R = f}                  & \xydot \ar@{~}[r]_-{\gamma} & 
}
\]
Here $\sigma_L$ corresponds to the homotopy letter $\sigma_{i+1}$ and $\tau_L$ corresponds to $\overline{\tau_{j-1}}$ in the proof of Theorem~\ref{thm:single-maps}. Since $f_R$ is nontrivial, the map $\i_1 \colon \P_{c_1} \to \M_{\f}$ is defined as in Figure~\ref{c_1} in the proof of Theorem~\ref{thm:single-maps}.
However, the map $\i_2 \colon \P_{c_2} \to \M_{\f}$ defined in Figure~\ref{c_2} is only well defined in the case that $\sigma_L$ is direct. In the case that $\sigma_L$ is inverse, we need to modify the definition of $\i_2 \colon \P_{c_2} \to \M_{\f}$ as follows. First note that this means that $\varphi$ satisfies the left endpoint condition (LQ1), meaning that $\sigma_L = \overline{g'}\overline{g}$ and $\tau_L = \overline{g}$ for some nontrivial paths $g$ and $g'$ with $gg' \neq 0$. Then $\i_2 \colon \P_{c_2} \to \M_{\f}$ can be modified as in the diagram below, with the remaining components defined as in Figure~\ref{c_2}.
\begin{figure}[H]
\begin{tikzcd}[row sep=scriptsize,column sep=scriptsize,nodes={scale=.72}]
&&P(x_{i-1})\arrow[dd,color=blue,pos=.5,"+1"]
\\
P(y_{j-2}) \arrow[dd,color=blue,crossing over,pos=.6,bend right=45,swap,"+1"]  \arrow[d,color=red,pos=.5,swap,"\overline{g'}"] & \arrow[l,swap,"\overline{g}"] \arrow[dd,color=blue,crossing over,pos=.7,bend right=35,swap,"+1"]  P(y_{j-1}) \arrow[d,color=red,pos=.5,"-1"]\arrow[ru,"ff_R"]&
\\
P(x_{i+1}) & \arrow[l,swap,"-\overline{g'}\overline{g}"] P(x_i) \arrow[r,"-ff_R"] \arrow[rd,"f"] & P(x_{i-1})
\\
P(y_{j-2}) & \arrow[l,swap,"\overline{g}"] P(y_{j-1}) \arrow[r,"-f"]&P(y_j)
\end{tikzcd}
\end{figure}

\noindent
The argument when $f_R$ is trivial and $f_L$ is nontrivial is dual to the argument above.

Finally, in the case that both $f_L$ and $f_R$ are trivial, then $\length{\rho} > 0$. In this case, one modifies $\i_1$ above using identity morphisms with alternating signs along the top string until one reaches a direct homotopy letter or an endpoint as indicated in the second diagram above. Similarly for $\i_2$, where one proceeds until one reaches an inverse homotopy letter or an endpoint as the first diagram above. When one of $\sigma$ or $\tau$ is a homotopy band, one deals with the signs exactly as in Propositions~\ref{prop:single-band-to-band}, \ref{prop:single-band-to-string} and \ref{prop:single-string-to-band}.  
\end{proof}

\section{Examples}

In this section we will illustrate the graphical mapping cone calculus  in $\Db(\Lambda)$ developed in Sections~\ref{sec:graph-maps}-\ref{sec:quasi-graph-maps} on some concrete examples. In the first example, we consider maps involving only string complexes and in the second example we  consider band complexes.

\begin{example} Let $\Lambda$  be the gentle algebra given by the following quiver and relations: 
\[
\begin{tikzpicture}[scale=1.2]
\node (A) at (0,0) {$0$};
\node (B1) at (-1,.75) {$1$};
\node (B2) at (-1,-.75) {$2$};
\node (C1) at (1,.75) {$3$};
\node (C2) at (1,-.75) {$4$};
\path[->,font=\scriptsize,>=angle 90]
(A) edge node[above]{$a$} (B1)
(B1) edge node[left]{$b$} (B2)
(B2) edge node[above]{$c$} (A)
(C1) edge node[above]{$f$} (A)
(A) edge node[above]{$d$} (C2)
(C2) edge node[right]{$e$} (C1);
\draw[thick,dotted] (-.3,.15 ) arc (150:220:.3cm)
(.3,.15) arc (30:-35:.3cm)
(-.7,.5) arc (-35:-90:.3cm)
(-.7,-.4) arc (35:90:.3cm)
(.95,.4) arc (-90:-150:.3cm)
(.95,-.4) arc (90:150:.3cm);
\end{tikzpicture}
\]
 
(1) 
Consider the homotopy strings  $\sigma=edcba\bar d$ and $\tau=\bar e \bar f c b af e,$ and the graph map $\f \colon \P_\sigma \to \P_\tau$ given by
\[
\scalebox{.9}{\begin{tikzpicture}[scale=1.5]
\node (A1) at (0,0) {};
\node (A2) at (1.5,0) {$P(3)$};
\node (A3) at (3,0) {$P(4)$};
\node (A4) at (4.5,0) {$P(2)$};
\node (A5) at (6,0) {$P(1)$};
\node (A6) at (7.5,0) {$P(0)$};
\node (A7) at (9,0) {$P(4)$};
\node (A8) at (-1,0) {$P_\sigma$};
\node (B1) at (0,-1) {$P(4)$};
\node (B2) at (1.5,-1) {$P(3)$};
\node (B3) at (3,-1) {$P(0)$};
\node (B4) at (4.5,-1) {$P(2)$};
\node (B5) at (6,-1) {$P(1)$};
\node (B6) at (7.5,-1) {$P(3)$};
\node (B7) at (9,-1) {$P(4)$};
\node (B8) at (-1,-1) {$P_\tau$};
\path[->,font=\scriptsize,>=angle 90]
(A2) edge node[above]{$e$} (A3)
(A3) edge node[above]{$dc$} (A4)
(A4) edge node[above]{$b$} (A5)
(A5) edge node[above]{$a$} (A6)
(A7) edge node[above]{$d$} (A6)
(A8) edge (B8);
\path[->,font=\scriptsize,>=angle 90]
(A3) edge node[right]{$d$} (B3)
(A6) edge node[right]{$f$} (B6);
\path[font=\scriptsize,>=angle 90]
(A4)  edge[double]  (B4)
(A5)  edge[double]  (B5);
\path[->,font=\scriptsize,>=angle 90]
(B2) edge node[above]{$e$} (B1)
(B3) edge node[above]{$f$} (B2)
(B3) edge node[above]{$c$} (B4)
(B4) edge node[above]{$b$} (B5)
(B5) edge node[above]{$af$} (B6)
(B6) edge node[above]{$e$} (B7);
\end{tikzpicture}}
\]
By Theorem~\ref{thm:graph-maps},  the mapping cone $\M_{\f}$ is isomorphic to $\P_{c_1} \oplus \P_{c_2}$ where $c_1=d \bar a  a f e= dfe$  and $c_2  = ed c \bar c fe = edfe$  (cf. green and red boxes in the figure below).
\[
\scalebox{.9}{\begin{tikzpicture}[scale=1.5]
\node (A1) at (0,0) {};
\node (A2) at (1.5,0) {$P(3)$};
\node (A3) at (3,0) {$P(4)$};
\node (A4) at (4.5,0) {$P(2)$};
\node (A5) at (6,0) {$P(1)$};
\node (A6) at (7.5,0) {$P(0)$};
\node (A7) at (9,0) {$P(4)$};
\node (B1) at (0,-1) {$P(4)$};
\node (B2) at (1.5,-1) {$P(3)$};
\node (B3) at (3,-1) {$P(0)$};
\node (B4) at (4.5,-1) {$P(2)$};
\node (B5) at (6,-1) {$P(1)$};
\node (B6) at (7.5,-1) {$P(3)$};
\node (B7) at (9,-1) {$P(4)$};
\path[->,font=\scriptsize ,>=angle 90]
(A2) edge node[above]{$e$} (A3)
(A3) edge node[above]{$dc$} (A4)
(A4) edge node[above]{$b$} (A5)
(A5) edge node[above]{$a$} (A6)
(A7) edge node[above]{$d$} (A6);
\path[->,font=\scriptsize ,>=angle 90]
(A3) edge node[right]{$d$} (B3)
(A6) edge node[right]{$f$} (B6);
\path[font=\scriptsize ,>=angle 90]
(A4)  edge[double]  (B4)
(A5)  edge[double]  (B5);
\path[->,font=\scriptsize ,>=angle 90]
(B2) edge node[above]{$e$} (B1)
(B3) edge node[above]{$f$} (B2)
(B3) edge node[above]{$c$} (B4)
(B4) edge node[above]{$b$} (B5)
(B5) edge node[above]{$af$} (B6)
(B6) edge node[above]{$e$} (B7);
\draw [color=green, dashed, line width=1.1] (1.1,.2)--(4.8,.2)--(4.8,-1.2)--(-.4, -1.2)--(-.4,-.8)--(4.2,-.8)--(4.2,-.2)--(1.1,-.2)--(1.1,.2);
\draw [color=red, dotted, line width=1.2] (5.7,.2)--(9.4,.2)--(9.4,-.2)--(6.25,-.2)--(6.25,-.8)--(9.4,-.8)--(9.4,-1.2)--(5.7,-1.2)--(5.7,.2);
\end{tikzpicture}}
\]

(2) 
Non-singleton double maps and single maps arise in the context of quasi-graph maps. As described in Section~\ref{sec:quasi-graph}, a given quasi-graph map gives rise to a class of (single and double) maps, which are all homotopy equivalent to each other. In particular, they all have the same mapping cone, which is the `mapping cone of the quasi-graph map'. We will now illustrate this with an example. Consider a quasi-graph map $\phi \colon \P_\sigma  \rightsquigarrow \Sigma^{-1} \P_\tau$, for homotopy strings  $\sigma=bacb$ and $\tau=\bar f cba$, given by
\[
\scalebox{.9}{\begin{tikzpicture}[scale=1.5]
\node (A1) at (0,0) {P(2)};
\node (A2) at (1.5,0) {$P(1)$};
\node (A3) at (3,0) {$P(0)$};
\node (A4) at (4.5,0) {$P(2)$};
\node (A5) at (6,0) {$P(1)$};
\node (A6) at (-1,0) {$P_\sigma$};
\node (B1) at (1.5,-1) {$P(3)$};
\node (B2) at (3,-1) {$P(0)$};
\node (B3) at (4.5,-1) {$P(2)$};
\node (B4) at (6,-1) {$P(1)$};
\node (B5) at (7.5,-1) {$P(0)$};
\node (B6) at (-1,-1) {$\Sigma^{-1} P_\tau$};
\path[->,font=\scriptsize,>=angle 90]
(A1) edge node[above]{$b$} (A2)
(A2) edge node[above]{$a$} (A3)
(A3) edge node[above]{$c$} (A4)
(A4) edge node[above]{$b$} (A5);
\draw [->,
line join=round,
decorate, decoration={
    zigzag,
    segment length=4,
    amplitude=.9,post=lineto,
    post length=2pt
}] (A6) -- (B6);
\path[->,font=\scriptsize,>=angle 90]
(B2) edge node[above]{$f$} (B1)
(B2) edge node[above]{$c$} (B3)
(B3) edge node[above]{$b$} (B4)
(B4) edge node[above]{$a$} (B5);
\path[font=\scriptsize,>=angle 90]
(A3)  edge[double]  (B2)
(A4)  edge[double]  (B3)
(A5)  edge[double]  (B4);
\end{tikzpicture}
}
\]
By Proposition~\ref{prop:quasi-graph-map}(1), the mapping cone of any single or double map $\f \colon \P_\sigma \to \P_\tau$ in the homotopy set determined by $\phi$ is isomorphic to $\P_{c_1} \oplus \P_{c_2}$, where $c_1=bacba$ and $c_2=\bar f cb$ (cf. green and red boxes in the figure below).
\[
\scalebox{.9}{\begin{tikzpicture}[scale=1.5]
\node (A1) at (0,0) {P(2)};
\node (A2) at (1.5,0) {$P(1)$};
\node (A3) at (3,0) {$P(0)$};
\node (A4) at (4.5,0) {$P(2)$};
\node (A5) at (6,0) {$P(1)$};
\node (B1) at (1.5,-1) {$P(3)$};
\node (B2) at (3,-1) {$P(0)$};
\node (B3) at (4.5,-1) {$P(2)$};
\node (B4) at (6,-1) {$P(1)$};
\node (B5) at (7.5,-1) {$P(0)$};
\path[->,font=\scriptsize ,>=angle 90]
(A1) edge node[above]{$b$} (A2)
(A2) edge node[above]{$a$} (A3)
(A3) edge node[above]{$c$} (A4)
(A4) edge node[above]{$b$} (A5);
\path[->,font=\scriptsize ,>=angle 90]
(B2) edge node[above]{$f$} (B1)
(B2) edge node[above]{$c$} (B3)
(B3) edge node[above]{$b$} (B4)
(B4) edge node[above]{$a$} (B5);
\path[font=\scriptsize ,>=angle 90]
(A3)  edge[double]  (B2)
(A4)  edge[double]  (B3)
(A5)  edge[double]  (B4);
\draw [color=green, dashed, line width=1.1] (-.4,.2)--(6.4,.2)--(6.4,-.8)--(7.9, -.8)--(7.9,-1.2)--(5.7,-1.2)--(5.7,-.2)--(-.4,-.2)--(-.4,.2);
\draw [color=red, dotted, line width=1.2] (1.1,-.75)--(6.35,-.75)--(6.35, -1.25)--(1.1,-1.25)--(1.1,-.75);
\end{tikzpicture}
}
\]
We now consider a single map $\f \colon \P_\sigma \to \P_\tau$ and a double map $\g \colon \P_\sigma \to \P_\tau$ in the homotopy set determined by the quasi-graph map $\phi$.
\begin{enumerate}[label=(\roman*)]
\item Let $\f: \P_\sigma \to \P_\tau$ be a single map in the homotopy set determined by $\phi$ given by
\[
\scalebox{.9}{\begin{tikzpicture}[scale=1.5]
\node (A1) at (0,0) {P(2)};
\node (A2) at (1.5,0) {$P(1)$};
\node (A3) at (3,0) {$P(0)$};
\node (A4) at (4.5,0) {$P(2)$};
\node (A5) at (6,0) {$P(1)$};
\node (A6) at (-1,0) {$P_\sigma$};
\node (B1) at (0,-1) {$P(0)$};
\node (B2) at (1.5,-1) {$P(1)$};
\node (B3) at (3,-1) {$P(2)$};
\node (B4) at (4.5,-1) {$P(0)$};
\node (B5) at (6,-1) {$P(3)$};
\node (B6) at (-1,-1) {$P_\tau$};
\path[->,font=\scriptsize,>=angle 90]
(A1) edge node[above]{$b$} (A2)
(A2) edge node[above]{$a$} (A3)
(A3) edge node[above]{$c$} (A4)
(A4) edge node[above]{$b$} (A5)
(A6) edge (B6);
\path[->,font=\scriptsize,>=angle 90]
(A3) edge node[right]{$c$} (B3);
\draw[thick,dotted] (2.7,0) arc (180:260:.35cm);
\draw[thick,dotted] (3,-.7) arc (90:175:.35cm);
\path[->,font=\scriptsize,>=angle 90]
(B2) edge node[above]{$a$} (B1)
(B3) edge node[above]{$b$} (B2)
(B4) edge node[above]{$c$} (B3)
(B4) edge node[above]{$f$} (B5);
\end{tikzpicture}
}
\]
where we have drawn the unfolded diagram so that it is compatibly oriented (see Definition~\ref{def:single-compatible}).
By Theorem~\ref{thm:single-maps}, the mapping cone is $\M_{\f}\cong\P_{c_1} \oplus \P_{c_2}$, where $c_1 = bacba$ and $c_2=\bar{f}cb$ (cf. green and red boxes in the figure below). 
\[
\scalebox{.9}{\begin{tikzpicture}[scale=1.5]
\node (A1) at (0,0) {$P(2)$};
\node (A2) at (1.5,0) {$P(1)$};
\node (A3) at (3,0) {$P(0)$};
\node (A4) at (4.5,0) {$P(2)$};
\node (A5) at (6,0) {$P(1)$};
\node (B1) at (0,-1) {$P(0)$};
\node (B2) at (1.5,-1) {$P(1)$};
\node (B3) at (3,-1) {$P(2)$};
\node (B4) at (4.5,-1) {$P(0)$};
\node (B5) at (6,-1) {$P(3)$};
\path[->,font=\scriptsize ,>=angle 90]
(A1) edge node[above]{$b$} (A2)
(A2) edge node[above]{$a$} (A3)
(A3) edge node[above]{$c$} (A4)
(A4) edge node[above]{$b$} (A5);
\path[->,font=\scriptsize ,>=angle 90]
(A3) edge node[right]{$c$} (B3);
\draw [thick,dotted] (2.7,0) arc (180:260:.35cm);
\draw [thick,dotted] (3,-.7) arc (90:175:.35cm);
\path[->,font=\scriptsize ,>=angle 90]
(B2) edge node[above]{$a$} (B1)
(B3) edge node[above]{$b$} (B2)
(B4) edge node[above]{$c$} (B3)
(B4) edge node[above]{$f$} (B5);
\draw [color=green, dashed, line width=1.2] (-.4,.2)--(3.3,.2)--(3.3,-1.2)--(-.4, -1.2)--(-.4,-.8)--(2.7,-.8)--(2.7,-.2)--(-.4,-.2)--(-.4,.2);
\draw [color=red, dotted, line width=1.2] (4.2,.2)--(6.4,.2)--(6.4,-.2)--(4.8, -.2)--(4.8,-.8)--(6.4,-.8)--(6.4,-1.2)--(4.2,-1.2)--(4.2,.2);
\path[->,font=\scriptsize ,>=angle 90] (B4) edge node[right]{$c$} (A4);
\end{tikzpicture}}
\]
\item Let $\g \colon \P_\sigma \to \P_\tau$ be a double map in the homotopy set determined by $\phi$ given by 
\[
\scalebox{.9}{\begin{tikzpicture}[scale=1.5]
\node (A1) at (0,0) {};
\node (A2) at (1.5,0) {};
\node (A3) at (3,0) {$P(2)$};
\node (A4) at (4.5,0) {$P(1)$};
\node (A5) at (6,0) {$P(0)$};
\node (A6) at (7.5,0) {$P(2)$};
\node (A7) at (9,0) {$P(1)$};
\node (A8) at (-1,0) {$P_\sigma$};
\node (B1) at (0,-1) {$P(0)$};
\node (B2) at (1.5,-1) {$P(1)$};
\node (B3) at (3,-1) {$P(2)$};
\node (B4) at (4.5,-1) {$P(0)$};
\node (B5) at (6,-1) {$P(3)$};
\node (B6) at (7.5,0) {};
\node (B7) at (9,0) {};
\node (B8) at (-1,-1) {$P_\tau$};
\path[->,font=\scriptsize,>=angle 90]
(A3) edge node[above]{$b$} (A4)
(A4) edge node[above]{$a$} (A5)
(A5) edge node[above]{$c$} (A6)
(A6) edge node[above]{$b$} (A7)
(A8) edge (B8);
\path[->,font=\scriptsize,>=angle 90]
(A4) edge node[right]{$a$} (B4)
(A5) edge node[right]{$f$} (B5);
\path[->,font=\scriptsize,>=angle 90]
(B2) edge node[above]{$a$} (B1)
(B3) edge node[above]{$b$} (B2)
(B4) edge node[above]{$c$} (B3)
(B4) edge node[above]{$f$} (B5);
\end{tikzpicture}}
\]
By Theorem~\ref{thm:double-maps}, its mapping cone is $\M_{\g}\cong\P_{c_1} \oplus \P_{c_2}$, where $c_1 =\bar f cb$ and $c_2=bacba$ (cf. red and green boxes in the figure below). 
\[
\scalebox{.9}{\begin{tikzpicture}[scale=1.4]
\node (A1) at (0,0) {};
\node (A2) at (1.5,0) {};
\node (A3) at (3,0) {$P(2)$};
\node (A4) at (4.5,0) {$P(1)$};
\node (A5) at (6,0) {$P(0)$};
\node (A6) at (7.5,0) {$P(2)$};
\node (A7) at (9,0) {$P(1)$};
\node (B1) at (0,-1) {$P(0)$};
\node (B2) at (1.5,-1) {$P(1)$};
\node (B3) at (3,-1) {$P(2)$};
\node (B4) at (4.5,-1) {$P(0)$};
\node (B5) at (6,-1) {$P(3)$};
\node (B6) at (7.5,0) {};
\node (B7) at (9,0) {};
\path[->,font=\scriptsize ,>=angle 90]
(A3) edge node[above]{$b$} (A4)
(A4) edge node[above]{$a$} (A5)
(A5) edge node[above]{$c$} (A6)
(A6) edge node[above]{$b$} (A7);
\path[->,font=\scriptsize ,>=angle 90]
(A4) edge node[right]{$a$} (B4)
(A5) edge node[right]{$f$} (B5);
\path[->,font=\scriptsize ,>=angle 90]
(B2) edge node[above]{$a$} (B1)
(B3) edge node[above]{$b$} (B2)
(B4) edge node[above]{$c$} (B3)
(B4) edge node[above]{$f$} (B5);
\draw [color=green, dashed, line width=1.1] (2.7,.3)--(4.8,.3)--(4.8,-1.3)--(-.4, -1.3)--(-.4,-.8)--(4.2,-.8)--(4.2,-.2)--(2.7,-.2)--(2.7,.3);
\draw [color=red, dotted, line width=1.2] (5.7,.3)--(5.7,-1.3)--(6.3,-1.3)--(6.3,-.2)--(9.4,-.2)--(9.4,.3)--(5.7,.3);
\end{tikzpicture}}
\]
\end{enumerate}
\end{example}

We finally give an example involving band complexes.

\begin{example}
Let $\Lambda$ be the algebra given by the quiver with relations in Example~\ref{instructive-example}.
Let $\sigma= gh \bar d \bar f$ and  $\tau=b\bar e \bar d c$ be homotopy bands with corresponding scalars $\lambda$ and $\mu$ respectively. By Proposition~\ref{prop:graph-band-to-band}, the mapping cone for the graph map $\f \colon \B_{\sigma,\lambda} \to \B_{\tau,\mu} $ given below is $\M_{\f}\cong\B_{c,\lambda\mu^{-1}}$, where $ c=  gh e \bar b\bar c   d \bar d \bar f = ghe\bar b\bar c \bar f$. Graphically this corresponds to the following diagrams of unfolded complexes. 
\[
\scalebox{.9}{\begin{tikzpicture}[scale=1.5]
\node (A1) at (.4,0) {};
\node (A2) at (1.5,0) {$P(6)$};
\node (A3) at (3,0) {$P(5)$};
\node (A4) at (4.5,0) {$P(6)$};
\node (A5) at (5.5,0) {};
\node (B0) at (-1,-1) {};
\node (B1) at (0,-1) {$P(3)$};
\node (B2) at (1.5,-1) {$P(2)$};
\node (B3) at (3,-1) {$P(5)$};
\node (B4) at (4.5,-1) {$P(4)$};
\node (B5) at (6,-1) {$P(3)$};
\node (B6) at (7,-1) {};
\path[->,font=\scriptsize ,>=angle 90]
(A2) edge node[above]{$\lambda gh$} (A3)
(A4) edge node[above]{$d f$} (A3);
\draw [densely dotted] (A1) --(A2) (A4) --(A5)
(B0) --(B1) (B5) --(B6);
\draw (A3) edge[double] (B3);
\path[->,font=\scriptsize ,>=angle 90]
(A4) edge node[right]{$f$} (B4);
\path[->,font=\scriptsize ,>=angle 90]
(B1) edge node[above]{$b$} (B2)
(B3) edge node[above]{$e$} (B2)
(B4) edge node[above]{$d$} (B3)
(B4) edge node[above]{$\mu c$} (B5);
\path[->,font=\scriptsize ,>=angle 90,color=red]
(A2) edge node[left]{$\lambda ghe$} (B2);
\draw [color=red, dotted, line width=1.2] (.3,.3)--(1.8,.3)--(1.8,-1.3)
(.3,-.2)--(1,-.2)--(1,-.75)--(-1,-.75)
(-1,-1.3)--(1.8,-1.3)
(7,-.75)--(4.8,-.75)--(4.8,-.2)--(5.5,-.2)
(5.5,.3)--(4.2,.3)--(4.2,-1.3)--(7,-1.3);
\end{tikzpicture}}
\]

\end{example}

\bigskip


\end{document}